\definecolor{forestgreen}{rgb}{0.0, 0.27, 0.13}
\theoremstyle{plain}%
\newtheorem{main}{Theorem}%
\newtheorem{theorem}{Theorem}[section]
\newtheorem{lemma}[theorem]{Lemma}
\newtheorem{corollary}[theorem]{Corollary}
\newtheorem{proposition}[theorem]{Proposition}
\newtheorem{definition}[theorem]{Definition}
\newtheorem{question}{Question}
\newtheorem{remark}[theorem]{Remark}
\def\SP{\mathcal{SP}^r_3(M)}
\def\DA{\mathcal{DA}^r_2(\mathbb{T}^4)}
\def\bN{\mathbb{N}}
\def\bZ{\mathbb{Z}}
\def \cF {{\mathcal F}}
\def \cM {{\mathcal M}}
\def \cO {{\mathcal O}}
\def \cR {{\mathcal R}}
\def \cU {{\mathcal U}}
\def \cV {{\mathcal V}}
\def\Diff{{\rm Diff}}
\def\dim{{\rm dim}}
\def\Per{{\rm Per}}
\DeclareMathOperator{\dime}{dim}
\numberwithin{equation}{section}
\DeclareMathSymbol{\varnothing}{\mathord}{AMSb}{"3F}
\title[PH with a finite number of MME]{Partially hyperbolic diffeomorphisims with a finite number of measures of maximal entropy}
\author[J. Mongez]{Juan Carlos Mongez}
\address{Departamento de Matem\'atica, Universidade Federal do Cear\'a (UFC), Campus do Pici,
Bloco 914, CEP 60455-760. Fortaleza -- CE, Brasil}
\email{juanmongez1994@gmail.com}
\author[M. Pacifico]{Maria Jose Pacifico}
\address{Instituto de Matem\'atica, Universidade Federal do Rio de Janeiro, Cidade Universit\'aria - Ilha do Fund\~ao, Rio de Janeiro 21945-909,  Brazil}
\email{pacifico@im.ufrj.br}
\author[M. Poletti]{Mauricio Poletti}
\address{Departamento de Matem\'atica, Universidade Federal do Cear\'a (UFC), Campus do Pici,
Bloco 914, CEP 60455-760. Fortaleza -- CE, Brasil}
\email{mpoletti@mat.ufc.br}
\begin{document}

\begin{abstract}
We prove the finiteness of ergodic measures of maximal entropy for partially hyperbolic diffeomorphisms where the center direction has a dominated decomposition into one dimensional bundle and there is a uniform lower bound for the absolute value of the Lyapunov exponents. As applications we prove finiteness for a class derived from Anosov partially hyperbolic diffeomorphisms defined on $\mathbb{T}^4$ and that in a class of skew product over partially hyperbolic diffeomorphisms there exists a $C^1$ open and $C^r$ dense set of diffeomorphisms with a finite number of ergodic measures of maximal entropy. We also study the upper semicontinuity of the number of measures of maximal entropy with respect to the diffeomorphism.
\end{abstract}

\thanks{The authors were partially supported by CAPES-Finance Code 001. 
JCM was partially supported by Instituto Serrapilheira, grant ``Jangada Din\^{a}mica: Impulsionando Sistemas Din\^{a}micos na Regi\~{a}o Nordeste and CNPq-Brazil-Bolsa de Pós-doutorado Júnior No. 175065/2023-3.
MJP was partially supported by FAPERJ Grant CNE No. E-26/202.850/2018(239069), Grant CNPq-Brazil No. 307776/2019-0 and CNPq-Projeto Universal No. 404943/2023-3.
MP was supported by Instituto Serrapilheira, grant number Serra-R-2211-41879 and FUNCAP, grant AJC 06/2022.}

\maketitle
\tableofcontents

\section{Introduction}


Sinai, Ruelle, and Bowen pioneered the study of measures of maximal entropy in the 1970s. Sinai made a remarkable advance by proving the existence and uniqueness of measures of maximal entropy for Anosov diffeomorphisms \cite{sin72}. 
Later, Ruelle and Bowen expanded this framework to include uniformly hyperbolic systems (Axiom A), further advancing the theory \cite{Bow71, Rue68, Rue78}.

Since then, the study of the existence and finiteness of measures of maximal entropy has remained an active area of research. In a celebrated work, using Yomdin theory~\cite{Yom87}, Newhouse proved that $C^\infty$ diffeomorphisms on a compact manifold always admit measures of maximal entropy \cite{New89}, for finite regularity this was extended by Burguet~\cite{burguet2012symbolic} for diffeomorphisms with large topological entropy. Recently, Buzzi, Crovisier, and Sarig established that, on surfaces, $C^\infty$ diffeomorphisms with positive topological entropy have a finite number of measures of maximal entropy \cite{BCS22}

The last result does not extend to higher dimensions. For instance, consider the product of an Anosov diffeomorphism and the identity; such examples fail to replicate these properties. Additionally,  measures of maximal entropy cannot be guaranteed for systems with lower differentiability \cite{Mis73,B14}.

Once it is established that uniform hyperbolic diffeomorphisms have a finite number of measures of maximal entropy, it is natural to investigate if partially hyperbolic diffeomorphisms (see Definition \ref{Def-PHD}) have the same property.

In this paper, we study the number of ergodic measures of maximal entropy of partially hyperbolic diffeomorphisms that exhibit a dominated splitting (see definition \ref{def-DS}).

Let $f: M \to M$ be a continuous map on a compact metric space $M$. Denoting by $\mathcal{M}_f$ the set of $f$-invariant probability measures, the variational principle establishes the following relationship:  
\begin{equation}\label{e-variacional-entropia}
h(f) = \sup\{h_\mu(f) : \mu \in \mathcal{M}_f\},
\end{equation}
where $h(f)$ is the topological entropy of $f$, and $h_\mu(f)$ is the metric entropy of $f$ with respect to the invariant measure $\mu$.

\begin{definition}\label{mme}
An ergodic measure of maximal entropy (MME for short) is an ergodic measure
 $\mu \in \cM_f$  that $h_\mu(f)$ achieves the supremum in equation (\ref{e-variacional-entropia}).
\end{definition}

\subsection*{Dominated Splittings and Partially Hyperbolic Diffeomorphisms}  We consider $M$ as a compact Riemannian manifold, and $f: M \to M$ as a $C^1$ diffeomorphism.

\begin{definition}\label{def-DS}
We say that $f$ has a dominated splitting $E^{cs} \oplus E^{cu}$ when, for some appropriated Riemannian metric, it holds:

\begin{enumerate}
    \item[(i)] the bundles $E^{cs}$ and $E^{cu}$ are both $Df$-invariant,
    \item[(ii)] the bundles $E^{cs}(x)$ and $E^{cu}(x)$ have dimensions independent of $x \in M$,
    \item[(iii)] there exist $C > 0$ and $0 < \lambda < 1$ such that:
    $$ \| Df^n|_{E^{cs}(x)} \| \cdot \| Df^{-n}|_{E^{cu}(f^n(x))} \| \leq \lambda^n, $$
    for all $x \in M$ and $n \geq 0$.
\end{enumerate}

\end{definition}

In general, a $Df$-invariant splitting of the form:  $T M = E_1 \oplus \cdots \oplus E_k $
is dominated if for all $i = 1, \ldots, k-1$, the splitting $TM = E_1^i \oplus E_{i+1}^k$ is dominated, where $E^\ell_j = E_j \oplus \cdots \oplus E_\ell$ for $1 \leq j \leq \ell \leq k$.

\begin{definition}\label{Def-PHD}
    A diffeomorphism $f$ is  \textit{partially hyperbolic} if it admits a dominated splitting $TM = E^{ss} \oplus E^c \oplus E^{uu}$, where $E^{ss}$ is uniformly contracted by $Df$ and $E^{uu}$ is uniformly contracted by $Df^{-1}$.
\end{definition}

 The first two authors identified an open set of partially hyperbolic diffeomorphisms with a one-dimensional center bundle that admits a finite number of measures of maximal entropy \cite{MP24}. In particular, they demonstrated that if all measures of maximal entropy for these diffeomorphisms have central Lyapunov exponents uniformly bounded away from zero, that is, are $\alpha$--hyperbolic with $\alpha > 0$ (see \ref{Hyperbolicmeasure} for the definition of an $\alpha$--hyperbolic measure), then the number of such measures is finite. 

For partially hyperbolic diffeomorphisms, it is well known that stable and unstable bundles, $E^{ss}$ and $E^{uu}$, uniquely integrate into strong stable foliation $\mathcal{F}^{ss}$ and strong unstable foliation $\mathcal{F}^{uu}$, respectively. 
For each $x \in M$, we denote the strong stable leaf containing $x$ by $W_f^{ss}(x)$, and similarly, the strong unstable leaf containing $x$ by $W_f^{uu}(x)$, when there is no confusion we omit the subindex $f$.

The topological unstable entropy (entropy along the foliation $\cF^{uu}$) was introduced in \cite{HHW17, Yan21}, see section~\ref{ss.unstable-entropy} for the definition. We always have that the unstable topological entropy ($h^u_{\mathrm{top}}(f)$) is less than or equal to the topological entropy. 
Our first result proves that for certain partially hyperbolic diffeomorphisms, the strict inequality implies finiteness measures of maximal entropy. Moreover, we have upper semi-continuity on the number of measures of maximal entropy with respect to the diffeomorphism.

\begin{main}\label{theo-entropy-condition-dimension-2}
Let $f: M \to M$ be a $C^r$, $r>1$, diffeomorphism on a compact manifold $M$ admitting a dominated splitting $E^{ss} \oplus E_1 \oplus E_2 \oplus E^{uu}$, where $E^{ss}$ is uniformly contracting, $E^{uu}$ is uniformly expanding, and each $E_i$ is one-dimensional. Suppose that $h(f) > \max\{h^u(f), h^s(f)\}$.  Then, there exists a $C^1$ neighborhood $\cU$ of $f$ such that:
\begin{enumerate}
\item[(a)] every $C^{1+}$ diffeomorphism $g \in \cU$ admits only a finite number of MMEs;
\item[(b)] the number of these measures for $g$ is bounded above by the number of MMEs for~ $f$.
\end{enumerate}

\end{main}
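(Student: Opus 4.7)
The plan is to exploit the entropy gap hypothesis to force every MME to have both central Lyapunov exponents uniformly bounded away from zero, and then to invoke the finiteness scheme of \cite{MP24} (adapted to the two-dimensional dominated center) together with semi-continuity arguments to pass to a $C^1$ neighborhood.

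First, let $\mu$ be an ergodic MME of $f$ and denote its central Lyapunov exponents by $\lambda_1^c(\mu) \leq \lambda_2^c(\mu)$ along $E_1, E_2$, the ordering being imposed by the domination of $E_1$ by $E_2$. Since $E^{uu}$ is smooth and uniformly expanding, Pesin's formula for the strong unstable foliation gives $h^u_\mu(f) = \sum_{v\in E^{uu}} \lambda_v(\mu)$, and combining with the Margulis--Ruelle inequality yields
\[
h_\mu(f) \;\leq\; h^u_\mu(f) + \bigl(\lambda_1^c(\mu)\bigr)^+ + \bigl(\lambda_2^c(\mu)\bigr)^+.
\]
Set $\delta := h(f) - \max\{h^u(f), h^s(f)\} > 0$. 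Using $h_\mu(f) = h(f)$ and $h^u_\mu(f) \leq h^u(f)$, the displayed inequality together with the ordering $\lambda_1^c \leq \lambda_2^c$ forces $\lambda_2^c(\mu) \geq \delta/2$. The symmetric argument applied to $f^{-1}$, with $h^s(f)$ playing the role of $h^u(f)$, yields $-\lambda_1^c(\mu) \geq \delta/2$. Consequently, every ergodic MME of $f$ is uniformly $(\delta/2)$-hyperbolic in the center, with $\lambda_1^c(\mu) \leq -\delta/2 < 0 < \delta/2 \leq \lambda_2^c(\mu)$.

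Uniform central hyperbolicity means that $E^{ss} \oplus E_1$ acts as a non-uniformly contracting direction and $E_2 \oplus E^{uu}$ as a non-uniformly expanding one, on Pesin blocks whose sizes are uniformly bounded below. Following the scheme of \cite{MP24}, Pesin theory then associates to each ergodic MME a hyperbolic periodic orbit whose homoclinic class supports the measure; a Buzzi--Crovisier--Sarig-type argument shows that each homoclinic class carries at most one MME, while the uniform hyperbolicity bounds the total number of such classes, yielding (a) for $f$ itself. For the $C^1$ neighborhood $\cU$, the dominated splitting with one-dimensional $E_1, E_2$ persists, the maps $g \mapsto h^u(g)$ and $g \mapsto h^s(g)$ are upper semi-continuous on $\cU$ by the variational principle of \cite{Yan21}, and $g \mapsto h(g)$ is lower semi-continuous on $C^{1+}$ diffeomorphisms satisfying our domination hypothesis, a consequence of the entropy expansiveness provided by one-dimensional dominated center bundles. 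These facts propagate the gap $h(g) > \max\{h^u(g), h^s(g)\}$ to all $g \in \cU$, and the previous argument applies verbatim to give (a) uniformly on $\cU$.

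The delicate step is the upper semi-continuity bound (b). To each ergodic MME $\nu_g$ of $g \in \cU$ we attach a hyperbolic periodic orbit $P_g$ produced by the Pesin-theoretic argument above, whose homoclinic class supports $\nu_g$; since $P_g$ is hyperbolic, its continuation $P_f$ is well-defined for $g$ in a sufficiently small $C^1$ neighborhood of $f$. The plan is to show that distinct ergodic MMEs $\nu_g^1 \neq \nu_g^2$ of $g$ yield continuations $P_f^1, P_f^2$ that lie in distinct homoclinic classes of $f$, each supporting a genuine MME of $f$. The main obstacle, where all the care is needed, is ruling out that two distinct ergodic MMEs of $g$ might, as $g \to f$, be absorbed into a single homoclinic class of $f$: this is excluded by the uniform $(\delta/2)$-hyperbolicity, which provides uniform Pesin block sizes and hence a uniform positive separation between the supports of distinct ergodic MMEs, stable under $C^1$ perturbation, so collision in the limit cannot occur.
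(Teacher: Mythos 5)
Your overall outline identifies the right ingredients (an entropy-gap forces the center exponents away from zero; one MME per homoclinic class via Buzzi--Crovisier--Sarig), and this is indeed the skeleton of the paper's proof. However, there are two genuine gaps, one of which sits exactly where the real work has to happen.

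First, a smaller issue: the statement ``Pesin's formula for the strong unstable foliation gives $h^u_\mu(f) = \sum_{v\in E^{uu}} \lambda_v(\mu)$'' is false for a general invariant measure; that equality holds only for measures with absolutely continuous conditionals along $\cF^{uu}$. The inequality you actually need, $h_\mu(f)\leq h^u_\mu(f)+\sum_{\lambda_c>0}\lambda_c(\mu)$, is a genuine Ledrappier--Young type inequality (cited in the paper as \cite{LY85a,LY85b,Bro22}); it should be invoked directly, not derived from a putative Pesin formula plus Margulis--Ruelle. The resulting uniform $\alpha$-hyperbolicity of MMEs is fine, though the paper's Lemma \ref{h(f)>h^s(f),h^u(f) implies far away from zero} gets the cleaner bound $\lambda_2\geq h(f)-h^u(f)$ rather than $\delta/2$.

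The more serious gap is the step ``the uniform hyperbolicity bounds the total number of such classes.'' This is simply not true: uniform $\alpha$-hyperbolicity of all MMEs does not, by itself, bound the number of homoclinic classes carrying MMEs. The actual argument (the paper's Theorem \ref{MainTheoA}) is a compactness-by-contradiction argument: assume infinitely many ergodic MMEs $\mu_n$; extract a weak$^*$ limit $\mu$; use entropy-expansiveness (Theorem \ref{Upper semi continuity of metric entropy}) to see that $\mu$ is itself a (possibly non-ergodic) MME, so by ergodic decomposition $\mu(\Lambda_\alpha(f))=1$ and all its ergodic components have the same index; then apply Lemma \ref{Uniform Pesin Blocks} to get $\mu_n(B_\ell(f))>1-2\varepsilon$ for $n$ large with $\ell$ fixed, and Lemma \ref{Uniform-size-implies-homoclinically -related} to conclude that two distinct $\mu_n$'s are homoclinically related --- contradicting Proposition \ref{BCS-critirium}. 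Without the accumulation-point argument the finiteness simply does not follow.

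For part (b), the claim that uniform hyperbolicity ``provides a uniform positive separation between the supports of distinct ergodic MMEs'' is not correct and is not used by the paper. Supports of distinct MMEs (or of their homoclinic classes) need not be disjoint or uniformly separated. The paper's argument runs the other way: fix the MMEs $\nu_1,\dots,\nu_k$ of $f$ and homoclinically-related periodic points $p_1(f),\dots,p_k(f)$; take their hyperbolic continuations $p_i(g)$; and show (Lemma \ref{key-Lemma-for-bounded}, which again goes via the Pesin-block estimates of Lemma \ref{Uniform Pesin Blocks}) that every MME of $g$ near $f$ must be homoclinically related to some $p_i(g)$. Since two distinct MMEs of $g$ cannot both be related to the same periodic orbit (again by Proposition \ref{BCS-critirium}), the count is at most $k$. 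Your proposal attaches a periodic orbit to each MME of $g$ and then hopes to continue it back to $f$ and argue separation; this direction is not only harder to make precise, it replaces the key lemma with a claim that is false as stated.
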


Here $h^s(f) $ denotes the topological stable entropy of $f$ and it is defined as $h^u(f^{-1})$. This result extends \cite{MP24} to two-dimensional center bundle with a dominated splitting.

\begin{remark}
    It can be proved that the maps in Theorem~\ref{theo-entropy-condition-dimension-2} satisfy conditions called entropy hyperbolicity and entropy continuity defined by Buzzi, Crovisier and Sarig in a recent result \cite{BCS-SPR}. 
    Their result implies that these maps are Strong Positive Recurrent, allowing to conclude part (a) of Theorem~\ref{theo-entropy-condition-dimension-2}.
Since our result was obtained independently, and our proof is simpler for the case we are dealing with, we choose to retain it.
\end{remark}

As an application we prove finiteness of MME for a class of Derived from Anosov diffeomorphisms as introduced by Ma\~n\'e in \cite{M78}.

\subsection*{Partially hyperbolic diffeomorphisms derived from Anosov.}
Let $f:\mathbb{T}^n\to \mathbb{T}^n$ be a partially hyperbolic diffeomorphism, it is said to be \emph{derived from Anosov} if its linear part $A\in \operatorname{SL}(n,\mathbb{Z})$ is hyperbolic. 

There are many results of the uniqueness of MME for derived from Anosov diffeomorphisms with one-dimensional center, i.e, with a decomoposition $E^{ss}\oplus E^c\oplus E^{uu}$, $\operatorname{dim} E^c=1$, see for example \cite{CR-TA}, \cite{FPS14}, \cite{MP23},\cite{ures2012intrinsic}.

Let $\DA$, $r>1$, be the set of partially hyperbolic $C^r$ diffeomorphisms derived from Anosov $f:\mathbb{T}^4\mapsto \mathbb{T}^4$ with dominated splitting $E^{ss}\oplus E_1\oplus E_2 \oplus E^{uu}$, where each bundle is one-dimensional, and its linear part $A$ has eigenvalues satisfying $|\lambda^{ss}|<|\lambda^{s}|<1<|\lambda^{u}|<|\lambda^{uu}|$.

 Buzzi et al \cite{BUZZI_FISHER_SAMBARINO_VÁSQUEZ_2012}, found open sets of Derived from Anosov diffeomorphisms having a unique MME, this includes an open subset of $\DA$.  Carrasco et al \cite{LIZ-PUJ} proved that for $A$ as above, if there are  some extra conditions on the integrability of the invariant bundles, then $h(f)=h(A)$. 
 Also, Alvarez et al \cite{AL-SA-VA} proved that if $f\in \DA$ satisfies that the pre-image of a point by the semi conjugation with the linear part has a good geometric structure and the set where the conjugation fails to be injective has zero measure then $f$ has a unique MME.

For the following result, we do not impose any assumptions on the semi-conjugacy to the linear part. We only require that the strong stable and unstable foliations of  $f\in\DA$ lift to \emph{quasi-isometric} foliations (see definition \ref{def-quasi-isometric}). By \cite{FPS14} this is satisfied for every $f\in \DA$ isotopic to its linear part by a path of partially hyperbolic diffeomorphisms.

\begin{main}\label{theo-DA}
    Let $f\in \DA$ be such that the lift of the stable and unstable foliations to the universal cover are quasi-isometric. Then $f$ has a finite number of MMEs. 
    Moreover, this number is an upper bound on the number of MMEs in a $C^1$ neighborhood of $f$.
    
This is true, in particular, for $f\in \DA$  isotopic to $A$ along a path of partially hyperbolic diffeomorphisms with one-dimensional strong stable and strong unstable bundles.

\end{main}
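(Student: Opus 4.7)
The plan is to reduce Theorem~\ref{theo-DA} to Theorem~\ref{theo-entropy-condition-dimension-2} by verifying the entropy gap
\[
h(f) \;>\; \max\{h^u(f), h^s(f)\};
\]
once this inequality is established, the finiteness of MMEs and the $C^1$--uniform upper bound follow at once, since the splitting and regularity hypotheses of Theorem~\ref{theo-entropy-condition-dimension-2} are already built into the definition of $\DA$.

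For the lower bound on $h(f)$, I would use the standard Franks-type semi-conjugacy $H : \mathbb{T}^4 \to \mathbb{T}^4$ available for any $f \in \DA$: it is continuous, surjective, at bounded distance $K$ from the identity in the universal cover, and satisfies $H \circ f = A \circ H$. Surjectivity of $H$ forces
\[
h(f) \;\geq\; h(A) \;=\; \log|\lambda^u| + \log|\lambda^{uu}| \;=\; -\log|\lambda^s| - \log|\lambda^{ss}|,
\]
the two expressions for $h(A)$ coming respectively from Pesin's formula for the linear Anosov $A$ and from $|\det A|=1$.

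The main step is to prove that $h^u(f) \leq \log|\lambda^{uu}|$; applying the same reasoning to $f^{-1}$ yields $h^s(f) \leq -\log|\lambda^{ss}|$. By the characterization of the topological unstable entropy in \cite{HHW17,Yan21}, $h^u(f)$ is controlled by the exponential volume growth rate $v^{uu}(f)$ of one-dimensional strong unstable arcs. Fix an arc $D \subset \tilde W^{uu}_f(x)$ of intrinsic length $1$ with endpoints $y, z$; quasi-isometry bounds the ambient distance $|y-z| \leq C$. Combining $H \circ \tilde f^n = A^n \circ H$ with $\|H - \mathrm{Id}\|_\infty \leq K$ on the universal cover gives
\[
|\tilde f^n y - \tilde f^n z| \;\leq\; 2K + \|A^n\|\bigl(|y-z| + 2K\bigr) \;=\; O\bigl(|\lambda^{uu}|^n\bigr),
\]
since $\|A^n\| \asymp |\lambda^{uu}|^n$. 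Applying quasi-isometry on $\tilde W^{uu}_f(\tilde f^n x)$ in the reverse direction converts this into an intrinsic length bound of the same order; uniform expansion of $Df$ on $E^{uu}$ prevents backtracking in $\tilde f^n D$, so this is in fact its arc-length. Hence $v^{uu}(f) \leq \log|\lambda^{uu}|$.

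Combining the estimates, since $|\lambda^u|>1$ and $|\lambda^s|<1$,
\[
h(f) \;\geq\; \log|\lambda^u| + \log|\lambda^{uu}| \;>\; \log|\lambda^{uu}| \;\geq\; h^u(f),
\]
and symmetrically $h(f) > h^s(f)$. Theorem~\ref{theo-entropy-condition-dimension-2} then yields both the finiteness and the $C^1$ upper semi-continuous bound. The final assertion follows from \cite{FPS14}: whenever $f \in \DA$ is isotopic to $A$ along partially hyperbolic diffeomorphisms with one-dimensional strong stable and strong unstable bundles, the lifted strong foliations are automatically quasi-isometric, so the hypothesis of Theorem~\ref{theo-DA} is satisfied. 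The main obstacle is making the volume-growth estimate fully rigorous -- tracking the quasi-isometry constants under iteration and confirming that the ambient spreading of $\tilde f^n D$ is genuinely controlled by $A^n$ through $H$ -- which is precisely the place where the one-dimensionality of $E^{uu}$ and $E^{ss}$ plays an essential role.
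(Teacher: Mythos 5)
Your proposal is correct and follows essentially the same route as the paper: establish the entropy gap $h(f) > \max\{h^u(f), h^s(f)\}$ via (i) the Franks semi-conjugacy giving $h(f) \geq h(A) = \log|\lambda^u| + \log|\lambda^{uu}|$, and (ii) a quasi-isometry estimate giving $h^u(f) \leq \log|\lambda^{uu}|$ (stated in the paper as Proposition~\ref{quasi-isometric-implies-hu-pequeno}), then feed this into Theorem~\ref{theo-entropy-condition-dimension-2}, and finally invoke \cite{FPS14} for the isotopy statement. One small simplification available to you: the ``backtracking'' worry you flag at the end is a non-issue once one observes, as the paper does, that $\mathcal{F}^{uu}$ is one-dimensional, so $f^n$ maps the subarc of $W^{uu}(z)$ between $x$ and $y$ homeomorphically onto the subarc of $W^{uu}(f^n z)$ between $f^n x$ and $f^n y$, and its arc-length is therefore exactly $d^u(f^n x, f^n y)$ — no separate expansion argument is needed to rule out folding.
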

For derived from Anosov diffeomorphisms with decomposition $E^{ss}\oplus E^c\oplus E^{uu}$ it was proved by \cite{FPS14} that if the diffeomorphism is isotopic to its linear part by a path of partially hyperbolic diffeomorphisms with the same decomposition then it has a unique MME.

\begin{question}
    In the set of diffeomorphisms isotopic to $A$ by a path of partially hyperbolic with the same splitting $E^{ss}\oplus E_1\oplus E_2\oplus E^{uu}$ does every diffeomorphism have a unique MME?
    If not, which are the diffeomorphisms where the bifurcation happens? 
\end{question}

In Theorem \ref{theo-entropy-condition-dimension-2}  all measures of high entropy have the same index (see Definition \ref{def-index}).
This requirement is also part of the entropy hyperbolicity condition of \cite{BCS-SPR}. However, we can also address certain cases where measures of maximal entropy admit different indices:

\begin{main}\label{MainTheo2index}
Let $f: M \to M$ be a $C^{1+}$ diffeomorphism on a compact manifold $M$ admitting a dominated splitting $E^{ss} \oplus E_1 \oplus \cdots \oplus E_k \oplus E^{uu}$, where $E^{ss}$ is uniformly contracting, $E^{uu}$ is uniformly expanding, and each $E_i$ is one-dimensional. Suppose there exist constants $\alpha > 0$ and $j \in \{1, \dots, k\}$ such that the following conditions hold:
\begin{enumerate}
    \item All MME are $\alpha$--hyperbolic.
\item There are no MME with an index greater than $j+dim(E^{ss})$.
\end{enumerate}
Then $f$ admits at most a finite number of MMEs of index $j+dim(E^{ss})$.
\end{main}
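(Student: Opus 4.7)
The plan is to reduce Theorem~\ref{MainTheo2index} to the two-bundle setting treated in Theorem~\ref{theo-entropy-condition-dimension-2} and \cite{MP24} by coarsening the one-dimensional dominated splitting according to the target index.

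First I would set $F^{cs} := E^{ss} \oplus E_1 \oplus \cdots \oplus E_j$ and $F^{cu} := E_{j+1} \oplus \cdots \oplus E_k \oplus E^{uu}$; this is again a dominated splitting $TM = F^{cs}\oplus F^{cu}$. For any ergodic $f$-invariant measure $\mu$, the Oseledets decomposition refines the one-dimensional dominated splitting, so the Lyapunov exponents $\chi_1(\mu) \le \cdots \le \chi_k(\mu)$ along $E_1, \ldots, E_k$ are well defined and ordered. If $\mu$ is an MME of index exactly $j + \dim(E^{ss})$, then precisely the first $j$ central exponents are negative, and by the $\alpha$-hyperbolicity hypothesis
\[
\chi_l(\mu) \le -\alpha \text{ for } l \le j, \qquad \chi_l(\mu) \ge \alpha \text{ for } l > j.
\]
Hence $F^{cs}$ is nonuniformly contracted and $F^{cu}$ nonuniformly expanded at a uniform rate $\alpha$ for every MME of the target index, so the coarsened splitting plays for these measures the role that $E^{cs}\oplus E^{cu}$ played in the two-bundle case.

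Next, since $f$ is $C^{1+}$, Pesin theory applied to the splitting $F^{cs} \oplus F^{cu}$ yields stable and unstable manifolds tangent to $F^{cs}$ and $F^{cu}$ at $\mu$-almost every point. The uniform exponent bound $\alpha$, together with the dominance of $F^{cs}\oplus F^{cu}$, produces Pesin blocks whose size and geometric estimates are uniform across all MMEs of the target index. Once this uniform local product structure is available, I would apply the argument of \cite{MP24}, in the form adapted in the proof of Theorem~\ref{theo-entropy-condition-dimension-2}: each such MME is shown to be carried by a single ergodic homoclinic class with uniform Pesin geometry, and a packing/volume argument on the compact manifold $M$ bounds the number of disjoint such classes, yielding the desired finiteness.

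The main obstacle is securing the uniformity of the Pesin estimates across the whole collection of target-index MMEs simultaneously, because a priori limits of such measures need not be of the same index. Condition (2) plays the decisive role here: it rules out sequences of target-index MMEs converging to an MME of strictly higher index, which would otherwise cause the center direction $E_{j+1}$ to become contracting in the limit and destroy the uniform contraction rate on $F^{cs}$. Accumulation on a limit MME of lower index is still allowed, but it cannot absorb infinitely many distinct target-index MMEs, because the uniform Pesin block estimates force distinct ergodic homoclinic classes of this type to occupy a definite amount of room in $M$.
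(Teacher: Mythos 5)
Your overall strategy mirrors the paper's: you coarsen the one-dimensional splitting to $F^{cs}=E^{ss}\oplus E_1\oplus\cdots\oplus E_j$ and $F^{cu}=E_{j+1}\oplus\cdots\oplus E_k\oplus E^{uu}$ (the paper's $E^{s,j}\oplus E^{u,j}$), use Pesin blocks for this coarser splitting, and invoke the Buzzi--Crovisier--Sarig criterion (Proposition~\ref{BCS-critirium}) to preclude infinitely many MMEs. You also correctly flag the delicate point that accumulation points of the sequence may be non-ergodic with components of different indices, and that condition~(2) controls the unstable side.

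However, there is a genuine gap where you assert, without mechanism, that the ``uniform Pesin block estimates'' exist and that a ``packing/volume argument'' finishes. The actual difficulty is to find a single Pesin block $B_\ell(f,E^{s,j},E^{u,j})$ receiving positive measure from infinitely many of the $\mu_n$, and the treatment must be \emph{asymmetric}. On the unstable side, condition~(2) guarantees that every ergodic component of the limit $\mu$ has $E^{u,j}$ inside its unstable Oseledets bundle, so $\mu(\Lambda^{cu}_\alpha(f,E^{u,j}))=1$ and Lemma~\ref{Uniform Pesin Blocks} yields $\mu_n(B^u_\ell)>1-\beta$ for large $n,\ell$. But on the stable side this argument is \emph{not available}: if $\mu$ has components of strictly smaller index, those components expand some $E_i$ with $i\le j$, so $\mu(\Lambda^{cs}_\alpha(f,E^{s,j}))<1$ and the analog of Lemma~\ref{Uniform Pesin Blocks} cannot be applied. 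The paper instead uses Lemma~\ref{Pliss-Lemma} (Pliss's lemma), which gives a \emph{uniform} $\beta>0$, depending only on $\alpha$ and $j$ and not on any accumulation point, such that every $\alpha$-hyperbolic measure of index $j+\dim(E^{ss})$ satisfies $\mu_n(B^s_\ell)>\beta$. The two bounds then force $\mu_n(B_\ell)>0$, and Lemma~\ref{Uniform-size-implies-homoclinically -related} (not a packing or volume estimate) produces two homoclinically related MMEs, contradicting Proposition~\ref{BCS-critirium}. Your proposal does not supply this asymmetric mechanism, and without it the uniform positivity of the Pesin block across the sequence is unjustified. A minor additional slip: convergence to a higher-index MME would destroy uniform expansion on $F^{cu}$, not, as you wrote, the contraction rate on $F^{cs}$.
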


In particular, when the central direction has dimension 3 and $\lambda_2(\mu)$ denotes the Lyapunov exponent in the direction of $E_2$ with respect to $\mu$, we have the following result:

\begin{main}\label{theo-entropy-condition-different index}
Let $f: M \to M$ be a $C^r$, $r>1$, diffeomorphism on a compact manifold $M$ admitting a dominated splitting $E^{ss} \oplus E_1 \oplus  E_2 \oplus E_3\oplus E^{uu}$, where $E^{ss}$ is uniformly contracting, $E^{uu}$ is uniformly expanding, and each $E_i$ is one-dimensional.
Such that:
\begin{itemize}
    \item $h(f)>\max\{h^u(f),h^s(f)\}$.
    \item There exists $\alpha>0$ such that every MME has $|\lambda_2(\mu)|\geq \alpha$.
\end{itemize} 
Then $f$ has a finite number $k$ of MME. 

Moreover, for every $\chi>0$, there exists a $C^1$ neighborhood of $f$, $\mathcal{V}_\chi$ such that for every $g\in C^r\cap \mathcal{V}_\chi$, the number of $\chi$-hyperbolic MMEs of $g$ is bounded by $k$.
\end{main}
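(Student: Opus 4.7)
The plan is to reduce Theorem \ref{theo-entropy-condition-different index} to Theorem \ref{MainTheo2index} by stratifying the MMEs of $f$ according to the sign of the center Lyapunov exponent $\lambda_2$, and by invoking Theorem \ref{MainTheo2index} both for $f$ and for $f^{-1}$.

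First I would establish that every MME is $\beta$-hyperbolic for a uniform $\beta>0$. Order the center Lyapunov exponents compatibly with the dominated splitting, so that $\lambda_1(\mu)<\lambda_2(\mu)<\lambda_3(\mu)$. A Ledrappier-Young estimate, combined with the variational principle for the unstable topological entropy, yields
\[
h_\mu(f)\leq h^u(f)+\sum_{i:\,\lambda_i(\mu)>0,\ E_i\subset E^c}\lambda_i(\mu),
\]
and symmetrically with $h^s(f)$ in place of $h^u(f)$ and negative center exponents. Setting $\delta:=h(f)-\max\{h^u(f),h^s(f)\}>0$ and using $h_\mu(f)=h(f)$, every MME satisfies $\sum_{\lambda_i(\mu)>0,\ \text{center}}\lambda_i(\mu)\geq\delta$ and $\sum_{\lambda_i(\mu)<0,\ \text{center}}|\lambda_i(\mu)|\geq\delta$. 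Combining with the hypothesis $|\lambda_2(\mu)|\geq\alpha$: if $\lambda_2(\mu)>\alpha$, then $\lambda_3(\mu)>\alpha$, $\lambda_1$ is the only possibly negative center exponent, and $|\lambda_1(\mu)|\geq\delta$; if $\lambda_2(\mu)<-\alpha$, then $\lambda_1(\mu)<-\alpha$ and $\lambda_3(\mu)\geq\delta$. In either case $\mu$ is $\min(\alpha,\delta)$-hyperbolic.

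This dichotomy partitions the MMEs into $\cA_1=\{\mu:\lambda_2(\mu)>0\}$, of index $\dim E^{ss}+1$, and $\cA_2=\{\mu:\lambda_2(\mu)<0\}$, of index $\dim E^{ss}+2$. Index $\dim E^{ss}+3$ is excluded because it would force all center exponents negative, contradicting $h_\mu(f)>h^u(f)$. Theorem \ref{MainTheo2index} applied to $f$ with $j=2$ therefore bounds $\#\cA_2$. To bound $\#\cA_1$, I would apply Theorem \ref{MainTheo2index} to $f^{-1}$: its dominated splitting is $E^{uu}\oplus E_3\oplus E_2\oplus E_1\oplus E^{ss}$, its MMEs coincide with those of $f$, and the $f^{-1}$-index is the complement of the $f$-index, so $\cA_1$ corresponds to $f^{-1}$-index $\dim E^{uu}+2$ while by the symmetric argument no MME of $f^{-1}$ has a larger index. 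Taking $k:=\#\cA_1+\#\cA_2$, the semicontinuity for $\chi$-hyperbolic MMEs of nearby $C^r$ diffeomorphisms follows by combining the upper semicontinuity built into Theorem \ref{MainTheo2index} applied to $f$ and to $f^{-1}$, using continuity of Lyapunov exponents on the $\chi$-hyperbolic locus to ensure that weak-$*$ limits of $\chi$-hyperbolic MMEs of $g_n\to f$ remain $\chi$-hyperbolic MMEs of $f$.

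The main technical obstacle I anticipate is the entropy-exponent bound in the first step: it must be stated finely enough to extract lower bounds on the smaller positive and the larger negative center exponent of an MME \emph{separately}, rather than just on the sum of positive (resp.\ negative) center exponents. Once uniform hyperbolicity is in hand, the remainder is a combinatorial reduction to Theorem \ref{MainTheo2index} applied in forward and backward time.
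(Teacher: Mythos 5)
Your treatment of the first part (finiteness) is essentially the paper's: you uniformize hyperbolicity via the Ledrappier–Young estimate and the hypothesis $|\lambda_2(\mu)|\geq\alpha$, conclude that every MME has index $\dim E^{ss}+1$ or $\dim E^{ss}+2$, and then apply Theorem~\ref{MainTheo2index} once to $f$ (for the larger index) and once to $f^{-1}$ (for the smaller index). The case analysis you carry out -- $\lambda_2>\alpha$ forces $\lambda_3>\alpha$ and $|\lambda_1|\geq\delta$; $\lambda_2<-\alpha$ forces $\lambda_1<-\alpha$ and $\lambda_3\geq\delta$ -- is exactly what makes the index dichotomy precise, and it is more careful than the one-line reference to Lemma~\ref{Uniformly far away from zero} in the paper. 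So far so good.

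The second part (semicontinuity) has a genuine gap. You assert that ``the upper semicontinuity built into Theorem~\ref{MainTheo2index}'' can be combined for $f$ and $f^{-1}$, but Theorem~\ref{MainTheo2index} as stated contains no semicontinuity clause whatsoever -- it is a pure finiteness statement for a single diffeomorphism. You then appeal to ``continuity of Lyapunov exponents on the $\chi$-hyperbolic locus to ensure that weak-$*$ limits of $\chi$-hyperbolic MMEs of $g_n\to f$ remain $\chi$-hyperbolic MMEs of $f$.'' This step fails: Lyapunov exponents are not continuous in the weak-$*$ topology, and the limit measure $\mu$ of a sequence of $\chi$-hyperbolic MMEs $\mu_n\in\cM_{g_n}^e$ is a measure of maximal entropy by Lemma~\ref{comparation metric entropy}, but it is typically \emph{non-ergodic}, and its ergodic components can have \emph{different indices}. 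This is precisely the difficulty the paper isolates and resolves in Lemma~\ref{p(g)SimMuj}: after passing to a subsequence of $\mu_n$'s with a fixed index, one constructs Pesin blocks $B_\ell(g_n,E^{s,i},E^{u,i})$ on which the $\mu_n$'s have uniformly positive mass (via Lemma~\ref{Pliss-Lemma} for the stable side, since there is no a priori bound $|\lambda_1|\geq\delta$ for the $g_n$-measures, plus Lemma~\ref{Uniform Pesin Blocks} for the unstable side), shows that some ergodic component $\nu_j$ of the limit $\mu$ with the \emph{same} index assigns positive mass to the limiting Pesin block, and then exhibits a transverse heteroclinic connection between $\mu_n$ and $\nu_j$. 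Without this construction the semicontinuity bound does not follow, and one also has to treat the threshold case $\chi\geq\alpha$ separately, as the paper does. Your proposal needs to supply an argument of this type; a bare appeal to continuity of exponents does not work.
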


If we have  the same uniform bound for the absolute value of $\lambda_2(\mu)$ in a neighborhood of $f$,  by the second part  of the above theorem, we have upper semi continuity on the number of MMEs.

\subsection*{Skew product over partially hyperbolic diffeomorphisms.}
In \cite{HHTU12}, Hetz et al proved that a partially hyperbolic skew product with circle fibers over an Anosov diffeomorphism generically has a finite number of MMEs. 
These examples were  further studied in \cite{tahzibi-yang-IP} and \cite{ures2021maximal}. Similar results were proved for perturbations of the time one map of Anosov flows, see \cite{BFT} and \cite{CP}. 
Here we study the case of skew product with circle fiber over partially hyperbolic diffeomorphisms.

Let $f:M\to M$ be as in Theorem~\ref{theo-entropy-condition-dimension-2} with a dominated splitting $E^{ss}_f\oplus E^-_f\oplus E^+_f\oplus E^{uu}_f$, where  $E^+$ and $E^-$ are identified as $E_1$ and $E_2$ respectively.

Let  $F:M\times S^1\to M\times S^1$ be a skew product $F(x,t)=(f(x),f_x(t))$ such that there exists $0<\gamma<1$ satisfying

\begin{equation}\label{eq.skewP}
\|Df(x)|_{E^-_f}\|\gamma^{-1}\leq \sup_{t\in S^1 }|f'_x(t)| \leq \|Df(x)|_{E^+_f}\|\gamma.    
\end{equation}
Let $\SP$ be the set of $C^r$, $r>1$, skew products as above. Then we have the following result.

\begin{main}\label{theo-entropy-Skew-Product}
     There is a $C^1$ open and $C^r$ dense subset of diffeomorphisms $\cU \subset\SP$ having a finite number of MMEs, all of them hyperbolic. 
     
Moreover, for every $F\in \cU$ and $\chi>0$, there exists a $C^1$ neighborhood $\mathcal{V}_\chi\ni F$ such that the number of MMEs of $F$ is an upper bound for  the number of $\chi$-hyperbolic MME of $G\in \cV_\chi$.
\end{main}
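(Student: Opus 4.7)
The strategy is to verify that (after a possible $C^r$-small perturbation) the skew product $F$ satisfies the hypotheses of Theorem~\ref{theo-entropy-condition-different index}, and then to quote that result together with its upper-semicontinuity part.

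\textbf{Structural step.} For any $F\in\SP$ the tangent bundle of $M\times S^1$ splits as
$$T(M\times S^1)\;=\;\widetilde E^{ss}\oplus\widetilde E^-\oplus E^{\mathrm{fib}}\oplus\widetilde E^+\oplus\widetilde E^{uu},$$
where tildes denote the pullbacks of the corresponding bundles of $f$ under the projection $\pi\colon M\times S^1\to M$ and $E^{\mathrm{fib}}$ is tangent to the circle fibers. Interpreting \eqref{eq.skewP} with the appropriate $\inf$ (for the lower bound) and $\sup$ (for the upper bound) produces a uniform domination gap $\gamma$ between $\widetilde E^-$ and $E^{\mathrm{fib}}$ and between $E^{\mathrm{fib}}$ and $\widetilde E^+$; therefore $F$ is partially hyperbolic with three one-dimensional center sub-bundles in the order required by Theorem~\ref{theo-entropy-condition-different index}. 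Since the strong stable and unstable foliations of $F$ are horizontal copies of those of $f$, one has $h^u(F)=h^u(f)$ and $h^s(F)=h^s(f)$; since $f$ is a topological factor, $h(F)\ge h(f)>\max\{h^u(f),h^s(f)\}$, so the entropy-gap hypothesis of Theorem~\ref{theo-entropy-condition-different index} holds automatically. It remains to ensure, generically, that every MME $\nu$ of $F$ has fiber Lyapunov exponent $\lambda^{\mathrm{fib}}(\nu)=\int\log f'_x(t)\,d\nu$ bounded away from zero in absolute value.

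\textbf{Density step.} Let $\cU\subset\SP$ consist of those $F$ for which there exists $\alpha=\alpha(F)>0$ with $|\lambda^{\mathrm{fib}}(\nu)|\ge\alpha$ for every MME $\nu$. Fix $F_0\in\SP$ and $\varepsilon>0$. Theorem~\ref{theo-entropy-condition-dimension-2} applied to the base map yields a finite list of $f$-MMEs $\mu_1,\ldots,\mu_k$. Combining the Ledrappier--Walters formula $h_\nu(F_0)=h_{\pi_*\nu}(f)+h_\nu(F_0\mid\pi)$ with Ruelle's inequality $h_\nu(F_0\mid\pi)\le\lambda^{\mathrm{fib}}_+(\nu)$ and the estimate $\lambda^{\mathrm{fib}}(\nu)\le\lambda^+(\pi_*\nu)+\log\gamma$ coming from \eqref{eq.skewP} forces every ergodic $F_0$-MME to project to some $\mu_i$, so that only finitely many of them, say $\nu_1,\ldots,\nu_N$, can fail the $|\lambda^{\mathrm{fib}}|\ge\alpha$ condition. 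For each offending $\nu_j$ one composes $f_x$ with a localized $C^r$-small circle perturbation supported on a Pesin-like block of $\nu_j$ that shifts $\lambda^{\mathrm{fib}}(\nu_j)$ by a prescribed non-zero amount while respecting \eqref{eq.skewP} after a negligible worsening of $\gamma$. Iterating this construction over $j=1,\ldots,N$, and using the upper semicontinuity of the MMEs provided by Theorem~\ref{theo-entropy-condition-dimension-2}(b) to keep the list of candidate measures finite at each step, produces $F\in\cU$ with $\|F-F_0\|_{C^r}<\varepsilon$.

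\textbf{Openness, conclusion, and main obstacle.} The set $\cU$ is $C^1$-open: if $G$ is $C^1$-close to $F\in\cU$ with constant $\alpha$, then its bundle $E^{\mathrm{fib}}_G$ varies continuously (it is forced by the domination cones), and any weak-$*$ limit of $G$-MMEs along a sequence $G_n\to F$ is an MME of $F$, so by continuity of $\log f'_x(t)$ every MME of $G$ sufficiently close to $F$ satisfies $|\lambda^{\mathrm{fib}}|\ge\alpha/2$. Applying Theorem~\ref{theo-entropy-condition-different index} to each $F\in\cU$ with parameter $\alpha/2$ gives finitely many hyperbolic MMEs together with the upper-semicontinuity statement of Theorem~\ref{theo-entropy-Skew-Product} in a $C^1$-neighborhood $\mathcal V_\chi$, concluding the proof. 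The delicate point is the density step: perturbations of $F_0$ within $\SP$ move the MMEs themselves, so the perturbation must simultaneously prevent any new MME from having a zero fiber exponent. This is handled via the finite factor control provided by Theorem~\ref{theo-entropy-condition-dimension-2} on the base and the upper semicontinuity of its MMEs, which together guarantee that at every iteration only a finite, controlled list of measures has to be ``unstuck'' from the zero-exponent locus.
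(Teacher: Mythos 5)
The structural step and the openness sketch are fine in spirit, but the density step has a genuine gap that the paper fills with a completely different mechanism.

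\textbf{The density argument does not work as written.} You begin with an arbitrary $F_0\in\SP$ and claim that ``only finitely many of them, say $\nu_1,\ldots,\nu_N$, can fail the $|\lambda^{\mathrm{fib}}|\ge\alpha$ condition.'' This does not follow from the fact that every $F_0$-MME projects to one of the finitely many $\mu_i$: a priori, $F_0$ can have \emph{infinitely} many ergodic MMEs all projecting to the same $\mu_i$ and all with fiber exponent close to zero (this is exactly the situation the theorem is trying to exclude, so you cannot presuppose finiteness). Consequently, there is no finite list $\nu_1,\ldots,\nu_N$ to ``unstick,'' and the iterative perturbation scheme has no base case. Even granting finiteness, the scheme is circular: after perturbing to push $\lambda^{\mathrm{fib}}(\nu_j)$ away from zero you have a new map whose MMEs are unrelated to $\nu_1,\ldots,\nu_N$, and upper semicontinuity from Theorem~\ref{theo-entropy-condition-dimension-2}(b) only controls the MMEs of the \emph{base} $f$, not the fiberwise exponents of the new MMEs of the perturbed skew product. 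You correctly identify this obstacle in your last paragraph but then just assert it is ``handled''; no argument is given. The paper avoids this entirely: it does not try to move the exponents of a list of measures by hand. Instead it shows (Proposition~\ref{invariance-principle}, via \cite{AV-IP} applied to the symbolic lift from Theorem~\ref{BCS}) that \emph{if} there were MMEs with fiber exponent tending to zero, then the skew product would admit an invariant measure with a continuous $su$-invariant center disintegration, and then introduces the \emph{pinching and twisting} conditions, which are manifestly $C^1$ open and $C^r$ dense (a single rotation perturbation at a homoclinic point suffices), and which are incompatible with such a disintegration (Lemma~\ref{lem.PandT}). This is a uniform, open-dense genericity argument rather than a measure-by-measure repair.

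\textbf{Two smaller issues.} (i) In the openness step, the weak-$*$ limit of $G_n$-MMEs is an MME of $F$ but need not be ergodic; its ergodic components may have fiber exponents $\ge\alpha$ and $\le-\alpha$, so the integral $\int\log|f'_x(t)|$ of the limit can vanish even when $F\in\cU$. Thus the set $\cU$ you define is not obviously $C^1$ open by the argument you give; the paper sidesteps this by making pinching--twisting (rather than the exponent bound itself) the open-dense condition. (ii) Invoking Ruelle's inequality for the fiber entropy is unnecessary: the fibers are circles and $h(f,P^{-1}(x))=0$, so Ledrappier--Walters already gives $h_\nu(F)=h_{\pi_*\nu}(f)$ directly, as in Lemma~\ref{lem.entropy-ineq}.
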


To prove Theorem~\ref{theo-entropy-Skew-Product} we find an open and dense set of diffeomorphisms such that each of them satisfies the hypothesis of Theorem~\ref{theo-entropy-condition-different index}. In particular, we prove that each diffeomorphism there has a lower bound on the center Lyapunov exponent. If one can find a uniform  lower bound for the center Lyapunov exponent in the neighborhood of $F$ then, by the second part of the above theorem, the number of MMEs will be upper semi continuous on $F$.

\begin{question}
    Can we find an open and dense set where the number of MME varies upper semi-continuously? Can we have $F_k\to F$, with $F$ in our open and dense set, such that $F_k$ has a MME $\tilde{\mu}_k$ with $\lambda^c(\tilde{\mu}_k)\to 0$?
\end{question}

In \cite{HHTU12} (for $F$ a skew product over Anosov) and \cite{CP15} (for $F$ a perturbation of the time-one map of an Anosov flow), it is proven that under accessibility, if there exists a measure of maximal entropy with a zero center exponent, then it is unique. Moreover, this corresponds to a rigid case: a rotation-type behavior  in \cite{HHTU12}  and the time-one map of a flow in \cite{CP15}.

\begin{question}
    If $F\in \SP$ is accessible, for each measure of maximal entropy of $f$ can we have more than one MME with zero center exponent that projects on it? Do we have some rigidity in the case of zero exponent?
\end{question}
    
\subsection*{Idea of the proofs} 

In this paper, we use the Buzzi-Crovisier-Sarig criterion, which establishes that, for diffeomorphisms, hyperbolic MMEs cannot be homoclinically related in the sense of definition \ref{def-HR-measures}. The last author, Lima and Obata, generalized this criterion for maps that can be noninvertible and admit singularities, \cite{LOP}. 
Similar ideas were used in \cite{obata2021uniqueness}, \cite{MP24}, \cite{lima2024homoclinic}. 

For \textbf{Theorem \ref{theo-entropy-condition-dimension-2}}, under the assumption that $h(f) > \max\{h^u(f), h^s(f)\}$, we prove that every MME of $f$ has central Lyapunov exponents bounded away from zero. Moreover, showing that this condition is open, we extend these results to any $C^{r}, r> 1,$ diffeomorphism in an open neighborhood of $f$ in the $C^1$ topology. In particular, we prove that there exists $\beta > 0$ such that every MME is $\beta$--hyperbolic with index 
$\dim E^s + 1$.
    
Using the fact that metric entropy is upper semicontinuous, we prove that every accumulation point of the sequence of MMEs $\mu$ is a measure of maximal entropy. So, the Lyapunov exponents are strictly greater than $\beta$ and have the same index.
    
 We use this to construct stable Pesin blocks such that any MME sufficiently close to $\mu$ assigns a measure arbitrarily close to one to the same block.

Since points in Pesin blocks have invariant Pesin manifolds of uniform size, if there are infinite MMEs, we conclude that at least two distinct MMEs must be homoclinically related. However, this contradicts the Buzzi-Crovisier-Sarig criterion, completing the proof of finiteness.

Once  we have established that $f$ has a finite number of MMEs, we denote them by $\nu_1, \dots, \nu_k$. 
For each $\nu_i$, we take a hyperbolic periodic point $p_i(f)$  that  is homoclinically related to $\nu_i$. We consider the hyperbolic continuation $p_i(g)$ for $g$. To bound the number of MMEs of $g$ by the number of MMEs of $f$, we prove that each MME of $g$ is homoclinically related to at least one $p_i(g)$, thereby establishing an injective map from the set of MMEs of $f$ to the set of MMEs of $g$ for $g$ close enough to $f$.

For the proof of \textbf{Theorem \ref{theo-DA}}, we prove that when the strong unstable foliation is quasi-isometric, then $h^u(f) = h^u(A)$, where $A$ is the linear part of $f$. Then, the first part follows as a corollary of Theorem \ref{theo-entropy-condition-dimension-2}. 

For the second part, we recall that in \cite{FPS14}, it was proved that the strong stable and unstable foliations of each diffeomorphism $f \in \DA$, which is isotopic to $A$ along a path of partially hyperbolic diffeomorphisms, are quasi-isometric.
 
The main challenge in \textbf{Theorem \ref{MainTheo2index}} compared to \textbf{Theorem \ref{theo-entropy-condition-dimension-2}} is that the accumulation points of the MMEs of $f$ may be measures of maximal entropy that are not ergodic. As a result, points with different indices may co-exist, making it difficult to identify Pesin blocks with positive measure for MMEs near the accumulation point.

Assuming there are infinite MMEs, we consider  an accumulation point $\mu$ of the MMEs of $f$. We prove that $\mu$ is a measure of maximal entropy, so its ergodic decomposition consists of MMEs. 
By hypothesis we have that $E_j(x) \oplus \cdots \oplus E^{uu}(x)\subset T_xW^u(x)$ $\mu$-almost every point $x\in M$. Thus, we can construct an unstable Pesin block for each accumulation point $\mu$ of MMMEs with the measure as close as we want to one for the measure that is close enough to $\mu$.
    
We also construct a stable Pesin block $B^s_\ell$ with positive measure, ensuring the existence of $\beta > 0$ such that every MME assigns measure at least $\beta$ to the stable Pesin block. With this we guaranty that the Pesin block, $B^u_\ell \cap B^s_\ell$, has uniform positive measure for measures close to an accumulation point of MMEs. This allows us to replicate the argument from the first part of Theorem \ref{theo-entropy-condition-dimension-2} to establish the finiteness of MMEs.

The first part of \textbf{Theorem \ref{theo-entropy-condition-different index}} follows as a consequence of Theorem \ref{MainTheo2index}. 
To prove the second part, we adopt the approach used in the second part of the Theorem \ref{theo-entropy-condition-dimension-2}.
 This case is more delicate because of the possibility of appearing points with different indices and the lack of a uniform bound for the exponents in a neighborhood of $f$.

%
%
%

To prove \textbf{Theorem \ref{theo-entropy-Skew-Product}} we use the fact that the base diffeomorphism has finite MMEs and for each MME we use \cite{Ova18} and \cite{BCS22} to lift $F$ to a skew product over a shift map, then using the invariance principle~\cite{AV-IP} we prove that, if the exponents are not bounded away from zero, there exists some $F$-invariant measures that are invariant by holonomies. We use some \emph{pinching} and \emph{twisting} conditions, that are $C^1$ open and $C^r, r> 1,$ dense, to conclude that this extra invariance is not possible.

\section{Preliminaries}\label{s-preliminar}

\subsection{Measure of maximal entropy for homoclinic classes}

In this section, we establish the notation and present previous results that will be used throughout the paper. 

\subsubsection{Hyperbolic Sets and Homoclinic Classes}\label{subsectionHomoclinicClass}

In this subsection, we revisit the concept of homoclinic classes, originally introduced by Newhouse \cite{New72}.

Let $f: M \to M$ be a diffeomorphism defined on a compact manifold $M$. 
An $f$-invariant  compact set  $\Lambda$ is \textit{hyperbolic} for $f$ if there exists a splitting of the tangent bundle $T\Lambda$ into two subbundles $E^s$ and $E^u$ satisfying the following conditions:
\begin{enumerate}
    \item For every $x \in \Lambda$, the subbundles are invariant under $Df_x$, i.e.,
    $$
    Df_xE^s_x = E^s_{f(x)} \quad \text{and} \quad Df_xE^u_x = E^u_{f(x)}.
    $$
    \item There exist constants $C > 0$ and $0 < \lambda < 1$ such that for all $x \in \Lambda$:
    $$
    \|Df^n_x v\| \leq C \lambda^n \|v\|, \quad \text{for all } v \in E^s_x \text{ and } n \geq 1,
    $$
    and
    $$
    \|Df^{-n}_x v\| \leq C \lambda^{n} \|v\|, \quad \text{for all } v \in E^u_x \text{ and } n \geq 1.
    $$
\end{enumerate}

It is well known that if $\Lambda$ is a hyperbolic set, then for any $x \in \Lambda$, the following sets are $C^1$-immersed submanifolds \cite{HPS70}:
$$
W^s(x) = \{y \in M : \lim_{n \to \infty} d(f^n(x), f^n(y)) = 0\},
$$
$$
W^u(x) = \{y \in M : \lim_{n \to \infty} d(f^{-n}(x), f^{-n}(y)) = 0\}.
$$
These are referred to as the \textit{stable} and \textit{unstable manifolds} of $x$, respectively. 

When a periodic orbit satisfies the hyperbolicity condition it is called a \textit{hyperbolic periodic orbit}. The set of all hyperbolic periodic orbits of $f$ is denoted by $\operatorname{Per}_h(f)$.  For each $\cO\in \Per_h(f)$ we define $W^{s/u}(\cO):=\cup_{x\in \cO}W^{s/u}(x)$. 

For any $C^r$-submanifolds $U, V \subset M$, $r\geq 1$, their transverse intersection is defined as:
$$
U \pitchfork V = \{x \in U \cap V : T_xU + T_xV = T_xM\}.
$$

Two orbits $\mathcal{O}_1, \mathcal{O}_2 \in \operatorname{Per}_h(f)$ are  \textit{homoclinically related} if:
$$
W^s(\mathcal{O}_1) \pitchfork W^u(\mathcal{O}_2) \neq \emptyset \quad \text{and} \quad W^s(\mathcal{O}_2) \pitchfork W^u(\mathcal{O}_1) \neq \emptyset.
$$
In this case, we write $\mathcal{O}_1 \sim \mathcal{O}_2$. The pair $(\operatorname{Per}_h(f), \sim)$ forms an equivalence relation.
Given a subset $A\subset M$, $\overline{A}$ means the closure of $A$ in $M$.

\subsubsection{Hyperbolic Measures and Homoclinic Classes}\label{subsectionHyperbolicMeasures}

This subsection reviews key results concerning hyperbolic measures and homoclinic classes, following \cite{BCS22}.

Let $x \in M$ and let $v \in T_xM$ be a nonzero vector. The Lyapunov exponent of $f$ at $x$ in the direction of $v$ is defined as:
$$
\lambda(f,x,v) = \lim_{{n \to +\infty}} \frac{1}{n} \log \left\| Df^n(v) \right\|,
$$
provided the limit exists.

Let $\cM_f^e$ denote the set of ergodic measures for $f$. If $\mu \in \cM_f^e$, the Oseledets theorem guarantees the existence of Lyapunov exponents for $\mu$--almost every $x \in M$ \cite{Ose68}.

\begin{theorem}[Oseledets]
Let $(f, \mu)$ be as described above. Then, for $\mu$-almost every $x \in M$, there exist real numbers $\lambda_1(f,\mu) < \lambda_2(f,\mu) < \cdots < \lambda_k(f,\mu)$ and a splitting:
$$
T_xM = E_{x,1} \oplus E_{x,2} \oplus \cdots \oplus E_{x,k},
$$
such that:
\begin{enumerate}
    \item $Df(E_{x,i}) = E_{f(x),i}$ for $i = 1, \ldots, k$.
    \item For any nonzero $v \in E_{x,i}$, 
    $$
    \lambda(x,v) = \lim_{{n \to \pm\infty}} \frac{1}{n} \log \left\| Df^n_xv \right\| = \lambda_i.
    $$
\end{enumerate}
\end{theorem}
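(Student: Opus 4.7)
The Oseledets multiplicative ergodic theorem is a classical result and the cleanest route is the one pioneered by Raghunathan and systematized by Ruelle, which I would reproduce in the invertible setting needed here. The plan is to first establish the existence of the Lyapunov exponents as almost-sure limits, then construct a \emph{filtration} of $T_xM$ by subspaces characterized by exponential growth rates, and finally upgrade this filtration to a splitting by using the invertibility of $f$ to build matching filtrations from both time directions.

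First I would invoke Kingman's subadditive ergodic theorem applied to the cocycles $\varphi_n^{(k)}(x) \eqdef \log \|\wedge^k Df^n_x\|$ for $k=1,\dots,\dime M$; subadditivity follows from submultiplicativity of operator norms of exterior powers, and the integrability of $\varphi_1^{(k)}$ follows from the compactness of $M$ together with $f$ being $C^1$. Kingman's theorem then yields $\mu$-almost everywhere limits
\begin{equation*}
\Lambda_k(x) \eqdef \lim_{n\to\infty}\tfrac{1}{n}\log\|\wedge^k Df^n_x\|,
\end{equation*}
which, by ergodicity of $\mu$, are constants $\Lambda_k$. Declaring $\lambda_i$ to be the distinct values in the multiset of differences $\{\Lambda_k - \Lambda_{k-1}\}$ and $m_i$ their multiplicities, one obtains the putative exponents $\lambda_1 < \dots < \lambda_k$.

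Next I would construct the forward Oseledets filtration by setting
\begin{equation*}
V^+_i(x) \eqdef \bigl\{v \in T_xM : \limsup_{n\to\infty}\tfrac{1}{n}\log\|Df^n_xv\| \leq \lambda_i\bigr\},
\end{equation*}
and show (this is the main technical point) that $\dime V^+_i(x) = m_1+\dots+m_i$ and that on $V^+_i(x)\setminus V^+_{i-1}(x)$ the $\limsup$ is actually a limit equal to $\lambda_i$. The standard argument here compares $\|\wedge^j Df^n_x\|$ with the product of the $j$ largest singular values of $Df^n_x$ and uses the almost-sure existence of the limits $\Lambda_j$; this is the step where the most bookkeeping is needed. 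Applying the same construction to $f^{-1}$ produces a backward filtration $V^-_i(x)$ associated with the reversed exponent ordering.

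Finally, the $Df$-invariance of each filtration, combined with the Poincaré recurrence--type argument that the angles $\angle(V^+_i(f^n x), V^-_{k-i}(f^n x))$ cannot decay faster than subexponentially (since they are measurable and $f$-invariant up to a set of measure zero), forces
\begin{equation*}
E_{x,i} \eqdef V^+_i(x)\cap V^-_{k-i+1}(x)
\end{equation*}
to have dimension exactly $m_i$ and to provide an invariant splitting along which $\tfrac{1}{n}\log\|Df^n_xv\|\to\lambda_i$ for nonzero $v$, in both time directions. The hardest step, as already flagged, is proving that the forward filtration has the correct dimensions and that the $\limsup$ in its definition is attained as a genuine limit along each graded piece; everything else is essentially algebraic manipulation of the filtrations together with Kingman's theorem. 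Since the statement is identical to Oseledets' original formulation, one could alternatively cite \cite{Ose68} directly, but the outline above is the proof I would reproduce if pressed.
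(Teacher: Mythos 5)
The paper does not prove this statement; it is the Oseledets multiplicative ergodic theorem, presented as background and attributed directly to \cite{Ose68} with no argument given. Your sketch of the Raghunathan--Ruelle route (Kingman applied to $\log\|\wedge^k Df^n\|$ to produce the exponents, forward and backward filtrations with the correct graded dimensions, and intersection of the two filtrations together with subexponential angle control to obtain the invariant splitting) is a correct outline of the standard proof for an invertible $C^1$ map over an ergodic measure on a compact manifold, and you yourself flag the alternative of just citing \cite{Ose68}, which is exactly what the paper does. One small bookkeeping remark: for the angle argument to mesh with your definition $E_{x,i}=V^+_i(x)\cap V^-_{k-i+1}(x)$, the complementary pair whose angle you want to control is $V^+_{i-1}$ against $V^-_{k-i+1}$ (or, equivalently, $V^+_i$ against $V^-_{k-i}$); as written the indices in the two places don't match, though the underlying idea is right.
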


The set of points where the Oseledets theorem holds is known as the Lyapunov regular set, denoted by ${\cR}$. For simplicity, we write $\lambda_i(\mu):=\lambda_i(f,\mu)$ when there is no confusion on what diffeomorphism $f$ we are talking about.

\begin{definition}\label{Hyperbolicmeasure} An ergodic measure $\mu$ is \textit{hyperbolic} if all  its Lyapunov exponents satisfy $|\lambda_i(f,\mu)| > 0$. For a given $\alpha > 0$, an ergodic measure $\mu$ is  \textit{$\alpha$--hyperbolic} if all  its Lyapunov exponents satisfy $|\lambda_i(f,\mu)| > \alpha$. The sets of all ergodic hyperbolic measures and $\alpha$-hyperbolic ergodic measures are denoted by $\cM_f^h$ and $\cM_f^\alpha$, respectively. 
\end{definition}

 For $\mu \in \cM_f^h$, the tangent space at $\mu$-almost every $x \in M$ admits a splitting:
$$
T_x M = E^-_x(f) \oplus E^+_x(f),
$$
where $E^-_x$ and $E^+_x$ are subspaces satisfying:
$$
\lim _{n \to \infty} \frac{1}{n} \log \left\|D f_x^n v\right\| < 0 \quad \text{on } E^-_x \setminus \{0\},
$$
and
$$
\lim _{n \to \infty} \frac{1}{n} \log \left\|D f_x^{n} v\right\| > 0 \quad \text{on } E^+_x \setminus \{0\}.
$$

For $f \in \Diff^{1+}(M)$, the stable Pesin set of $x \in M$ is defined as:
$$
W^{s}(x)=\left\{y \in M: \limsup _{n \to +\infty} \frac{1}{n} \log d\left(f^n(x), f^n(y)\right) < 0\right\}.
$$
Similarly, the unstable Pesin set is:
$$
W^{u}(x)=\left\{y \in M: \limsup _{n \to +\infty} \frac{1}{n} \log d\left(f^{-n}(x), f^{-n}(y)\right) < 0\right\}.
$$
For $\mu \in \cM_f^h$, it is known that, for $\mu$-almost every $x \in M$, the sets $W^{s}(x)$ and $W^{u}(x)$ are immersed submanifolds of dimensions $\dim(E^-)$ and $\dim(E^+)$, respectively. These are referred to as the \textit{stable} and \textit{unstable Pesin manifolds} \cite{Pes76}. For hyperbolic periodic orbits, stable and unstable Pesin manifolds coincide with the stable and unstable manifolds. 

For  $\varepsilon > 0$, the connected components containing $x$ within the intersections $W^s(x) \cap B_\varepsilon(x)$ and $W^u(x) \cap B_\varepsilon(x)$ are called the \textit{local stable manifold} and \textit{local unstable manifold}, denoted by $W^s_\varepsilon(x)$ and $W^u_\varepsilon(x)$, respectively.

We will need a more quantitative definition of Pesin manifolds. Fixed $C>1$, $\chi>0$ and $\varepsilon>0$, define 
$$W^s_{C,\chi,\varepsilon}(f,x)=\{y\in B_\varepsilon(x);d(f^n(x),f^n(y)\leq Ce^{-\chi n}d(x,y),n\geq 0\}.$$
Analogously we define $W^u_{C,\chi,\varepsilon}(f,x)$ replacing $ n\geq 0$ by  $n\leq 0$. By \cite{Pes76} if $\mu\in \mathcal{M}^h_f$, for $\mu$ almost every $x\in M$, there exists $C,\chi$ such that $W^*_\varepsilon(f,x)=W^*_{C,\chi,\varepsilon}(f,x)$, $*=s,u$. For simplicity, when there is no confusion on which $f$ we are talking about we omit the variable $f$.

\begin{definition}\label{def-index}
    When a point $x \in M$ has well-defined stable and unstable Pesin manifolds, we say that its index is given by $\operatorname{index}(x) = \dim (T_x W^s(x))$.
The \textit{index} of a hyperbolic ergodic measure $\mu$ is defined as $\operatorname{index}(\mu) := \dim(E^-)$. 
\end{definition}

\begin{definition}
For two measures $\mu_1, \mu_2 \in \cM_f^h$, we write $\mu_1 \preceq \mu_2$ if there exist sets $\Lambda_1, \Lambda_2$ such that $\mu_i(\Lambda_i) > 0$, and for any $(x, y) \in \Lambda_1 \times \Lambda_2$, the stable and unstable Pesin manifolds satisfy:
$$
W^{u}(x) \pitchfork W^{s}(y) \neq \emptyset.
$$
\end{definition}

\begin{definition}\label{def-HR-measures}
Two ergodic measures $\mu_1, \mu_2 \in \cM_f^h$ are \textit{homoclinically related} if $\mu_1 \preceq \mu_2$ and $\mu_2 \preceq \mu_1$. In this case, we write $\mu_1 \sim \mu_2$. The homoclinic class of a measure $\mu \in \cM_f^h$ is defined as:
$$
HC(\mu) = \{\nu \in \cM_f^h: \mu \sim \nu\}.
$$
\end{definition}

The relation $\sim$ is an equivalence relation on $\cM_f^h$ \cite[Proposition 2.11]{BCS22}.

\begin{definition}\label{RelationMuO}
For a hyperbolic measure $\mu \in \cM_f^h$ and a hyperbolic periodic orbit $\mathcal{O} = \{p, f(p), \ldots, f^{n-1}(p)\}$, we say that $\mu$ and $\mathcal{O}$ are \textit{homoclinically related}, and write $\mathcal{O} \sim \mu$ or $p \sim \mu$, if the measure $\frac{1}{n} \sum_{j=0}^{n-1} \delta_{f^j(p)}$ is homoclinically related to $\mu$.
\end{definition}

\begin{proposition}[{\cite[Corollary 2.14]{BCS22}}]\label{HoclinicClassRelation}
For any $f \in \Diff^r(M)$, $r > 1$, and $\mu \in \cM_f^h$, the set of measures supported by periodic orbits is dense in the set of hyperbolic ergodic measures homoclinically related to $\mu$, in the weak-$\ast$ topology. In particular, there exists $\mathcal{O} \in \operatorname{Per}_h(f)$ such that $\mathcal{O} \sim \mu$ and:
$$
HC(\mu) = HC(\mathcal{O}).
$$
\end{proposition}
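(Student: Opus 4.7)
The plan is to combine Katok's horseshoe approximation theorem with the fact that $\sim$ is an equivalence relation on $\cM_f^h$. The statement has two parts: (i) the existence of $\cO \in \operatorname{Per}_h(f)$ with $HC(\mu)=HC(\cO)$ and (ii) weak-$\ast$ density of periodic measures inside $HC(\mu)$. Both reduce to producing hyperbolic horseshoes that are forced, by their geometric construction, to lie in $HC(\mu)$.

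First, I would fix a Pesin block $\Lambda_\ell\subset M$ with $\mu(\Lambda_\ell)>0$ on which the local stable/unstable Pesin manifolds have a uniform size $\varepsilon>0$, uniform hyperbolic constants $C,\chi$, and uniformly transverse tangent spaces; such a block exists since $\mu\in\cM_f^h$ and $f\in\Diff^{1+}(M)$. Katok's approximation theorem (see \cite{Pes76} and standard refinements) then produces, for each $\delta>0$, a compact, $f$-invariant, topologically transitive, uniformly hyperbolic set $K=K(\delta)$ of the same index as $\mu$, constructed by shadowing long regular segments of orbits through $\Lambda_\ell$, such that every $f|_K$-invariant probability is $\delta$-close to $\mu$ in the weak-$\ast$ topology. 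Because $K$ is built by shadowing inside $\Lambda_\ell$, its local stable/unstable manifolds are $C^1$-close to the Pesin manifolds $W^s_{C,\chi,\varepsilon}(x)$, $W^u_{C,\chi,\varepsilon}(x)$ at a $\mu$-positive set of $x\in\Lambda_\ell$, and the inclination lemma plus uniform transversality yields
$$W^u(\cO)\pitchfork W^s(x)\neq\varnothing \quad\text{and}\quad W^s(\cO)\pitchfork W^u(x)\neq\varnothing$$
for every periodic orbit $\cO\subset K$ and a positive-$\mu$-measure set of $x$. This gives $\cO\sim\mu$; taking any such $\cO$ and invoking that $\sim$ is an equivalence relation on $\cM_f^h$ yields $HC(\mu)=HC(\cO)$, which is part (i).

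For part (ii), consider an arbitrary $\nu\in HC(\mu)$. Applying the Katok construction to $\nu$ instead of $\mu$ produces, for every $\delta>0$, a transitive hyperbolic set $K_\nu(\delta)$ whose invariant measures are $\delta$-close to $\nu$ and whose periodic orbits $\cO'$ satisfy $\cO'\sim\nu$ by the same argument as above. Since $\nu\sim\mu$, transitivity of $\sim$ gives $\cO'\sim\mu$, so the periodic measure $\frac{1}{\#\cO'}\sum_{p\in\cO'}\delta_p$ belongs to $HC(\mu)$. Choosing such periodic measures inside $K_\nu(1/n)$ for $n\to\infty$ produces the required weak-$\ast$ approximating sequence.

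The main technical obstacle is the first step: verifying that the periodic orbits in Katok's horseshoe are genuinely homoclinically related to $\mu$ and not merely weak-$\ast$ close. This requires carefully tracking the Pesin constants of the shadowing construction so that the local invariant manifolds of points of $K$ and the Pesin manifolds at points of $\Lambda_\ell$ meet transversely with a quantitative margin; this is standard but delicate, and is precisely the content of the part of \cite{BCS22} we are citing.
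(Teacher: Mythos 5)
The paper offers no proof of this proposition: it is quoted verbatim from \cite[Corollary 2.14]{BCS22} and used as a black box, so there is no in-paper argument to compare your sketch against. That said, your Katok-horseshoe outline is the expected route to a statement of this type and is sound in spirit: you correctly identify the crux --- upgrading weak-$\ast$ closeness of horseshoe periodic measures to a genuine homoclinic relation with $\mu$, by keeping uniform Pesin constants $C,\chi,\varepsilon$ and a uniform transversality margin through the shadowing construction so that the horseshoe's local invariant manifolds meet the Pesin manifolds of a $\mu$-positive subset of the block transversely --- and you correctly combine transitivity of $\sim$ on $\cM_f^h$ with the mutual homoclinic relation of periodic orbits inside a transitive hyperbolic set to obtain both the existence of $\cO\sim\mu$ with $HC(\mu)=HC(\cO)$ and the weak-$\ast$ density claim. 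Whether \cite{BCS22} organize their argument exactly this way or route it through their symbolic-dynamics framework is a question about their paper; within this one the proposition is simply cited, not proved.
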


\begin{proposition}[{\cite{BCS22}}]\label{BCS-critirium}
Let $f: M \to M$ be a $C^r$ diffeomorphism with $r > 1$,  and $p$ a hyperbolic periodic point with orbit $\mathcal{O}$. Then, there is at most one hyperbolic measure of maximal entropy homoclinically related to $\mathcal{O}$, and its support coincides with $HC(\mathcal{O})$.
\end{proposition}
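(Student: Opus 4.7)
The plan is to follow the symbolic dynamics approach of Sarig, extended to higher dimensions by Ben Ovadia and exploited by Buzzi--Crovisier--Sarig. The overall strategy is to lift every hyperbolic MME homoclinically related to $\mathcal O$ to a countable topological Markov shift (TMS), show that all such lifts land in the same irreducible component of the shift, and then invoke the classical Gurevich--Sarig theorem that an irreducible TMS carries at most one MME of given Gurevich entropy.

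First, I would fix $\chi>0$ smaller than the absolute value of every Lyapunov exponent of $\mathcal O$, and build Pesin blocks $\Lambda_{C,\chi,\varepsilon}$ on which the local stable/unstable Pesin manifolds $W^{s/u}_{C,\chi,\varepsilon}(x)$ depend continuously on $x$ and have uniform size. Using Ovadia's construction, obtain a locally compact countable TMS $(\Sigma,\sigma)$ together with a Hölder factor map $\pi\colon \Sigma\to M$ with $\pi\sigma=f\pi$, coding all $\chi$-hyperbolic orbits that return infinitely often to the Pesin blocks. Every ergodic $\chi$-hyperbolic measure $\mu$ with $h_\mu(f)>0$ lifts to a shift invariant measure $\hat\mu$ on $\Sigma$ with $h_{\hat\mu}(\sigma)=h_\mu(f)$ and $\pi_*\hat\mu=\mu$.

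Next, for each hyperbolic MME $\mu$ homoclinically related to $\mathcal O$, I would show that the $\sigma$-ergodic lift $\hat\mu$ is supported on a single irreducible component of the TMS, and that this component is the \emph{same} for all MMEs homoclinically related to $\mathcal O$. The key mechanism is that transverse intersections $W^u(x)\pitchfork W^s(y)$ between generic points of $\mu_1$ and $\mu_2$ translate, via the coding, into admissible words connecting the supports of $\hat\mu_1$ and $\hat\mu_2$; symmetric transverse intersections give a word in the opposite direction, producing a loop in the graph of $\Sigma$ that forces the two lifts to live in the same irreducible component $\Sigma'\subset \Sigma$. Using Proposition~\ref{HoclinicClassRelation}, one can moreover pin this component down as the one containing the periodic code of $\mathcal O$.

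Finally, I would apply the Gurevich--Sarig theory for countable Markov shifts: on the irreducible subshift $(\Sigma',\sigma)$, any two shift-invariant probability measures of entropy equal to the Gurevich entropy must coincide (Sarig proved this provided that the Gurevich entropy is achieved, which is automatic here because $h_{\hat\mu_i}(\sigma)=h(f)$). Projecting back by $\pi$, this gives $\mu_1=\mu_2$, proving uniqueness. For the support statement, I would use that in an irreducible TMS the support of any MME equals the closure of the set of periodic orbits in $\Sigma'$; projecting via $\pi$, and using the density of periodic measures in $HC(\mathcal O)$ from Proposition~\ref{HoclinicClassRelation}, this closure coincides with $HC(\mathcal O)$. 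The hardest step in this plan is the second one: setting up the coding so that homoclinic relation on $M$ translates faithfully into irreducibility on $\Sigma$; this requires the delicate matching of stable/unstable Pesin manifolds with the cylinder structure of $\Sigma$, and is essentially the technical heart of \cite{BCS22}.
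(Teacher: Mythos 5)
The paper states this proposition as a direct citation of Buzzi--Crovisier--Sarig and does not reproduce a proof; your sketch is a faithful outline of the argument in \cite{BCS22} (Sarig/Ovadia coding to a countable Markov shift, irreducibility of the component carrying everything homoclinically related to $\mathcal O$, Gurevich uniqueness, and identification of the support via density of coded periodic orbits), so it matches the approach the paper relies on.
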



%
%

\subsection{Unstable entropy}

 \subsubsection{Unstable metric entropy}
 
Consider a diffeomorphism $f$ partially hyperbolic with $\dime(E^{uu})\geq 1$. Let $ \alpha $ be a partition of $ M $. We denote $ \alpha(x) $ as the element of $ \alpha $ containing the point $ x $. If we have two partitions $ \alpha $ and $ \beta $ such that $ \alpha(x) \subset \beta(x) $ for all $ x \in M $, we can write $ \alpha \geq \beta $ or $ \beta \leq \alpha $. If a partition $ \eta $ satisfies $ f^{-1}(\eta) \geq \eta $, we say that the partition is increasing.

A measurable partition $ \eta $ is  increasing and subordinate to $ \mathcal{F}^{uu} $ if it satisfies the following conditions:
\begin{itemize}
    \item[(a)] $ \eta(x) \subseteq \mathcal{F}^{uu}(x) $ for $\mu$-almost every $ x $.
    \item[(b)] $ f^{-1}(\eta) \geq \eta $ (increasing property).
    \item[(c)] $ \eta(x) $ contains an open neighborhood of $ x $ in $ \mathcal{F}^{uu}(x) $ for $\mu$-almost every $ x $.
\end{itemize}

The existence of measurable partitions that are increasing and subordinate to $ \mathcal{F}^{uu}$ was proved in \cite{LS82}.

\begin{definition}
    Consider a partially hyperbolic diffeomorphism $f$ such that  $\operatorname{dim}(E^{uu}) \geq 1 $, $\mu \in \mathcal{M}^e_f$ , and let $\eta$  be an increasing measurable partition subordinate to $ \cF^{uu} $. We define the unstable metric entropy of $ \mu $ as

    $$h^u_\mu(f) = H(f^{-1}\eta\mid \eta),$$
\end{definition}

This definition does not depend on the specific increasing measurable partition subordinate to $ \cF^{uu} $. For details, see \cite{LS82, HHW17, Yan21}.

\subsubsection{Unstable Topological Entropy and the Variational Principle}\label{ss.unstable-entropy}

We now introduce the concept of unstable topological entropy, first defined in \cite{SX10}. This quantity measures the dynamics complexity  along the unstable manifold.

Let $d^u$ be the metric induced by the Riemannian structure on the unstable manifold $W^{uu}$, and define

$$d_n^u(x, y) = \max_{0 \leq j \leq n-1} d^u\left(f^j(x), f^j(y)\right).$$

Given a point $x \in M$ and $\delta > 0$, let $W^{uu}(x, \delta)$ denote the open ball of radius $\delta$ centered at $x$ inside $W^{uu}(x)$ with respect to the metric $d^u$. Define $N^u(f, \epsilon, n, x, \delta)$ as the maximal number of points in $\overline{W^{uu}(x, \delta)}$ whose pairwise $d_n^u$-distances are at least $\epsilon$. Such a collection of points is called an \emph{$(n, \epsilon)$ u-separated set} of $\overline{W^{uu}(x, \delta)}$.
\begin{definition}
     The unstable topological entropy of $f$ on $M$ is given by:
\begin{equation*}
h^u(f) = \lim_{\delta \to 0} \sup_{x \in M} h^u\left(f, \overline{W^{uu}(x, \delta)}\right),
\end{equation*}
where
\begin{equation*}
h^u\left(f, \overline{W^{uu}(x, \delta)}\right) = \lim_{\epsilon \to 0} \limsup_{n \to \infty} \frac{1}{n} \log N^u(f, \epsilon, n, x, \delta).
\end{equation*}

\end{definition}

 Unstable topological entropy can also be defined equivalently, using $(n, \epsilon)$ u-spanning sets or open covers \cite{HHW17}.

 If the dimension of the stable bundle $E^{ss}_f$ is at least one, the \emph{stable entropy} is defined analogously as:
\[
h^s(f) := h^u(f^{-1}).
\]

 The unstable topological entropy relates to the metric unstable entropy via a variational principle. Specifically, \cite[Theorem D]{HHW17} states that for any $C^1$-partially hyperbolic diffeomorphism $f: M \to M$, we have:
\[
h^u(f) = \sup\{h_{\mu}^u(f) : \mu \in \mathcal{M}_f(M)\}
\quad \text{and} \quad 
h^u(f) = \sup\{h^u_{\nu}(f) : \nu \in \mathcal{M}^e_{f}(M)\}.
\]

 An alternative characterization of the unstable topological entropy involves the unstable volume growth, as studied by Hua, Saghin, and Xia in \cite{HSX08}, inspired by earlier work of Yomdin and Newhouse \cite{Yom87,New89}. It was shown in \cite[Theorem C]{HHW17} that the unstable topological entropy, as defined above, coincides with the unstable volume growth.

\subsection{Existence of measures for maximal Entropy}\label{subsebtion-dimEc1-mme}

Newhouse demonstrated that all $C^\infty$ diffeomorphisms possess a measure of maximal entropy \cite{New91}. However, as Misurewicz showed \cite{Mis73}, this property does not hold for diffeomorphisms with lower differentiability. While it is well-established that Anosov diffeomorphisms always have MME, partially hyperbolic systems, in general, may fail to admit MME \cite{B14}.

Let $f: M \to M$ be a homeomorphism defined on a compact metric space $M$. For a given $x \in M$ and $\varepsilon > 0$, define:
\[
\Gamma_\varepsilon(x) = \{y \in M : d(f^n(y), f^n(x)) < \varepsilon \text{ for all } n \in \mathbb{Z}\}.
\]

We say that $f$ is expansive at scale $\varepsilon$ if for every $x \in M$, $\Gamma_\varepsilon(x) = \{x\}$. Similarly, $f$ is entropy-expansive at scale $\varepsilon$ if for every $x \in M$, the topological entropy of $f$ restricted to $\Gamma_\varepsilon(x)$ satisfies:
$$
h(f|_{\Gamma_\varepsilon(x)}) = 0.
$$

Now, consider the function $h_{\cdot}(f)$ defined as:
$$
h_{\cdot}(f): \mathcal{M}_f \to \mathbb{R}, \quad \mu \mapsto h_{\mu}(f),
$$
where $\mathcal{M}_f$ denotes the space of $f$-invariant probability measures. Since $\mathcal{M}_f$ is compact in the weak-* topology, the upper semicontinuity of the metric entropy function ensures that $h_{\cdot}(f)$ attains a maximum. 

For homeomorphisms, Bowen proved in \cite{Bow72} that the metric entropy is upper semicontinuous whenever $(M, f)$ is an entropy-expansive system. Consequently, $f$ admits MMEs. 


\section{Invariant Manifolds with Uniform size}\label{section-PesinBlock}

Consider a diffeomorphism $f:M\to M$ with a dominated splitting $T M = E^{cs} \oplus E^{cu}$. For each $\ell \in \mathbb{Z}_+$ we consider  the set of points $x\in M$, $B_{\ell}^s(f,E^{cs})$, for which the following inequality holds:

$$\frac{1}{n} \sum_{j=0}^{n-1} \log \| Df^\ell (f^{\ell j}(x)) | E^{cs} \| < -1 \quad \text{for every } n \in \mathbb{Z}_+.$$

Similarly, define $B_{\ell}^u(f,E^{cu})$ as the set  of points $x\in M$ such that 

$$\frac{1}{n} \sum_{j=0}^{n-1} \log \| Df^{-\ell} (f^{-\ell j}(x)) | E^{cu} \| < -1 \quad \text{for every } n \in \mathbb{Z}_+.
$$
We define the $\ell$-Pesin block as

$$B_{\ell}(f,E^{cs},E^{cu}) = B_{\ell}^s(f,E^{cs}) \cap B_{\ell}^u(f,E^{cu}).$$
For simplicity, when there is no confusion on which decomposition we are dealing with, we write only $B^*_\ell(f)$.
Note that $B_{\ell}^u(f)=B_{\ell}^s(f^{-1})$.

\begin{theorem}\label{Invariant-manifolds}
    Consider a diffeomorphism $f: M \to M$ with a dominated splitting $T M = E^{cs} \oplus E^{cu}$. For each $\ell \in \mathbb{Z}_+$, there exist constants $r = r(\ell) > 0$, $C=C(\ell)$, $\chi=\chi(\ell)$, $\varepsilon=\varepsilon(\ell)$ and $\delta = \delta(\ell) > 0$ such that:
    \begin{enumerate}
        \item If $x \in B_{\ell}(f)$, then $W^s_{C,\chi,\varepsilon}(f, x)$ and $W^u_{C,\chi,\varepsilon}(f, x)$ are disks of radius bigger than $r$ centered at $x$.
        \item If $x, y \in B_{\ell}(f)$ and $d(x, y) < \delta$, then $W^s_{C,\chi,\varepsilon}(f, x)$ and $W^u_{C,\chi,\varepsilon}(f, x)$ intersect transversely.
    \end{enumerate}
    
    Moreover, the same constants $r,C,\chi,\varepsilon$ applies to any diffeomorphism $g$ that is sufficiently $C^1$-close to $f$ (depending on $\ell$). Additionally, $W^s_{C,\chi,\varepsilon}(g, x)$ and $W^u_{C,\chi,\varepsilon}(g, x)$ vary continuously with respect to $g$ in the $C^1$-topology and with respect to $x \in B_{\ell}(g)$ in the metric $d$.
\end{theorem}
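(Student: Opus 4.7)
The plan is to convert the Pliss-type Ces\`aro condition defining $B_\ell(f)$ into pointwise uniform exponential contraction along orbits, and then to run a single Hadamard--Perron graph-transform construction whose parameters depend only on $\ell$ and on $C^1$-bounds of $f$ that are stable under small perturbations.

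First, I would upgrade the defining inequality to a multiplicative bound. By the chain rule and submultiplicativity of operator norms, for $x\in B_\ell^s(f)$ and every $n\geq 1$,
\begin{equation*}
\|Df^{\ell n}|_{E^{cs}(x)}\| \;\leq\; \prod_{j=0}^{n-1}\|Df^\ell|_{E^{cs}(f^{\ell j}(x))}\| \;<\; e^{-n}.
\end{equation*}
Interpolating across the $\ell$ intermediate iterates via the uniform bound $\sup_M\|Df\|$ yields $\|Df^m|_{E^{cs}(x)}\|\leq C(\ell)e^{-m/\ell}$ for every $m\geq 0$, with $C(\ell)=e\cdot(\sup_M\|Df\|)^{\ell-1}$. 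A symmetric argument gives $\|Df^{-m}|_{E^{cu}(x)}\|\leq C(\ell)e^{-m/\ell}$ on $B_\ell^u(f)$. Thus, on $B_\ell(f)$, both invariant bundles are uniformly exponentially contracted at rate $\chi=\chi(\ell)$ with constant $C=C(\ell)$ depending only on $\ell$ and $\|Df^{\pm 1}\|_{C^0}$.

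Second, I would apply the classical graph-transform argument tailored to the dominated setting. Using domination, choose narrow cone fields $\cC^{cs}_\alpha,\cC^{cu}_\alpha$ of opening $\alpha=\alpha(\ell)$ around $E^{cs}$ and $E^{cu}$; then $\cC^{cu}_\alpha$ is forward-$Df$-invariant, $\cC^{cs}_\alpha$ is backward-$Df$-invariant, and the two cones meet at a uniformly positive angle. For $x\in B_\ell^s(f)$, iterate the graph transform of $f^\ell$ on the Banach space of $C^1$ graphs over $E^{cs}(f^{\ell j}(x))$ of slope $\leq\tan\alpha$ and size $r=r(\ell)$; the uniform contraction of the previous step makes this a genuine contraction, whose fixed point is a $C^1$ disk of radius $>r$ tangent to $E^{cs}(x)$ along which the dynamics contracts at rate $\chi$ with constant $C$. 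Choosing $\varepsilon=\varepsilon(\ell)$ so that this disk lies in $B_\varepsilon(x)$, it coincides with $W^s_{C,\chi,\varepsilon}(f,x)$; applying the same argument to $f^{-1}$ yields $W^u_{C,\chi,\varepsilon}(f,x)$ on $B_\ell^u(f)$. The transversality claim (2) follows by a standard local product structure: the tangent spaces of the constructed disks lie in narrow cones around two continuous bundles meeting at a uniformly positive angle, so for a sufficiently small $\delta=\delta(\ell)>0$, $d(x,y)<\delta$ forces $W^s_{C,\chi,\varepsilon}(f,x)$ and $W^u_{C,\chi,\varepsilon}(f,y)$ to intersect transversely inside $B_\varepsilon(x)$.

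Finally, for the robustness claim, the dominated splitting is $C^1$-open: for $g$ in a sufficiently small $C^1$-neighborhood $\cV$ of $f$, the bundles $E^{cs}_g,E^{cu}_g$ exist, vary $C^0$-continuously with $g$, and can be taken to lie in the same cone fields $\cC^{cs}_\alpha,\cC^{cu}_\alpha$; one may also shrink $\cV$ so that $\sup_M\|Dg^{\pm 1}\|$ is uniformly bounded. The multiplicative bound of the first step is internal to $g$ and yields the same constants $r,C,\chi,\varepsilon$ on $B_\ell(g)$ for every $g\in\cV$. Since the graph transform is then a uniform contraction over the parameter family indexed by $g\in\cV$ and $x\in B_\ell(g)$ on the common space of Lipschitz graphs, its fixed point depends continuously on $(g,x)$ in the $C^1\times d$ topology, giving the stated continuity. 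The main obstacle I expect is precisely this simultaneous uniformity in $g$ and $x$, since the block $B_\ell(g)$ itself changes with $g$; overcoming it requires phrasing every step in terms of ingredients uniform on $\cV$ --- the common cone fields, a common $C^1$-bound for $Dg$, and the Pliss-type inequality defining $B_\ell(g)$ --- so that the construction never appeals to Lyapunov regularity or to estimates specific to a single orbit.
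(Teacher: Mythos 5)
The paper's own proof is a one-line citation to Hirsch--Pugh--Shub and to the corresponding results in \cite[Theorem 4.1]{AB12} and \cite[Theorem 3.5]{AV20}, all of which establish the uniform-size invariant manifolds exactly along the lines you describe: upgrading the Ces\`aro (Pliss-type) inequality defining $B_\ell$ to cumulative exponential control along the orbit, and then running the cone-field/graph-transform machinery adapted to a dominated splitting, with constants that persist under $C^1$-perturbation. Your proposal is a correct sketch of that standard construction, so it is essentially the same approach as the cited proof.
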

\begin{proof}
    The proof is a consequence of \cite{HPS70}. See also \cite[Theorem 4.1]{AB12} and \cite[Theorem 3.5.]{AV20}
\end{proof}

\subsection{The Size of the Pesin Blocks for $\alpha$--hyperbolic measures}

We recall that  $$\lambda^{cs}(x) = \lim_{\ell \to +\infty} \frac{1}{\ell} \log \| Df^\ell | E^{cs}(x) \|$$
when the limit exists.

We set 

\begin{equation}\label{Hyperbolic-stable-points}
  \Lambda_{\alpha}(f,E^{cs}):=\Lambda^{cs}_\alpha(f):=\left\{ x\in M: \lambda^{cs}(x)\leq-\alpha\right\},  
\end{equation}
\begin{equation}\label{Hyperbolic-unstable-points}
  \Lambda_{\alpha}(f,E^{cu}):=\Lambda^{cu}_\alpha(f):=\left\{ x\in M: \lambda^{cu}(x)\geq\alpha\right\},  
\end{equation}

and
\begin{equation}\label{Hyperolic-points}
  \Lambda_{\alpha}(f,E^{cs},E^{cu}):=\Lambda_{\alpha}(f):=\Lambda^{cs}_\alpha(f) \cap \Lambda^{cu}_\alpha(f).  
\end{equation}

The next lemma says that if $\lambda^{cs}$ is negative for many points with respect  to $\mu$, then $B_{\ell}^s(f)$ has large measure when $\ell$ is large enough. This Lemma is similar to \cite[Lemma 4.8]{AB12}

\begin{lemma}\label{better-Size-of-PesinBlocks}
Let $\mu \in \mathcal{M}_f$. Assume that $\eta > 0$, $\alpha > 0$, $b>0$ and $\ell \in \mathbb{Z}_+$ satisfy the following conditions:

\begin{equation}\label{C1}
\mu \left\{ x \in M : \lambda^{cs}(x) > -\alpha \right\} < \eta,    
\end{equation}

\begin{equation}\label{C2}
    \ell > 1/(\alpha b),
\end{equation}
    

    \begin{equation}\label{C3}
\int_{\left\lbrace \frac{1}{\ell} \log \| Df^\ell | E^{cs}\| >-\alpha\right\rbrace \cap \left\lbrace \lambda^{cs} \leq -\alpha\right\rbrace}\left( \frac{1}{\ell} \log \| Df^\ell | E^{cs}\| +\alpha\right) d\mu <  b\alpha.    
\end{equation}

Then:
$$\mu(M\setminus B_{\ell}^s(f)) < 2b + \eta.$$

\end{lemma}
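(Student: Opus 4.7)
The plan is to apply Hopf's maximal ergodic inequality to $g := f^\ell$ (which preserves $\mu$) on the $g$-invariant set $X := \Lambda_\alpha^{cs}(f) = \{\lambda^{cs}\leq-\alpha\}$, using the function
$$h(x) := \tilde\psi(x) + \alpha, \qquad \tilde\psi(x) := \frac{1}{\ell}\log\|Df^\ell(x)|E^{cs}\|.$$
The key rewriting is that $B_\ell^s(f)$ coincides with the set where $\frac{1}{n}\sum_{j=0}^{n-1}\tilde\psi(g^jx) < -1/\ell$ for every $n\geq 1$ (obtained from the original definition by dividing by $\ell$).

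First I would split the complement as $M\setminus B_\ell^s(f) = F_1 \sqcup F_2$, where $F_1 := (M\setminus B_\ell^s(f))\cap X^c$ and $F_2 := (M\setminus B_\ell^s(f))\cap X$. Hypothesis (C1) gives $\mu(F_1)\leq\mu(X^c)<\eta$, so the problem reduces to showing $\mu(F_2)<2b$. Since the assertion is trivial if $b>1/2$ (then $2b+\eta>1$), I will assume $b\leq 1/2$, in which case (C2) yields $\alpha - 1/\ell > \alpha(1-b) \geq \alpha/2 > 0$, providing a strictly positive threshold for Hopf's inequality.

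Applying Hopf's maximal ergodic inequality on the measure-preserving system $(X, g|_X, \mu|_X)$ to $h$, for every $0<\lambda<\alpha-1/\ell$ one has
$$\mu\!\left\{x\in X : \sup_{n\geq 1}\frac{1}{n}\sum_{j=0}^{n-1}h(g^jx) > \lambda\right\} \leq \frac{1}{\lambda}\int_{X\cap\{h>0\}}h\,d\mu.$$
Since $F_2$ is the decreasing intersection of these strict-inequality events as $\lambda\uparrow\alpha-1/\ell$, continuity of measure along decreasing intersections passes to the limit and gives
$$\mu(F_2)\leq\frac{1}{\alpha-1/\ell}\int_{\{\tilde\psi>-\alpha\}\cap\{\lambda^{cs}\leq-\alpha\}}(\tilde\psi+\alpha)\,d\mu.$$
Now (C3) bounds the integral by $b\alpha$, while (C2) gives $\alpha - 1/\ell > \alpha(1-b)$, whence
$$\mu(F_2)<\frac{b\alpha}{\alpha(1-b)}=\frac{b}{1-b}\leq 2b,$$
the last inequality using $b\leq 1/2$. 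Combining with $\mu(F_1)<\eta$ completes the proof.

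The only technical subtlety will be the passage from the strict-inequality level sets to which the Hopf maximal inequality applies directly, to the non-strict threshold defining $F_2$; this is handled by the $\lambda \uparrow \alpha-1/\ell$ limit together with continuity of measure along decreasing intersections. All remaining estimates are direct arithmetic combinations of the three hypotheses.
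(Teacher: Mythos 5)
Your proof is correct and follows essentially the same route as the paper's: both apply the maximal ergodic theorem for $f^\ell$ on the invariant set $\{\lambda^{cs}\leq-\alpha\}$ to the function built from $\frac{1}{\ell}\log\|Df^\ell|E^{cs}\|$, and combine it with hypotheses \eqref{C1}--\eqref{C3} in the same way. The only organizational difference is that you invoke the threshold (level-set) form of Hopf's inequality at $\lambda=\alpha-1/\ell$, which yields $\mu(F_2)<b/(1-b)$ and forces the harmless WLOG reduction $b\leq 1/2$, whereas the paper uses the raw positivity form $\int_{\{\varphi^*\geq-1\}\cap X}(\varphi+1)\,d\mu\geq 0$ and rearranges algebraically to reach $\mu(F_2)\leq 2b$ directly without that reduction; both are valid.
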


\begin{proof}

Let $\eta > 0$, $\alpha > 0$, $b>0$ and $\ell \in \mathbb{Z}_+$ as in above Lemma. For every $x\in M$ we define
    \begin{equation}
        \varphi(x)=\log\|Df^\ell(x)|E^{cs}(x)\| \quad \mbox{ and } \quad \varphi^*(x)=\max_{n\geq 1}\frac{1}{n}\sum_{j=0}^{n-1}\varphi(f^{\ell j}(x).
    \end{equation}
Thus, $B^s_\ell(f)=\{x\in M :\varphi^*(x)<-1\}$.

 By the Maximal Ergodic Theorem applied to $f^\ell$ in the invariant set $\{x\in M: \lambda^{cs}(x)\leq-\alpha\}$ we get that

$$
\int_{\left\{\varphi^{*} \geq-1\right\} \cap\left\{\lambda^{c s} \leq-\alpha\right\}}(\varphi+1) d \mu \geq 0 .
$$

Therefore,
$$
\begin{aligned}
0 & \leq \int_{\left\{\varphi^{*} \geq-1\right\} \cap\left\{\lambda^{c s} \leq-\alpha\right\}} \frac{\varphi+1}{\ell} d \mu  \\
& \leq b\alpha+\int_{\left\{\varphi^{*} \geq-1\right\} \cap\left\{\lambda^{c s} \leq-\alpha\right\}} \frac{\varphi}{\ell} d \mu  \mbox{      (by \eqref{C2})}\\
& \leq b \alpha+\int_{\left\{\varphi^{*} \geq-1\right\} \cap\left\{\lambda^{c s} \leq-\alpha\right\}} (\frac{\varphi}{\ell}+\alpha -\alpha) d \mu  \\
& \leq b\alpha+\int_{\left\{\varphi^{*} \geq-1\right\} \cap\left\{\lambda^{c s} \leq-\alpha\right\}} -\alpha d \mu+\int_{\left\{\varphi/\ell+\alpha> 0\right\}\cap \left\{\lambda^{cs}\leq -\alpha\right\}} (\frac{\varphi}{\ell}+\alpha) d\mu  \mbox{     (by \eqref{C3})}\\
& \leq 2b\alpha-\alpha \mu\left(\left\{\varphi^{*} \geq-1\right\} \cap\left\{\lambda^{c s} \leq-\alpha\right\}\right).
\end{aligned}
$$

Thus,  $\mu\left(\left\{\varphi^{*} \geq-1\right\} \cap\left\{\lambda^{c s} \leq-\alpha\right\}\right) \leq 2b$ and applying  \eqref{C1} we get that

$$\mu(\left\{\varphi^{*} \geq-1\right\})< 2b + \eta,$$  as we wanted to show.
    
\end{proof}

\begin{lemma}\label{Uniform Pesin Blocks}
    Let $f:M\to M$ and $f_n: M \to M$ be diffeomorphisms of a compact manifold $M$ admitting a dominated splitting $E^{cs} \oplus E^{cu}$, where $f_n$ converges to $f$ in the $C^1$ topology. Let $\alpha > 0$ and suppose that $\{\mu_n\}_{n=1}^\infty \subset \cM_{f_n}^e$ is a sequence of $\alpha$-hyperbolic measures with $\operatorname{Index}(\mu_n) = \dim(E^{cs})$, converging to $\mu \in \cM_f$ such that $\mu(\Lambda^{cs}_\alpha(f)) = 1$. Then, for any $\varepsilon > 0$, there exist constants $\ell', N \in \mathbb{Z}^+$ such that for all $n \geq N$ and $\ell \geq \ell'$, both $\mu_n(B_{\ell}^s(f))$ and $\mu(B_{\ell}^s(f))$ are greater than $1 - \varepsilon$.
\end{lemma}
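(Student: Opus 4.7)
The strategy is to apply Lemma~\ref{better-Size-of-PesinBlocks} separately to $(f,\mu)$ and to each $(f_n,\mu_n)$, using Kingman's sub-additive ergodic theorem together with the $C^1$-closeness $f_n\to f$ and the weak-$*$ convergence $\mu_n\to\mu$ to obtain the required uniformity.

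\emph{The $\mu$-side.} Since each $\mu_n$ is $f_n$-invariant and $f_n\to f$ in $C^1$, the weak-$*$ limit $\mu$ is $f$-invariant. By hypothesis $\mu(\Lambda^{cs}_\alpha(f))=1$, so condition (C1) of Lemma~\ref{better-Size-of-PesinBlocks} is free with $\eta=0$. Setting $\phi_\ell(x):=\tfrac{1}{\ell}\log\|Df^\ell(x)|E^{cs}_f\|$, Kingman's theorem applied to the sub-additive cocycle $x\mapsto\log\|Df|E^{cs}_f(x)\|$ yields $\phi_\ell\to\lambda^{cs}(f,\cdot)$ in $L^1(\mu)$; since $(\phi_\ell+\alpha)^+\le|\phi_\ell-\lambda^{cs}|$ on the full-measure set $\{\lambda^{cs}\le-\alpha\}$, the integral in (C3) is bounded by $\|\phi_\ell-\lambda^{cs}\|_{L^1(\mu)}$, hence small for all $\ell$ large. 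Lemma~\ref{better-Size-of-PesinBlocks} then gives $\mu(B^s_\ell(f))>1-\varepsilon$ for every $\ell\ge\ell_1$.

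\emph{The $\mu_n$-side.} Apply Lemma~\ref{better-Size-of-PesinBlocks} to $(f_n,\mu_n)$. The $\alpha$-hyperbolicity of $\mu_n$ together with $\operatorname{index}(\mu_n)=\dim E^{cs}$ and the dominated splitting forces the $\dim E^{cs}$ smallest Lyapunov exponents to lie along $E^{cs}_{f_n}$ and to be $\le-\alpha$ $\mu_n$-a.e., so $\mu_n(\Lambda^{cs}_\alpha(f_n))=1$ and (C1) is again free. Set $\phi^n_\ell(x):=\tfrac{1}{\ell}\log\|Df_n^\ell(x)|E^{cs}_{f_n}\|$. Because $f_n\to f$ in $C^1$ and the dominated splittings $E^{cs}_{f_n}$ vary continuously (in $C^0$) with $f_n$, we have $\phi^n_\ell\to\phi_\ell$ uniformly on $M$ for each fixed $\ell$; combined with $\mu_n\to\mu$ weakly,
\[
\int_M (\phi^n_\ell+\alpha)^+\,d\mu_n \ \longrightarrow \ \int_M (\phi_\ell+\alpha)^+\,d\mu,
\]
and the right-hand side is already small by the previous step. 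Thus (C3) holds, and Lemma~\ref{better-Size-of-PesinBlocks} yields $\mu_n(B^s_\ell(f_n))>1-\varepsilon$ for $n\ge N$. To replace $B^s_\ell(f_n)$ by $B^s_\ell(f)$, exploit the $C^0$-closeness of $\phi^n_\ell$ to $\phi_\ell$ and of finite $f_n$-orbits to $f$-orbits: the $\mu_n$-mass of $B^s_\ell(f)\triangle B^s_\ell(f_n)$ tends to zero as $n\to\infty$ for each fixed $\ell$.

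\emph{Main obstacle.} The hardest point is to maintain these bounds \emph{uniformly} in $\ell\ge\ell'$ and $n\ge N$ simultaneously. For $\mu$ this is free: the $L^1(\mu)$-convergence in Kingman gives $\|\phi_\ell-\lambda^{cs}\|_{L^1(\mu)}<b\alpha$ simultaneously for every $\ell\ge\ell_1$. For $\mu_n$ the uniform $C^0$-convergence $\phi^n_\ell\to\phi_\ell$ is only pointwise in $\ell$, and the $C^0$-discrepancy between $Df^\ell$ and $Df_n^\ell$ grows with $\ell$. This is circumvented by combining control at a single large $\ell_1$ with the sub-multiplicative inclusion $B^s_\ell(f_n)\subset B^s_{k\ell}(f_n)$ for $k\in\mathbb{Z}^+$, which follows from $Df_n$-invariance of $E^{cs}_{f_n}$ and the inequality $\log\|Df_n^{k\ell}(x)|E^{cs}\|\le\sum_{i=0}^{k-1}\log\|Df_n^\ell(f_n^{\ell i}(x))|E^{cs}\|$, thereby extending the bound to all multiples of $\ell_1$ at once.
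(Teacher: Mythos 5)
Your overall strategy --- apply Lemma~\ref{better-Size-of-PesinBlocks} to $(f,\mu)$ and to each $(f_n,\mu_n)$, controlling condition \eqref{C3} via the $L^1$-convergence of $\frac{1}{\ell}\log\|Df^\ell|E^{cs}\|$ to $\lambda^{cs}$ and then transferring to $\mu_n$ by weak-$*$ convergence of measures and $C^0$-closeness of the cocycles --- is exactly the route the paper follows, and the first two parts of your argument are sound.

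The step that does not hold up is the final replacement of $B^s_\ell(f_n)$ by $B^s_\ell(f)$. Both sets are defined by infinitely many Birkhoff conditions along the $f$- (resp.\ $f_n$-) orbit; even when $Df^\ell|E^{cs}$ and $Df_n^\ell|E^{cs}$ are $C^0$-close, the iterates $f^{\ell m}(x)$ and $f_n^{\ell m}(x)$ can diverge exponentially in $m$, so the indicator functions of $B^s_\ell(f)$ and $B^s_\ell(f_n)$ need not be close pointwise, and there is no reason for $\mu_n\big(B^s_\ell(f)\triangle B^s_\ell(f_n)\big)$ to tend to $0$. Worse, $\mu_n$ is only $f_n$-invariant, so the maximal-ergodic estimate that underpins Lemma~\ref{better-Size-of-PesinBlocks} gives no control over $B^s_\ell(f)$ against $\mu_n$. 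The estimate your argument (and the paper's) actually produces, and the one used downstream --- see the proof of Lemma~\ref{p(g)SimMuj}, where the block is $B_\ell(f_n,\cdot)$ --- is $\mu_n\big(B^s_\ell(f_n)\big)>1-\varepsilon$. This suffices for all the applications because Theorem~\ref{Invariant-manifolds} provides uniform constants $r,C,\chi,\varepsilon$ for every $g$ that is $C^1$-close to $f$. You should stop there instead of attempting the conversion.

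On the uniformity in $\ell$: you are right to flag this; the $N$ produced by the weak-$*$/$C^0$ step a priori depends on $\ell$, and the paper's proof fixes an $\ell$ before choosing $N$ yet asserts the conclusion for all $\ell>\ell''$. Your inclusion $B^s_\ell\subset B^s_{k\ell}$ (sub-multiplicativity along the invariant bundle) is a clean repair: it promotes the estimate from one $\ell_1$ to all multiples $k\ell_1$ with the same $N$. That is weaker than ``all $\ell\geq\ell'$'' as stated, but it is exactly what the applications use, since there one only needs a single common $\ell$ at which both the stable and the unstable block have large measure.
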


\begin{proof}
    Consider the sequence of measures $\{\mu_n\}_{n=1}^\infty$ converging to $\mu$ as specified in the statement of the lemma, and let $\varepsilon > 0$.

    Since $\mu(\Lambda^{cs}_\alpha(f)) = 1$, we have:
    \begin{equation}\label{hold-C1}
        \mu \left( \left\{ x \in M : \lambda^{cs}(x) > -\alpha \right\} \right) = 0 < \frac{\varepsilon}{2}.
    \end{equation}

    Additionally, there exists an integer $\ell'$ such that for every $\ell \geq \ell'$:
    \begin{equation}\label{LEclose}
        \int \left| \frac{1}{\ell} \log \| Df^\ell | E^{cs} \| - \lambda^{cs} \right| d\mu < \frac{\varepsilon}{4}.
    \end{equation}

    We fix $\ell'' > \ell'$ such that :
    \begin{equation}\label{hold-C2}
        \ell'' > \frac{1}{\frac{\varepsilon}{4}}.
    \end{equation}

    Now, observe that for $\mu$-a.e. $x\in M$:
    $$
    \begin{aligned}
        \frac{1}{\ell} \log \| Df^\ell | E^{cs} \| + \alpha &= \frac{1}{\ell} \log \| Df^\ell | E^{cs} \| - \lambda^{cs} + \lambda^{cs} + \alpha \\
        &\leq \frac{1}{\ell} \log \| Df^\ell | E^{cs} \| - \lambda^{cs}, \quad \text{since } \lambda^{cs} + \alpha \leq 0.
    \end{aligned}
    $$

    Therefore, by \eqref{LEclose}, for each $\ell>\ell'$ we have
    \begin{equation}\label{hold-C3}
        \int_{\left\lbrace \frac{1}{\ell} \log \| Df^\ell | E^{cs} \| > -\alpha \right\rbrace \cap \left\lbrace \lambda^{cs} \leq -\alpha \right\rbrace} \left( \frac{1}{\ell} \log \| Df^\ell | E^{cs} \| + \alpha \right) d\mu < \frac{\varepsilon}{4}.
    \end{equation}

    Now, take $\eta = \frac{\varepsilon}{2}$ and $b = \frac{\varepsilon}{4}$ in Lemma \ref{better-Size-of-PesinBlocks}. By \eqref{hold-C1}, \eqref{hold-C2}, and \eqref{hold-C3}, we conclude that for each $\ell>\ell''$ it holds
    $$
    \mu(B_\ell^s(f)) > 1 - \varepsilon.
    $$

    Since $\mu_n(\Lambda^{cs}_\alpha(f))=1$ and $\mu(\Lambda^{cs}_\alpha(f))=1$, we also have:
    \begin{equation}\label{hold-C1'}
        \mu_n \left( \left\{ x \in M : \lambda^{cs}(x) > -\alpha \right\} \right) = 0 < \frac{\varepsilon}{2}.
    \end{equation}


    Define the continuous map $\tilde{\varphi}: M \to \mathbb{R}$ by:
    
    $$\tilde{\varphi}(x) =\max\{\frac{1}{\ell} \log \| Df^\ell | E^{cs} \| + \alpha,0\}$$
    and 
    $$\tilde{\varphi}_n(x) =\max\{\frac{1}{\ell} \log \| Df_n^\ell | E^{cs} \| + \alpha,0\}.$$
    
 Using again that  $\mu(\Lambda^{cs}_\alpha(f))=1$ and $\mu_n(\Lambda^{cs}_\alpha(f))=1$, we get that for $\mu$ and each $\mu_n$, the sets $\left\lbrace \frac{1}{\ell} \log \| Df^\ell | E^{cs} \| > -\alpha \right\rbrace \cap \left\lbrace \lambda^{cs} \leq -\alpha \right\rbrace$ and $\left\lbrace \frac{1}{\ell} \log \| Df^\ell | E^{cs} \| > -\alpha \right\rbrace$ have the same measure, which implies that
    $$ 
    \int_{\left\lbrace \frac{1}{\ell} \log \| Df^\ell | E^{cs} \| > -\alpha \right\rbrace \cap \left\lbrace \lambda^{cs} \leq -\alpha \right\rbrace} \left( \frac{1}{\ell} \log \| Df^\ell | E^{cs} \| + \alpha \right) d\mu = \int \tilde{\varphi} \, d\mu,
    $$
    and similarly,
    $$
    \int_{\left\lbrace \frac{1}{\ell} \log \| Df^\ell | E^{cs} \| > -\alpha \right\rbrace \cap \left\lbrace \lambda^{cs} \leq -\alpha \right\rbrace} \left( \frac{1}{\ell} \log \| Df^\ell | E^{cs} \| + \alpha \right) d\mu_n = \int \tilde{\varphi}_n \, d\mu_n.
    $$

    Since $\mu_n$ converges to $\mu$ in the weak$^*$ topology and $Df^\ell_n|E^{cs}$ converges to $Df^\ell|E^{cs}$ in the $C^0$ topology, there exists $N > 0$ such that for every $n \geq N$,
    \begin{equation}\label{hold-c3'}
        \int_{\left\lbrace \frac{1}{\ell} \log \| Df^\ell | E^{cs} \| > -\alpha \right\rbrace \cap \left\lbrace \lambda^{cs} \leq -\alpha \right\rbrace} \left( \frac{1}{\ell} \log \| Df^\ell | E^{cs} \| + \alpha \right) d\mu_n < \frac{\varepsilon}{4}.
    \end{equation}

    Thus, by Lemma \ref{better-Size-of-PesinBlocks}, together with \eqref{hold-C1}, \eqref{hold-C1'}, and \eqref{hold-c3'}, we conclude that for every $n \geq N$ and $\ell>\ell''$,
    $$
    \mu_n(B_\ell^s(f)) > 1 - \varepsilon,
    $$
    as required.
\end{proof}

\section{Continuity of Entropy}

Let $ p \in M $ be a hyperbolic periodic point of a diffeomorphism $f: M \to M $. A point $ q \in W^s(p) \cap W^u(p)$, distinct from $p$, is called a homoclinic point associated with $p$. The homoclinic point $q$ is said to be transverse if  
$$
T_qM = T_qW^u(p) + T_qW^s(p).
$$  
In this case, we denote it as $q \in W^s(p) \pitchfork W^u(p)$.

We say that $f$ has a homoclinic tangency if there exists a non-transverse homoclinic point associated to some hyperbolic periodic point. The set of $C^r$ diffeomorphisms that have some homoclinic tangency will be denoted by $\operatorname{HT}^r(M)$. We say that $f\in \Diff^1(M)$ is away from tangencies if $f\in \Diff^{1}(M)\setminus \overline{\operatorname{HT^1(M)}}$. 

\begin{remark}\label{f is away from tangeccies}
    Note that if  $f: M \to M$ is a diffeomorphism on a compact manifold $M$ admitting a dominated splitting $E^{ss} \oplus E_1 \oplus \cdots \oplus E_k \oplus E^{uu}$, where each $E_i$ is one-dimensional,  then $f\in \Diff^{1}\setminus \overline{\operatorname{HT^1}(M)}$.
\end{remark}
\begin{proposition}\label{upper-semicontinuous, entropy expansive}

Let $f_n: M \to M$, $n\in \mathbb{N}$, be a sequence of homeomorphisms converging to $f: M \to M$, where $M$ is a compact metric space, and suppose each $f_n$ and $f$ are entropy expansive at the same scale $\delta > 0$. If $\mu_n \subset \cM_{f_n}$ converges to $\mu \in \cM_f$, then:
$$
\limsup_{n \to \infty} h_{\mu_n}(f_n) \leq h_\mu(f).
$$
\end{proposition}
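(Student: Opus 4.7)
The approach is to adapt Bowen's classical proof that entropy-expansive maps have upper semicontinuous entropy, carrying it out with both the dynamics and the measure varying. By the version of Bowen's theorem used in \cite{Bow72}, whenever $g\colon M\to M$ is entropy expansive at scale $\delta$, one has $h_\nu(g) = h_\nu(g,\xi)$ for every $g$-invariant probability measure $\nu$ and every finite Borel partition $\xi$ with $\operatorname{diam}(\xi) < \delta$. I would fix, once and for all, such a partition $\xi = \{C_1,\ldots,C_m\}$ that additionally satisfies $\mu(\partial C_i) = 0$ for every $i$; this is possible by covering $M$ with small balls whose spheres carry no $\mu$-mass (uncountably many radii are available) and then disjointifying.

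Write $\xi_g^{(k)} := \bigvee_{j=0}^{k-1} g^{-j}\xi$. Subadditivity of entropy, together with the previous identity applied to each $f_n$, gives
$$
h_{\mu_n}(f_n) \;=\; h_{\mu_n}(f_n,\xi) \;\leq\; \frac{1}{k}\,H_{\mu_n}\!\bigl(\xi_{f_n}^{(k)}\bigr), \qquad k \geq 1.
$$
If for each fixed $k$ one can prove the convergence
$$
\lim_{n\to\infty} H_{\mu_n}\!\bigl(\xi_{f_n}^{(k)}\bigr) \;=\; H_\mu\!\bigl(\xi_f^{(k)}\bigr),
$$
then taking $\limsup_{n}$ above and afterwards letting $k\to\infty$ yields
$$
\limsup_{n\to\infty} h_{\mu_n}(f_n) \;\leq\; \lim_{k\to\infty}\frac{1}{k}\,H_\mu\!\bigl(\xi_f^{(k)}\bigr) \;=\; h_\mu(f,\xi) \;=\; h_\mu(f),
$$
which is the desired inequality.

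The remaining task, convergence of the entropies of the joined partitions, reduces via continuity of $t\mapsto -t\log t$ on $[0,1]$ to atomwise convergence. For each multi-index $\alpha \in \{1,\ldots,m\}^k$ set
$$
A_n^\alpha \;=\; \bigcap_{j=0}^{k-1} f_n^{-j}C_{\alpha_j}, \qquad A^\alpha \;=\; \bigcap_{j=0}^{k-1} f^{-j}C_{\alpha_j}.
$$
Since $\mu$ is $f$-invariant and $\mu(\partial C_i)=0$, one has $\mu(\partial f^{-j}C_i)=\mu(f^{-j}\partial C_i)=0$, and hence $\mu(\partial A^\alpha)=0$. Because $M$ is compact and $f_n\to f$ uniformly in $C^0$, also $f_n^{-1}\to f^{-1}$ uniformly, so each symmetric difference $f_n^{-j}C_{\alpha_j}\triangle f^{-j}C_{\alpha_j}$ lies in an arbitrarily thin neighborhood of $\partial f^{-j}C_{\alpha_j}$ for $n$ large. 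Sandwiching $A^\alpha$ between closed and open $\mu$-continuity sets and invoking $\mu_n\to\mu$ weakly$^*$ then delivers $\mu_n(A_n^\alpha)\to\mu(A^\alpha)$, and summing over $\alpha$ gives the required convergence of $H$.

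The main obstacle I expect is precisely this last step: the atoms $A_n^\alpha$ vary with $n$ through the pull-backs by $f_n^{-j}$ while the measures $\mu_n$ are also moving, so the Portmanteau theorem does not apply off the shelf. The remedy is the two-pronged preparation described above, namely choosing $\xi$ with $\mu$-null boundary (so that $A^\alpha$ is a $\mu$-continuity set) and exploiting the uniform convergence $f_n^{\pm 1}\to f^{\pm 1}$ to trap $A_n^\alpha$ between inner and outer neighborhoods of $A^\alpha$ whose $\mu$-masses tend to $\mu(A^\alpha)$.
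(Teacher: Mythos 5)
Your proof is correct, and it reaches the same conclusion as the paper's by a genuinely different technical route. Both arguments share the overall frame: invoke Bowen's theorem to replace $h_{\mu_n}(f_n)$ by $h_{\mu_n}(f_n,\xi)$ for a partition $\xi$ with $\operatorname{diam}\xi<\delta$, then reduce the problem to showing that $H_{\mu_n}\bigl(\bigvee_{j<k}f_n^{-j}\xi\bigr)$ converges to $H_\mu\bigl(\bigvee_{j<k}f^{-j}\xi\bigr)$ for each fixed $k$. What differs is the proof of that convergence. You build in the extra property $\mu(\partial C_i)=0$ from the start, so that each atom $A^\alpha$ is a $\mu$-continuity set (using $f$-invariance of $\mu$ to push the null boundary through pullbacks). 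You then split $|\mu_n(A_n^\alpha)-\mu(A^\alpha)|$ into $|\mu_n(A_n^\alpha)-\mu_n(A^\alpha)|$ and $|\mu_n(A^\alpha)-\mu(A^\alpha)|$: the second converges by Portmanteau, and the first is controlled because $A_n^\alpha\triangle A^\alpha$ lies (for $n$ large, using $f_n^j\to f^j$ in $C^0$) in a closed $\epsilon$-thickening of $\bigcup_j (f^j)^{-1}(\partial C_{\alpha_j})$, to which the $\limsup$ form of Portmanteau gives the required upper bound. The paper's route instead works with an arbitrary Borel partition $\gamma$ of small diameter (no null-boundary requirement), inner-approximates the atoms of $\bigvee f^{-j}\gamma$ by compact sets, enlarges $\gamma$ to a second partition $\gamma'$ whose atoms contain these compacta in their interiors, and uses Urysohn bump functions to define a weak-$\ast$ neighborhood of $\mu$ controlling the $\mu_n$-masses of the $f_n$-pullback atoms. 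Your approach is shorter and relies on the standard continuity-set machinery; the paper's construction avoids choosing a special partition at the cost of a more elaborate approximation scheme. One small thing to state explicitly if you were to write this up: to control $\mu_n(A_n^\alpha\triangle A^\alpha)$ you need the thickened boundary taken \emph{closed}, since Portmanteau's $\limsup$ inequality applies to closed sets; your phrase "sandwiching between closed and open $\mu$-continuity sets" gestures at this but leaves it implicit. Also, you remark that $f_n^{-1}\to f^{-1}$ uniformly; this isn't actually needed, since $f_n^{-j}C_{\alpha_j}=\{x:f_n^j(x)\in C_{\alpha_j}\}$ only uses $f_n^j\to f^j$ in $C^0$.
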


The proof is similar to the proof of \cite[Theorem 8.2]{walters2000}.

\begin{proof} Fix $\varepsilon > 0$ and let $\delta>0$ be  the common scale of entropy expansiveness for each $f_n$ and $f$. Consider a partition $\gamma = \{C_1, \ldots, C_k\}$ of $M$ into Borel sets with $\operatorname{diam}(C_j) < \delta$. By \cite[Theorem 3.5]{Bow72}, we know that $h_\mu(f) = h_\mu(f, \gamma)$. Therefore, we can choose $N > 0$ large enough such that:
$$
\frac{1}{N} H_\mu\left(\bigvee_{j=0}^{N-1} f^{-j} \gamma\right) < h_\mu(f) + \frac{\varepsilon}{2}.
$$

For each $i_0, \ldots, i_{N-1}$, we select compact sets $K(i_0, \ldots, i_{N-1}) \subset \bigcap_{j=0}^{N-1} f^{-j} C_{i_j}$ such that:
\begin{equation}\label{compact-set-choice}
\mu\left(\bigcap_{j=0}^{N-1} f^{-j} C_{i_j} \setminus K(i_0, \ldots, i_{N-1})\right) < \varepsilon_1,
\end{equation}
where   $\varepsilon_1 > 0$ is a fixed positive number that will be chosen later.

For each $i \in \{1, \ldots, k\}$, define:
$$
L_i := \bigcup_{j=0}^{N-1} \{f^j K(i_0, \ldots, i_{N-1})|i_j=i\}.
$$
Note that  $L_i \subset C_i$ and the sets $L_1, \ldots, L_k$ are compact and pairwise disjoint, so we can define the partition $\gamma' = \{C_1', \ldots, C_k'\}$, with $\operatorname{diam}(C_i') < \delta$ and $L_i \subset \operatorname{int}(C_i')$. Thus, for all $i_0, \ldots, i_{N-1}$:
$$
K(i_0, \ldots, i_{N-1}) \subset \operatorname{int}\left(\bigcap_{j=0}^{N-1} f^{-j} C_{i_j}'\right).
$$

Since $f_n \to f$ in the $C^0$ topology, there exists $N_1 \in \mathbb{N}$ such that for all $n \geq N_1$:
$$
K(i_0, \ldots, i_{N-1}) \subset \operatorname{int}\left(\bigcap_{j=0}^{N-1} f_n^{-j} C_{i_j}'\right).
$$

By Urysohn's lemma, we construct continuous functions $\varphi_{i_0, \ldots, i_{N-1}}: M \to \mathbb{R}$, with $0 \leq \varphi_{i_0, \ldots, i_{N-1}} \leq 1$, such that $\varphi_{i_0, \ldots, i_{N-1}} = 1$ on $K(i_0, \ldots, i_{N-1})$ and $\varphi_{i_0, \ldots, i_{N-1}} = 0$ outside $\operatorname{int}\left(\bigcap_{j=0}^{N-1} f^{-j} C_{i_j}'\right)$. Here, if $A\subset M$,  $\operatorname{int}(A)$ means the interior of $A$.

Now, we define the weak* neighborhood of $\mu$:
$$
U(i_0, \ldots, i_{N-1}) = \left\{\nu \in \cM(M) : \left|\int \varphi_{i_0, \ldots, i_{N-1}} d\nu - \int \varphi_{i_0, \ldots, i_{N-1}} d\mu \right| < \varepsilon_1 \right\}.
$$

If $\nu \in U(i_0, \ldots, i_{N-1})$, then:
$$
\nu\left(\bigcap_{j=0}^{N-1} f_n^{-j} C_{i_j}'\right) >\mu(K(i_0, \ldots, i_{N-1})) - \varepsilon_1.
$$

So, if $\nu \in U(i_0, \ldots, i_{N-1}) $, then   \eqref{compact-set-choice} implies:
$$
\nu\left(\bigcap_{j=0}^{N-1} f_n^{-j} C_{i_j}'\right) - \mu\left(\bigcap_{j=0}^{N-1} f^{-j} C_{i_j}\right) < 2\varepsilon_1.
$$

Taking $U = \bigcap_{i_0, \ldots, i_{N-1}} U(i_0, \ldots, i_{N-1})$, if $\nu \in U$, as $\nu$ and $\mu$ are probability measures we have that 

$$
\left|\nu\left(\bigcap_{j=0}^{N-1} f_n^{-j} C_{i_j}'\right) - \mu\left(\bigcap_{j=0}^{N-1} f^{-j} C_{i_j}\right)\right| < 2\varepsilon_1,
$$

and using the continuity of $x \mapsto x \log x$, we can choose $\varepsilon_1$ such that for sufficiently large $n$:
$$
\frac{1}{N} H_{\mu_n}\left(\bigvee_{j=0}^{N-1} f_n^{-j} \gamma'\right) < \frac{1}{N} H_\mu\left(\bigvee_{j=0}^{N-1} f^{-j} \gamma\right) + \frac{\varepsilon}{2}.
$$

Thus, for large $n$:
$$
\begin{aligned}
h_{\mu_n}(f_n) & = h_{\mu_n}(f_n, \gamma') \quad \text{(by \cite[Theorem 3.5]{Bow72}, since $\operatorname{diam}(\gamma') < \delta$)} \\
& \leq \frac{1}{N} H_{\mu_n}\left(\bigvee_{j=0}^{N-1} f_n^{-j} \gamma'\right) \\
& < \frac{1}{N} H_\mu\left(\bigvee_{j=0}^{N-1} f^{-j} \gamma\right) + \frac{\varepsilon}{2} \\
& < h_\mu(f) + \varepsilon.
\end{aligned}
$$

Hence:
$$
\limsup_{n \to \infty} h_{\mu_n}(f_n) \leq h_\mu(f),
$$
as desired.
\end{proof}

\begin{theorem}\label{Upper semi continuity of metric entropy}
Let $f: M \to M$ be a diffeomorphism on a compact manifold $M$ admitting a dominated splitting $E^{ss} \oplus E_1 \oplus \cdots \oplus E_k \oplus E^{uu}$, where  each $E_i$ is one-dimensional, and let $f_n\subset \Diff^1(M)$, $n\in \mathbb{N}$, be a sequence converging to $f$ in the $C^1$ topology. Suppose $\mu_n \in \cM_{f_n}$ is a sequence of invariant measures converging to $\mu \in \cM_f$. Then, the following holds:
$$
\limsup_{n \to \infty} h_{\mu_n}(f_n) \leq h_\mu(f).
$$
\end{theorem}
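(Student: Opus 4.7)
The plan is to reduce to Proposition~\ref{upper-semicontinuous, entropy expansive} by establishing that $f$ and $f_n$, for all sufficiently large $n$, are entropy expansive at a common scale $\delta>0$. Once such a uniform scale is in hand, Proposition~\ref{upper-semicontinuous, entropy expansive} immediately yields $\limsup_{n\to\infty} h_{\mu_n}(f_n)\le h_\mu(f)$.

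The first step is to use $C^1$-persistence of the dominated splitting. Because $E^{ss}\oplus E_1\oplus\cdots\oplus E_k\oplus E^{uu}$ is a dominated splitting with each central summand of dimension one, there is a $C^1$-neighborhood $\mathcal{U}$ of $f$ such that every $g\in\mathcal{U}$ admits a dominated splitting of the same form with the same dimensions, continuously depending on $g$ in the $C^0$ topology, and with uniform domination constants $C,\lambda$ independent of $g\in\mathcal{U}$. In particular, by Remark~\ref{f is away from tangeccies}, every such $g$ is away from homoclinic tangencies.

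The heart of the argument is the following claim:

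\begin{quote}
\emph{Uniform $h$-expansivity claim.} There exist $\delta>0$ and a $C^1$-neighborhood $\mathcal{V}\subseteq\mathcal{U}$ of $f$ such that every $g\in\mathcal{V}$ is entropy expansive at scale $\delta$.
\end{quote}

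I would prove this by showing that, uniformly in $g\in\mathcal{V}$, the bi-infinite dynamical ball
\[
\Gamma_\varepsilon(g,x)=\{\,y\in M:\ d(g^n(y),g^n(x))<\varepsilon\ \text{for all}\ n\in\mathbb{Z}\,\}
\]
carries zero topological entropy once $\varepsilon\le\delta$. The standard strategy uses the dominated splitting twice: the uniform hyperbolicity of $E^{ss}$ and $E^{uu}$ forces $\Gamma_\varepsilon(g,x)$ into a local \emph{central plaque} tangent to $E_1\oplus\cdots\oplus E_k$ at $x$; then, iterating the domination between consecutive one-dimensional central subbundles, one pushes $\Gamma_\varepsilon(g,x)$ inside a curve tangent to a single $E_i(x)$, on which $g$ behaves like a homeomorphism of an interval. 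By Bowen's classical inequality, such a system has zero topological entropy, giving entropy expansivity at the chosen scale. Since the domination constants can be picked uniformly on $\mathcal{V}$, the scale $\delta$ can be chosen uniformly as well. Alternatively, the claim can be quoted directly from the Liao--Viana--Yang theorem, using that $f$, and hence every $g$ in a sufficiently small $C^1$-neighborhood, is $C^1$-far from homoclinic tangencies.

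With the claim in place, for $n$ large enough $f_n\in\mathcal{V}$ and $f\in\mathcal{V}$, so $f$ and $f_n$ are entropy expansive at the common scale $\delta$. Since $f_n\to f$ in $C^0$ and $\mu_n\to\mu$ weakly-$*$, Proposition~\ref{upper-semicontinuous, entropy expansive} applies and yields the desired upper semicontinuity. The main obstacle in this plan is the \emph{uniform} $h$-expansivity claim: showing not only that each $g$ in a neighborhood of $f$ is entropy expansive (which follows fairly directly from the one-dimensional dominated splitting), but that a single scale $\delta$ works simultaneously for $f$ and for all nearby $f_n$. This is precisely the role played by the uniformity of the domination constants in $\mathcal{V}$.
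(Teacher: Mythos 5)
Your proposal follows the paper's proof exactly: by Remark~\ref{f is away from tangeccies} the one-dimensional dominated splitting puts $f$ and every nearby diffeomorphism away from tangencies, so the Liao--Viana--Yang theorem gives a uniform scale of entropy expansivity, and Proposition~\ref{upper-semicontinuous, entropy expansive} then finishes. One small caveat on your hand-sketched alternative to quoting Liao--Viana--Yang: the bi-infinite dynamical ball $\Gamma_\varepsilon(g,x)$ lives in a $k$-dimensional center plaque and the domination between consecutive one-dimensional central bundles does not actually let you collapse it to a curve tangent to a single $E_i$, so that sub-argument has a gap, but since you explicitly offer the Liao--Viana--Yang citation as the alternative route, the overall proof goes through and matches the paper's.
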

\begin{proof}
    By Remark \ref{f is away from tangeccies} and \cite[Theorem 3.1]{LVY13}  there exists $\delta>0$ such that for $n$ big enough each $f_n$ is entropy expansive at scale $\delta$. So, by Proposition \ref{upper-semicontinuous, entropy expansive},    
 $$
\limsup_{n \to \infty} h_{\mu_n}(f_n) \leq h_\mu(f),
$$
as we wanted to prove.
\end{proof}

\begin{proposition}\label{countinuity of entropy}
    Let $f: M \to M$ be a diffeomorphism on a compact manifold $M$ admitting a dominated splitting $E^{ss} \oplus E_1 \oplus \cdots \oplus E_k \oplus E^{uu}$, where each $E_i$ is one-dimensional. If $f$ possesses a hyperbolic measure of maximal entropy  then the topological entropy is continuous at $f$. 
\end{proposition}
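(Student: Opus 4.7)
The plan is to establish upper and lower semicontinuity of $h_{\mathrm{top}}$ at $f$ by two separate arguments. For upper semicontinuity, I will take a sequence $g_n \to f$ in $C^1$ and choose MMEs $\nu_n \in \mathcal{M}_{g_n}$ for each $g_n$. These exist because the dominated splitting into one-dimensional central bundles forces $f$ to lie outside the $C^1$-closure of diffeomorphisms with homoclinic tangencies (Remark \ref{f is away from tangeccies}); by \cite[Theorem 3.1]{LVY13} this yields a $C^1$ neighborhood $\mathcal{U}_0$ of $f$ and a uniform scale $\delta > 0$ such that every $g \in \mathcal{U}_0$ is entropy expansive at scale $\delta$, and hence by Bowen's theorem admits an MME. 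Passing to a weak-$\ast$ limit $\nu_n \to \nu \in \mathcal{M}_f$ and applying Theorem \ref{Upper semi continuity of metric entropy} gives
$$\limsup_{n \to \infty} h_{\mathrm{top}}(g_n) \;=\; \limsup_{n \to \infty} h_{\nu_n}(g_n) \;\leq\; h_\nu(f) \;\leq\; h_{\mathrm{top}}(f),$$
which is the upper semicontinuity at $f$. Note that this half of the argument does \emph{not} use the hyperbolicity of the MME of $f$; the hyperbolicity hypothesis will only be needed for the lower bound.

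For lower semicontinuity, my intention is to combine Katok's horseshoe theorem with the classical $C^1$-structural stability of basic hyperbolic sets. Let $\mu$ be the hyperbolic MME provided by hypothesis; the case $h_\mu(f)=0$ gives lower semicontinuity trivially, so assume $h_\mu(f)>0$. Since speaking of hyperbolic measures via Pesin theory implicitly requires $C^{1+}$ regularity, Katok's theorem applies and produces, for every $\varepsilon>0$, a basic hyperbolic set $\Lambda \subset M$ with $h_{\mathrm{top}}(f|_\Lambda) > h_\mu(f) - \varepsilon = h_{\mathrm{top}}(f) - \varepsilon$. By the $C^1$-robustness of basic hyperbolic sets, there exists a $C^1$ neighborhood $\mathcal{U}_\varepsilon$ of $f$ and a continuous family of hyperbolic continuations $\Lambda_g$, for $g \in \mathcal{U}_\varepsilon$, such that $g|_{\Lambda_g}$ is topologically conjugate to $f|_\Lambda$. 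Consequently, for every $g \in \mathcal{U}_\varepsilon$,
$$h_{\mathrm{top}}(g) \;\geq\; h_{\mathrm{top}}(g|_{\Lambda_g}) \;=\; h_{\mathrm{top}}(f|_\Lambda) \;>\; h_{\mathrm{top}}(f) - \varepsilon.$$
Letting $\varepsilon \to 0$ gives $\liminf_{g \to f} h_{\mathrm{top}}(g) \geq h_{\mathrm{top}}(f)$, completing the proof of continuity.

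No real obstacle is expected: the argument is a bookkeeping combination of Theorem \ref{Upper semi continuity of metric entropy} with Katok's approximation of hyperbolic measures by horseshoes. The only items to check carefully are the hypotheses needed to invoke each ingredient, namely that uniform entropy expansivity in a $C^1$ neighborhood guarantees the existence of MMEs for nearby diffeomorphisms (automatic here from \cite{LVY13} and the one-dimensional dominated structure) and that Katok's horseshoes admit $C^1$-hyperbolic continuations (standard structural stability).
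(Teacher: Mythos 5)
Your proposal is correct and follows essentially the same route as the paper: upper semicontinuity via Theorem \ref{Upper semi continuity of metric entropy} applied to a convergent sequence of MMEs (whose existence is ensured by the uniform entropy expansivity from \cite{LVY13}), and lower semicontinuity via Katok's horseshoes together with $C^1$-robustness of hyperbolic basic sets. The paper's proof is a two-line statement of precisely these two ingredients; your version simply supplies the standard bookkeeping.
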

\begin{proof}
    Upper-semi-continuity follows from Theorem \ref{Upper semi continuity of metric entropy} while lower-semicontinuity follows from Katok's horseshoes  in \cite{K80, KH95} (see also \cite[Theorem 1]{Gel16}). 
\end{proof}

\section{Finite many measure of maximal entropy}\label{ProofTeoA}

\begin{lemma}\label{Uniform-size-implies-homoclinically -related}
    Let $f: M \to M$ be a $C^{1+}$ diffeomorphism with a dominated splitting $E^{cs} \oplus E^{cu}$, $\ell \in \mathbb{Z}^+$, and $\{\mu_n\}_{n \in \mathbb{N}}$ a sequence of hyperbolic measures of $f$ such that 
    $$\mu_n(B_\ell(f)) > 0 \quad \text{for all } n \in \mathbb{N}.$$ 
    Then, at least two of these measures are homoclinically related.
\end{lemma}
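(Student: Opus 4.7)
The strategy is a pigeonhole argument based on the uniformity in Theorem~\ref{Invariant-manifolds}. For the fixed $\ell$, let $\delta=\delta(\ell)>0$ and $C=C(\ell), \chi=\chi(\ell), \varepsilon=\varepsilon(\ell)$ be the constants provided by that theorem: any two points $x,y \in B_\ell(f)$ with $d(x,y)<\delta$ admit transversely intersecting uniform Pesin disks $W^s_{C,\chi,\varepsilon}(f,x)$ and $W^u_{C,\chi,\varepsilon}(f,y)$, and symmetrically for $W^u_{C,\chi,\varepsilon}(f,x)$ and $W^s_{C,\chi,\varepsilon}(f,y)$. Since $M$ is compact, I would cover it by finitely many balls $B(q_1,\delta/2),\dots,B(q_N,\delta/2)$ and set $A_i := B(q_i,\delta/2)\cap B_\ell(f)$. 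Because $\mu_n(B_\ell(f))>0$ for every $n$, each $\mu_n$ assigns positive mass to some $A_{i(n)}$. Since the family $\{\mu_n\}$ contains infinitely many distinct measures, the pigeonhole principle produces an index $i_0$ and two indices $n_1\neq n_2$ with $\mu_{n_1}\neq \mu_{n_2}$ and $\mu_{n_k}(A_{i_0})>0$ for $k=1,2$.

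To conclude $\mu_{n_1}\sim\mu_{n_2}$, set $\Lambda_k := A_{i_0}\cap \cR_k$, where $\cR_k$ denotes the set of Lyapunov-regular points of $\mu_{n_k}$ at which the measure-theoretic stable and unstable Pesin manifolds exist and contain the uniform disks provided by Theorem~\ref{Invariant-manifolds}; by the Oseledets theorem together with Pesin theory this set is of full $\mu_{n_k}$-measure, so $\mu_{n_k}(\Lambda_k)>0$. For any pair $(x,y)\in\Lambda_1\times\Lambda_2$, both points lie in $B(q_{i_0},\delta/2)\cap B_\ell(f)$, hence $d(x,y)<\delta$, and Theorem~\ref{Invariant-manifolds} gives that $W^u_{C,\chi,\varepsilon}(f,x)$ meets $W^s_{C,\chi,\varepsilon}(f,y)$ transversely. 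Since these disks lie inside the full Pesin manifolds $W^u(x)$ and $W^s(y)$, respectively, we obtain $W^u(x)\pitchfork W^s(y)\neq\varnothing$, i.e.\ $\mu_{n_1}\preceq \mu_{n_2}$. Interchanging the roles of $x$ and $y$ and using the dual transversality statement in Theorem~\ref{Invariant-manifolds} yields $\mu_{n_2}\preceq\mu_{n_1}$, hence $\mu_{n_1}\sim\mu_{n_2}$.

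The only delicate point, and what I expect to be the main obstacle in writing this carefully, is verifying that the uniformly-sized disks of radius $r(\ell)$ furnished by Theorem~\ref{Invariant-manifolds} genuinely lie inside the measure-theoretic Pesin manifolds of the hyperbolic measures $\mu_{n_k}$. This is however built into the Pesin-block construction: points of $B_\ell(f)$ experience uniform exponential contraction (respectively expansion) of $E^{cs}$ (respectively $E^{cu}$) along the $f^\ell$-orbit, so the dynamically defined sets $W^s_{C,\chi,\varepsilon}(f,x)$ and $W^u_{C,\chi,\varepsilon}(f,x)$ are smooth graphs over $E^{cs}(x)$ and $E^{cu}(x)$, and at any $\mu_{n_k}$-Lyapunov-regular point these graphs are contained in the full Pesin stable and unstable manifolds. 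Therefore the transverse intersection of the uniform disks witnesses the transverse intersection of the full Pesin manifolds that enter the definition of the homoclinic relation $\sim$.
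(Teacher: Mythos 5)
Your proof is correct and takes essentially the same route as the paper's: both localize each $\mu_n$ to a small-diameter subset of $B_\ell(f)$, use compactness of $M$ to find two of these localizations within distance $\delta$ of each other, and then invoke Theorem~\ref{Invariant-manifolds} to get transverse intersections of the uniform Pesin disks. The only cosmetic difference is that you use a finite $\delta/2$-cover plus pigeonhole where the paper extracts a convergent subsequence of the centers $x_n$; these are interchangeable applications of compactness. Your extra care about restricting to Lyapunov-regular points so that the uniform disks sit inside the full Pesin manifolds $W^{s/u}(x)$ is a sound point (and follows immediately from the defining exponential estimate of $W^{s}_{C,\chi,\varepsilon}$), though the paper leaves it implicit.
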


\begin{proof}
Let $\{\mu_n\}$ be a sequence of hyperbolic measures as in the above lemma and $\delta,r>0$ as in Theorem \ref{Invariant-manifolds}. For each $n \in \mathbb{N}$, since $\mu_n(B_{\ell}(f)) > 0$, we can choose $x_n \in B_{\ell}(f)$ such that 
\begin{equation}
\mu_n\left(B_{\delta/3}(x_n) \cap B_{\ell}(f)\right) > 0.
\end{equation}

Since $M$ is compact, there is a convergent subsequence $\{x_{n_k}\}$ such that $x_{n_k} \to x$ as $k \to \infty$. Consequently, there exists $N' \in \mathbb{N}$ such that for all $k_1, k_2 > N'$, we have $d(x_{k_1}, x_{k_2}) < \delta/3$. This implies that for any $z_1 \in B_{\delta/3}(x_{k_1}) \cap B_{\ell}(f)$ and $z_2 \in B_{\delta/3}(x_{k_2}) \cap B_{\ell}(f)$, the distance $d(z_1, z_2) < \delta$. Thus, Theorem~\ref{Invariant-manifolds} ensures that 
\begin{equation}
    W^{s/u}_r(z_1) \pitchfork W^{u/s}_r(z_2) \neq \emptyset,
\end{equation}
for all $z_1 \in B_{\delta/3}(x_{k_1}) \cap B_{\ell}(f)$ and $z_2 \in B_{\delta/3}(x_{k_2}) \cap B_{\ell}(f)$.

Therefore, we conclude that $\mu_{k_1} \sim \mu_{k_2}$ for all $k_1, k_2 > N'$, proving that at least two of these measures are homoclinically related.
\end{proof}

As a consequence, we get the following:

\begin{theorem}\label{MainTheoA}
Let $f: M \to M$ be a diffeomorphism on a compact manifold $M$ admitting a dominated splitting $E^{cs} \oplus E^{cu}$. Suppose there exist  a constant   $\alpha > 0$ such that for every $\mu \in \mathcal{M}^e_f$ with $h_\mu(f)=h(f) $ the following conditions hold:
\begin{enumerate}
    \item $\mu$ is $\alpha$--hyperbolic, and
    \item $\operatorname{index}(\mu) = \dim(E^{cs})$.
\end{enumerate}
Then $f$ admits only a finite number of ergodic MMEs.
\end{theorem}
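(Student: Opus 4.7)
The plan is to proceed by contradiction using the Buzzi--Crovisier--Sarig criterion from Proposition \ref{BCS-critirium}. Assume that $f$ admits an infinite sequence $\{\mu_n\}_{n\in \mathbb{N}}$ of pairwise distinct ergodic MMEs. By weak-$*$ compactness of $\cM_f$, passing to a subsequence we have $\mu_n\to \mu$ for some $\mu\in \cM_f$. The goal is to find one integer $\ell$ and infinitely many indices $n$ with $\mu_n(B_\ell(f))>0$. Once such a common Pesin block is produced, Lemma \ref{Uniform-size-implies-homoclinically -related} furnishes distinct $n_1\neq n_2$ with $\mu_{n_1}\sim \mu_{n_2}$; Proposition \ref{HoclinicClassRelation} then yields a hyperbolic periodic orbit $\cO$ with $\cO\sim \mu_{n_i}$ for $i=1,2$; and Proposition \ref{BCS-critirium} contradicts the coexistence of two distinct hyperbolic MMEs in the homoclinic class of $\cO$.

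For the uniform Pesin block the approach is to apply Lemma \ref{Uniform Pesin Blocks} (and its symmetric version for $f^{-1}$ and $E^{cu}$) to the constant sequence $f_n\equiv f$ with the ergodic MMEs $\mu_n$. The $\alpha$-hyperbolicity together with $\operatorname{index}(\mu_n)=\dim E^{cs}$ forces $\mu_n(\Lambda^{cs}_\alpha(f))=\mu_n(\Lambda^{cu}_\alpha(f))=1$ for every $n$, which is precisely what Lemma \ref{Uniform Pesin Blocks} requires of the sequence. The only additional input needed is the corresponding statement for the weak-$*$ limit, namely $\mu(\Lambda^{cs}_\alpha(f))=\mu(\Lambda^{cu}_\alpha(f))=1$; granted this, the lemma produces $\ell$ and $N$ such that $\mu_n(B_\ell(f))=\mu_n(B^s_\ell(f)\cap B^u_\ell(f))>1/2$ for every $n\geq N$, which is more than enough to invoke Lemma \ref{Uniform-size-implies-homoclinically -related}.

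The main obstacle is therefore to establish that each ergodic component of $\mu$ is $\alpha$-hyperbolic with index $\dim E^{cs}$, from which $\mu(\Lambda^{cs}_\alpha(f)\cap \Lambda^{cu}_\alpha(f))=1$ follows. This would be immediate if $\mu$ itself were an MME, because then its ergodic decomposition would consist of ergodic MMEs and the theorem's hypothesis would apply componentwise. In other words, the missing ingredient is upper semi-continuity of the metric entropy at the accumulation point $\mu$, yielding $h_\mu(f)\geq \limsup_n h_{\mu_n}(f)=h(f)$. In the settings where Theorem \ref{MainTheoA} will be applied later in the paper this upper semi-continuity is ensured by the finer one-dimensional dominated splitting $E^{ss}\oplus E_1\oplus \cdots\oplus E_k\oplus E^{uu}$ via Theorem \ref{Upper semi continuity of metric entropy}, so the argument above closes without additional assumptions.
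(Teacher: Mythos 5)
Your overall strategy matches the paper's: extract a convergent subsequence $\mu_n\to\mu$, show $\mu$ is a (possibly non-ergodic) MME, use the hypothesis on ergodic MMEs and the ergodic decomposition to get $\mu(\Lambda_\alpha(f))=1$, then produce a common Pesin block $B_\ell(f)$ of uniform positive $\mu_n$-measure via Lemma \ref{Uniform Pesin Blocks}, and conclude with Lemma \ref{Uniform-size-implies-homoclinically -related} and Proposition \ref{BCS-critirium}.

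The gap is exactly the step you single out, and you have not actually closed it. Theorem \ref{MainTheoA} is stated for a diffeomorphism admitting only a two-bundle dominated splitting $E^{cs}\oplus E^{cu}$, with no assumption that the center bundles are one-dimensional. Theorem \ref{Upper semi continuity of metric entropy}, which you invoke for upper semi-continuity, requires the finer splitting $E^{ss}\oplus E_1\oplus\cdots\oplus E_k\oplus E^{uu}$ with each $E_i$ one-dimensional, because its proof runs through Remark \ref{f is away from tangeccies} (away-from-tangencies) and the Liao--Viana--Yang entropy-expansiveness result. That extra structure is not part of the hypotheses of Theorem \ref{MainTheoA}, so your deduction that $h_\mu(f)=h(f)$ does not go through under the theorem's stated assumptions; deferring the issue to ``the settings where the theorem will be applied later'' proves a weaker statement than the one asserted. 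The paper resolves precisely this point differently: it cites \cite[Theorem 1.2]{LSW15} to obtain that the weak-$*$ limit $\mu$ of MMEs is again an MME directly for diffeomorphisms with a dominated splitting $E^{cs}\oplus E^{cu}$, without assuming one-dimensional center bundles. You should replace the appeal to Theorem \ref{Upper semi continuity of metric entropy} by a reference to that result (or give an independent argument, e.g.\ via a Pliss-type argument combined with the $\alpha$-hyperbolicity of the $\mu_n$, that gives the needed upper semi-continuity along this particular sequence). The remainder of your argument, including the use of Proposition \ref{HoclinicClassRelation} to pass from homoclinically related measures to a common periodic orbit before invoking Proposition \ref{BCS-critirium}, is a correct (if slightly more explicit) version of what the paper does.
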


\begin{proof}


The proof goes by contradiction. Assume that there exist infinitely many MMEs. So, we can take a sequence $\mu_n$ of different MMEs for $f$ which converges to $\mu \in \cM_f$. By hypotheses, each $\mu_n$ is $\alpha$--hyperbolic with $\operatorname{index}(\mu_n)=\dim{E^{cs}}$.

By  \cite[Theorem 1.2]{LSW15}, $\mu$ is a measure of maximal entropy, not necessarily ergodic. 
 By the Ergodic Decomposition Theorem, this implies that
\begin{equation}
    \mu(\Lambda^{cs}_{\alpha}(f, E^{cs}, E^{cu}))=1.
\end{equation}
Here $  \mu(\Lambda^{cs}_{\alpha}(f, E^{cs}, E^{cu}))$ is as in \eqref {Hyperolic-points}.

Fix $0<\varepsilon< 1/2$. By Lemma \ref{Uniform Pesin Blocks}, there exist integers $\ell'_1$ and $N_1$ such that for any $n \geq N_1$ and $\ell \geq \ell'_1$, both $\mu_n(B^s_{\ell}(f))$ and $\mu(B^s_{\ell}(f))$ are greater than $1-\varepsilon$. Similarly, by symmetry, there exist $\ell'_2$ and $N_2$ such that if $n \geq N_2$ and $\ell \geq \ell'_2$, then both $\mu_n(B^u_{\ell}(f))$ and $\mu(B^u_{\ell}(f))$ are greater than $1-\varepsilon$. Thus, for $\ell \geq \ell' := \max\{\ell'_1, \ell'_2\}$ and $n \geq \max\{N_1, N_2\}$, we have that both $\mu_n(B_{\ell}(f))$ and $\mu(B_{\ell}(f))$ are greater than $1-2\varepsilon$. 

Therefore, by Lemma \ref{Uniform-size-implies-homoclinically -related} there exist $k_1,k_1\in \bN$ such that $\mu_{k_1} \sim \mu_{k_2}$, which is a contradiction to Proposition \ref{BCS-critirium}. This concludes the proof.
\end{proof}

Let $f: M \to M$ be as described in Theorem \ref{theo-entropy-condition-dimension-2}. We denote the Lyapunov exponents in the directions $E_1$ and $E_2$ with respect to an ergodic measure $\mu$ of $f$ by $\lambda_1(f,\mu)$ and $\lambda_2(f,\mu)$, respectively.

\begin{lemma}\label{h(f)>h^s(f),h^u(f) implies far away from zero}
    Let $f: M \to M$ be as in Theorem \ref{theo-entropy-condition-dimension-2}. If $\mu \in \mathcal{M}^e_f$ satisfies $h_{\mu}(f) > \max\{h^s(f), h^u(f)\}$, then:
    $$
    \lambda_1(f, \mu) < h^s(f) - h_\mu(f) < 0 \quad \text{and} \quad \lambda_2(f, \mu) > h_\mu(f) - h^u(f) > 0.
    $$ 
\end{lemma}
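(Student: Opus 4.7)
The plan is to combine a Ruelle--Margulis-type inequality, which bounds the entropy gap between $h_\mu(f)$ and the partial unstable entropy $h^u_\mu(f)$ by the sum of the positive center Lyapunov exponents, with the strict domination $\lambda_1(\mu) < \lambda_2(\mu)$ and the hypothesis $h_\mu(f) > \max\{h^s(f), h^u(f)\}$, to first pin down the signs of $\lambda_1(\mu), \lambda_2(\mu)$ and then extract the quantitative bounds.

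First, I would invoke (as a consequence of the variational principle for unstable entropy together with a Ruelle-type estimate on the quotient dynamics transverse to $\mathcal{F}^{uu}$, as developed in \cite{HHW17, Yan21}) the inequality
\begin{equation*}
h_\mu(f) \leq h^u_\mu(f) + \lambda_1^+(\mu) + \lambda_2^+(\mu),
\end{equation*}
where $\lambda^+ := \max\{\lambda, 0\}$. Applied to $f^{-1}$, one obtains the dual
\begin{equation*}
h_\mu(f) \leq h^s_\mu(f) + \lambda_1^-(\mu) + \lambda_2^-(\mu),
\end{equation*}
with $\lambda^- := \max\{-\lambda, 0\}$, and the respective variational principles give $h^u_\mu(f) \leq h^u(f)$ and $h^s_\mu(f) \leq h^s(f)$.

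A short case analysis then forces the signs. If $\lambda_2(\mu) \leq 0$, then by domination $\lambda_1(\mu) < \lambda_2(\mu) \leq 0$, so both $\lambda_i^+(\mu) = 0$ and the first inequality reduces to $h_\mu(f) \leq h^u(f)$, contradicting the hypothesis; hence $\lambda_2(\mu) > 0$. Dually, if $\lambda_1(\mu) \geq 0$, then $\lambda_2(\mu) > \lambda_1(\mu) \geq 0$ and both $\lambda_i^-(\mu) = 0$, so the dual inequality yields $h_\mu(f) \leq h^s(f)$, again impossible. Therefore $\lambda_1(\mu) < 0 < \lambda_2(\mu)$. With these signs, the two inequalities collapse to $h_\mu(f) \leq h^u(f) + \lambda_2(\mu)$ and $h_\mu(f) \leq h^s(f) + |\lambda_1(\mu)|$, which rearrange to $\lambda_2(\mu) \geq h_\mu(f) - h^u(f) > 0$ and $\lambda_1(\mu) \leq h^s(f) - h_\mu(f) < 0$.

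The main subtle point is upgrading the last two estimates to strict inequalities, as the lemma asserts. I expect this to follow either from strictness of the partial variational principle ($h^u_\mu(f) < h^u(f)$, since $\mu$ being an MME whose total entropy already exceeds $h^u(f)$ should prevent it from simultaneously maximizing the unstable partial entropy, and symmetrically on the stable side), or from a sharper Ruelle-type bound for the quotient dynamics transverse to $\mathcal{F}^{uu}$ (resp.\ $\mathcal{F}^{ss}$) in the one-dimensional center setting. Beyond this subtlety, the argument is a clean sign-pinning followed by rearrangement.
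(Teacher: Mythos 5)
Your proposal takes essentially the same route as the paper. The inequality you cite,
$h_\mu(f) \leq h^u_\mu(f) + \lambda_1^+(\mu) + \lambda_2^+(\mu)$,
is exactly the Ledrappier--Young/Ruelle-type bound that the paper invokes (it cites \cite{LY85a,LY85b} and \cite{Bro22} rather than \cite{HHW17,Yan21}, but it is the same estimate), and the paper's argument is precisely your sign-pinning followed by rearrangement, stated a bit more tersely.

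Your flag about strictness is a genuine observation, and you should note that the paper's own proof has the same feature: the Ledrappier--Young bound and the variational inequality $h^u_\mu(f)\leq h^u(f)$ only give $\lambda_2(f,\mu)\geq h_\mu(f)-h^u(f)$ and $\lambda_1(f,\mu)\leq h^s(f)-h_\mu(f)$, not the strict inequalities in the lemma statement. Neither of your proposed repairs (strictness of the partial variational principle, or a sharper transverse Ruelle bound) is attempted in the paper. However, the overclaim is harmless: in the only place the lemma is used (Lemma~\ref{Uniformly far away from zero}), the hypothesis already supplies a strict gap $h(g)-\max\{h^s(g),h^u(g)\}>\alpha$, so the non-strict conclusion $|\lambda_i(g,\mu)|\geq h(g)-\max\{h^s(g),h^u(g)\}>\alpha$ gives the needed strict bound anyway. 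So your proof is correct up to the same harmless imprecision present in the paper, and you were right not to spend effort forcing strictness.
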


\begin{proof}
    For $f \in \Diff^{1+}(M)$ and $\mu$ ergodic, by the Ledrappier–Young formula \cite{LY85a, LY85b} and \cite{Bro22}, we have:
    \begin{equation}\label{L-Yformula}
        h_\mu(f) \leq h_\mu^u(f) + \sum_{\lambda_c(f, \mu) > 0} \lambda_c(f, \mu).
    \end{equation}
    
    Therefore, if $\mu \in \mathcal{M}^e_f$ satisfies $h_\mu(f) > h^u(f)$, it follows that $\lambda_2(f, \mu) > 0$. Similarly, if $h_\mu(f) > h^s(f)$, then $\lambda_1(f, \mu) < 0$. 
    This implies that 
    $$
     h_\mu(f) \leq h_\mu^u(f) + \lambda_2(f, \mu) \quad \mbox{and}\quad 
     h_\mu(f) \geq h_\mu^s(f) + \lambda_1(f, \mu),
     $$
 finishing the proof.
\end{proof}
\begin{lemma}\label{Uniformly far away from zero}
    Let $f: M \to M$ be a diffeomorphism as in Theorem \ref{theo-entropy-condition-dimension-2}. Then, there exist constants $\alpha > 0$ and a $C^1$ neighborhood $\mathcal{U}$ of $f$ such that if $g \in \Diff^{1+}(M) \cap \mathcal{U}$ and $\mu \in \mathcal{M}^e_g$ is a measure of maximal entropy, then $\mu$ is an $\alpha$--hyperbolic measure.
\end{lemma}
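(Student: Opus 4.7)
The plan is a contradiction argument using weak-$*$ compactness, continuity of the topological entropy at $f$, and the persistence of the dominated splitting. Suppose no such $\alpha$ and $\mathcal{U}$ exist. Then there is a sequence $g_n \to f$ in $C^1$ with $g_n \in \Diff^{1+}(M)$ and an MME $\mu_n$ of $g_n$ (existing because the one-dimensional dominated splitting persists and $g_n$ is entropy-expansive, making $h_\bullet(g_n)$ upper semicontinuous on $\mathcal{M}_{g_n}$ by the argument preceding Theorem \ref{Upper semi continuity of metric entropy}) such that $\min\{|\lambda_1(g_n,\mu_n)|,|\lambda_2(g_n,\mu_n)|\} \to 0$. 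Extract a subsequence with $\mu_n \to \mu \in \mathcal{M}_f$ weak-$*$.

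Next, I would identify $\mu$ as an MME of $f$. Since every MME of $f$ has entropy $>\max\{h^s(f),h^u(f)\}$, Lemma \ref{h(f)>h^s(f),h^u(f) implies far away from zero} gives a hyperbolic MME of $f$, so Proposition \ref{countinuity of entropy} yields $h(g_n)\to h(f)$. Combined with Theorem \ref{Upper semi continuity of metric entropy}, this gives $h(f)=\lim h_{\mu_n}(g_n)\le h_\mu(f)\le h(f)$, so $\mu$ is an MME of $f$. Decompose $\mu=\int\mu_y\, d\mu(y)$ ergodically; by affinity of entropy, $\mu_y$ is an ergodic MME for $\mu$-a.e.\ $y$, so by Lemma \ref{h(f)>h^s(f),h^u(f) implies far away from zero},
\[
\lambda_1(f,\mu_y)\le h^s(f)-h(f)\le -\alpha_0,\qquad \lambda_2(f,\mu_y)\ge h(f)-h^u(f)\ge \alpha_0,
\]
where $\alpha_0:=\min\{h(f)-h^s(f),\,h(f)-h^u(f)\}>0$.

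Finally, I pass the exponent bounds from $f$ to $g_n$. By persistence of dominated splittings under $C^1$-perturbations, for large $n$ there is a splitting $E^{ss}_{g_n}\oplus E_1^{g_n}\oplus E_2^{g_n}\oplus E^{uu}_{g_n}$ with each $E_i^{g_n}$ one-dimensional and converging to $E_i$ in the $C^0$ topology; hence $\log\|Dg_n|_{E_i^{g_n}(\cdot)}\|$ converges uniformly to $\log\|Df|_{E_i(\cdot)}\|$. Since $E_i^{g_n}$ is one-dimensional and $\mu_n$ is ergodic, Birkhoff gives $\lambda_i(g_n,\mu_n)=\int\log\|Dg_n|_{E_i^{g_n}}\|\,d\mu_n$, so weak-$*$ convergence yields
\[
\lambda_1(g_n,\mu_n)\longrightarrow\int\lambda_1(f,\mu_y)\,d\mu(y)\le-\alpha_0,\qquad \lambda_2(g_n,\mu_n)\longrightarrow\int\lambda_2(f,\mu_y)\,d\mu(y)\ge\alpha_0.
\]
This contradicts $\min\{|\lambda_1(g_n,\mu_n)|,|\lambda_2(g_n,\mu_n)|\}\to0$. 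Combining these center bounds with the uniform contraction/expansion in $E^{ss}_g$ and $E^{uu}_g$, whose rates depend only on the partially hyperbolic constants on a small $C^1$-neighborhood, produces a single $\alpha>0$ and a neighborhood $\mathcal{U}$ for which every MME of every $g\in\Diff^{1+}(M)\cap\mathcal{U}$ is $\alpha$-hyperbolic.

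The main obstacle is the identification of the weak-$*$ limit $\mu$ as an MME of $f$: this step fuses continuity of $h(\cdot)$ at $f$ (which itself requires that $f$ have a hyperbolic MME, provided by Lemma \ref{h(f)>h^s(f),h^u(f) implies far away from zero}) with the upper semicontinuity of the metric entropy from Theorem \ref{Upper semi continuity of metric entropy}, and then feeds an ergodic-decomposition argument to transfer the exponent bounds back to the $g_n$. Once this identification is in place, the persistence of the dominated splitting, the $C^0$ convergence of the invariant one-dimensional bundles, and the weak-$*$ continuity of the integrated log-norms finish the proof by routine computation.
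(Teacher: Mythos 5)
Your proof is correct, but it takes a genuinely different route from the paper's. The paper's own argument is much shorter and works at the level of topological quantities: it combines the continuity of $h(\cdot)$ at $f$ (Proposition~\ref{countinuity of entropy}) with the upper semicontinuity of $h^u$ and $h^s$ to conclude that $h(g)-\max\{h^u(g),h^s(g)\}>\alpha$ holds for every $g$ in a $C^1$-neighborhood of $f$, and then applies Lemma~\ref{h(f)>h^s(f),h^u(f) implies far away from zero} directly to $g$; that lemma immediately gives $\lambda_1(g,\mu)<-\alpha<0<\alpha<\lambda_2(g,\mu)$ for every MME $\mu$ of $g$. You instead apply that lemma only to $f$, run a weak-$*$ compactness/contradiction argument, identify the limit $\mu$ as an MME of $f$ via Proposition~\ref{countinuity of entropy} plus Theorem~\ref{Upper semi continuity of metric entropy}, decompose it ergodically, and then transport the exponent bounds back to $g_n$ by exploiting the $C^0$-persistence of the one-dimensional dominated bundles and the identity $\lambda_i(g_n,\mu_n)=\int\log\|Dg_n|_{E_i^{g_n}}\|\,d\mu_n$. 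Your route avoids invoking upper semicontinuity of the topological $u$- and $s$-entropies, replacing it with robustness of the splitting and weak-$*$ continuity of integrated log-norms, at the cost of a longer limiting argument; the paper's route is more economical given that upper semicontinuity of $h^u,h^s$ is available in the literature and is already being used elsewhere in the paper. Both approaches rely essentially on the same two inputs, namely Lemma~\ref{h(f)>h^s(f),h^u(f) implies far away from zero} and the continuity of topological entropy at $f$.
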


\begin{proof}
    By Lemma \ref{h(f)>h^s(f),h^u(f) implies far away from zero}, every measure of maximal entropy for $f$ is hyperbolic. Moreover, by Proposition \ref{countinuity of entropy} and the upper semicontinuity of $u,s$-entropy, there exist a $C^1$ neighborhood $\mathcal{U}$ of $f$ and a constant $\alpha > 0$ such that for every $g \in \mathcal{U} \cap \Diff^{1+}(M)$, we have:
$$ h (g) - \max \{h^s (g), h^u (g)\} > \alpha.$$

    Therefore, applying Lemma \ref{h(f)>h^s(f),h^u(f) implies far away from zero}, it follows that for every ergodic measure of maximal entropy $\mu$ of $g \in \mathcal{U} \cap \Diff^{1+}(M)$, we have $|\lambda_i(f, \mu)| > \alpha$ for $i=1,2$. This concludes the proof.
\end{proof}

   \begin{lemma}\label{comparation metric entropy}
Consider $f_n:M\to M$ a sequence of $C^{1+}$ diffeomorphisms converging to $f$.  If $\mu_n \in \cM_{f_n}^e$ is an ergodic measure of maximal entropy for $f_n$  converging to $\mu\in \cM_f$ then $h(f)=h_\mu(f)$.
    \end{lemma}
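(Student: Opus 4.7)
The plan is to sandwich $h_\mu(f)$ between two copies of $h(f)$. The upper bound $h_\mu(f)\le h(f)$ is immediate from the variational principle, so the real content is the reverse inequality.

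First, since each $\mu_n$ is an MME of $f_n$, the identity $h_{\mu_n}(f_n)=h(f_n)$ holds. Applying Theorem~\ref{Upper semi continuity of metric entropy} to the convergent pair $(f_n,\mu_n)\to(f,\mu)$ in the $C^1$ topology yields
$$\limsup_{n\to\infty} h(f_n)\;=\;\limsup_{n\to\infty} h_{\mu_n}(f_n)\;\le\; h_\mu(f).$$
Thus it remains to establish the matching lower bound $\liminf_{n\to\infty} h(f_n)\ge h(f)$, i.e.\ lower semi-continuity of topological entropy at $f$.

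This is where Proposition~\ref{countinuity of entropy} enters. In the ambient setting of the paper (the dominated splitting of Theorem~\ref{theo-entropy-condition-dimension-2} together with $h(f)>\max\{h^u(f),h^s(f)\}$), the diffeomorphism $f$ itself admits a hyperbolic MME: existence of some MME is guaranteed by entropy expansiveness (via Remark~\ref{f is away from tangeccies} and the results of \cite{LVY13}), and hyperbolicity of any such MME follows from Lemma~\ref{h(f)>h^s(f),h^u(f) implies far away from zero} (equivalently, from Lemma~\ref{Uniformly far away from zero} applied at $g=f$). Proposition~\ref{countinuity of entropy} therefore guarantees that topological entropy is continuous at $f$ in the $C^1$ topology, so $\lim_n h(f_n)=h(f)$. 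Chaining the inequalities gives
$$h(f)\;=\;\lim_n h(f_n)\;=\;\lim_n h_{\mu_n}(f_n)\;\le\; h_\mu(f)\;\le\; h(f),$$
forcing $h_\mu(f)=h(f)$.

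The only delicate point is the verification that $f$ falls inside the scope of Proposition~\ref{countinuity of entropy}, namely that $f$ carries a hyperbolic MME. Without this input one would only obtain $h_\mu(f)\ge \limsup_n h(f_n)$, which a priori could be strictly smaller than $h(f)$; here it is supplied by the preceding lemmas of this section together with the standing hypothesis on the spectral gap $h(f)>\max\{h^u(f),h^s(f)\}$.
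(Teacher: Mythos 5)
Your argument is correct and follows essentially the same route as the paper: both proofs combine continuity of topological entropy at $f$ (Proposition~\ref{countinuity of entropy}) with upper semicontinuity of metric entropy along the convergent pair $(f_n,\mu_n)\to(f,\mu)$ (the paper invokes Proposition~\ref{upper-semicontinuous, entropy expansive} directly; you invoke its specialization Theorem~\ref{Upper semi continuity of metric entropy}). The one added value of your write-up is the explicit verification that $f$ actually admits a hyperbolic MME so that Proposition~\ref{countinuity of entropy} applies, a point the paper's terse proof leaves implicit by relying on the standing hypothesis that $f$ is as in Theorem~\ref{theo-entropy-condition-dimension-2}.
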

\begin{proof}
    Since $h_{\mu_n}(f_n)=h(f_n)$, by Proposition \ref{countinuity of entropy} we have that 

    \begin{equation}\label{converging to entropy of f}
        \lim_nh_{\mu_n}(f_n)=h(f).
    \end{equation}

    Aditionally, by Proposition \ref{upper-semicontinuous, entropy expansive}
\begin{equation}
   \lim_nh_{\mu_n}(f_n)\leq h_\mu(f). 
\end{equation}
    So, by \eqref{converging to entropy of f} and \eqref{comparation metric entropy} we have that 

$$h_\mu(f)=h(f),$$
as required.
\end{proof}

We denote $E^{s+1}:=E^{ss}\oplus E_1$ and $E^{u+2}:= E_2\oplus E^{uu}$.

\begin{proof}[Proof of Theorem \ref{theo-entropy-condition-dimension-2}]
    Let $f:M\to M$ be a diffeomorphism as in Theorem \ref{theo-entropy-condition-dimension-2}. By Lemma \ref{Uniformly far away from zero} there exist $\alpha>0$ and a $C^1$ neighborhood of $f$ such that each ergodic measure of maximal entropy of a $C^r,  r>1$, diffeomorphism is $\alpha$--hyperbolic with $\operatorname{index}$ equal to $s+1$. Then, by  Theorem \ref{MainTheoA}, each $C^r$ diffeomorphisms into $\cU$ has finitely many measures of maximal entropy.  

    Let $k>0$ be the number of measures of maximal entropy of $f$. Let $\nu_1, \ldots, \nu_k$ be these ergodic measures of maximal entropy of $f$, and $p_1(f), \ldots, p_{k}(f)$ be periodic points of $f$ with orbits $\mathcal{O}_1\sim \nu_1, \ldots, \mathcal{O}_k\sim \nu_k$, given by Proposition \ref{HoclinicClassRelation}. 

    Consider a sequence $f_n$ of $C^r$ diffeomorphisms converging to $f$ in the $C^1$ topology.
    Let $p_i(f_n)$ be the unique hyperbolic periodic point of $f_n$ derived from the structural stability of $p_i(f)$.

    \begin{lemma}\label{key-Lemma-for-bounded}
 If $\mu_n \in \cM_{f_n}^e$ is a MME  for $f_n$, then there exist an index $j$ and a subsequence $(\mu_{n_l})_{l\geq 0}$ such that $\mu_{n_l}\sim p_j(f_n)$ for every $l \geq 0$.
    \end{lemma}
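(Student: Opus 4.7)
The plan is to extract a weak-* limit $\mu$ of a subsequence of $(\mu_n)$, identify $\mu$ as a (possibly non-ergodic) MME of $f$, read off one of its ergodic components $\nu_j$ with positive weight, and then transfer the homoclinic relation $\nu_j\sim\mathcal{O}_j$ to a homoclinic relation $\mu_{n_l}\sim p_j(f_{n_l})$ via the uniform-size Pesin manifolds provided by Theorem \ref{Invariant-manifolds}.

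First, by compactness of the space of invariant probabilities, I would pass to a subsequence (still written $\mu_{n_l}$) with $\mu_{n_l}\to\mu\in\mathcal{M}_f$. Lemma \ref{comparation metric entropy} gives $h_\mu(f)=h(f)$; since every ergodic component of an MME has entropy $h(f)$ almost surely, and the ergodic MMEs of $f$ are exactly $\nu_1,\dots,\nu_k$, the ergodic decomposition reads $\mu=\sum_{i=1}^k c_i\nu_i$ with $c_i\geq 0$ and $\sum c_i=1$. Fix an index $j$ with $c_j>0$; this is the $j$ the lemma will produce.

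Next, Proposition \ref{HoclinicClassRelation} furnishes $\nu_j\sim\mathcal{O}_j$. Combined with Theorem \ref{Invariant-manifolds} applied to the dominated splitting $E^{s+1}\oplus E^{u+2}$ (legitimate since Lemma \ref{Uniformly far away from zero} guarantees every MME is $\alpha$-hyperbolic of index $\dim E^{s+1}$), this gives, for $\ell$ large, a point $x_j\in\mathrm{supp}(\nu_j)\cap B_\ell(f)$ whose uniform-size Pesin disks $W^s_r(f,x_j)$ and $W^u_r(f,x_j)$ transversely meet $W^u_{\mathrm{loc}}(p_j(f))$ and $W^s_{\mathrm{loc}}(p_j(f))$. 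A priori the homoclinic relation only guarantees transverse intersections of the full Pesin manifolds (which can be arbitrarily long), so to bring such an intersection into a disk of radius $r$ one iterates a $\nu_j$-typical point a fixed number of times and uses Poincar\'e recurrence to land back in a Pesin block of possibly larger level. Since transversality is a $C^1$-open condition, Theorem \ref{Invariant-manifolds} together with hyperbolic continuation $p_j(g)\to p_j(f)$ then yields an open neighborhood $U$ of $x_j$ with $\mu(\partial U)=0$, a level $\ell'\geq\ell$, and $\delta>0$ such that for every $g$ with $\|g-f\|_{C^1}<\delta$ and every $y\in U\cap B_{\ell'}(g)$, the local disks $W^{s/u}_r(g,y)$ intersect $W^{u/s}_{\mathrm{loc}}(g,p_j(g))$ transversely.

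Finally, I would show that $\mu_{n_l}$ charges this robust intersection region. Because $\mu\geq c_j\nu_j$, the neighborhood $U$ can be chosen with $\mu(U)>0$; by the portmanteau theorem, $\mu_{n_l}(U)\to\mu(U)>0$. Lemma \ref{Uniform Pesin Blocks} applied to $\mu_{n_l}\to\mu$ yields $\mu_{n_l}(B_{\ell'}(f))>1-\varepsilon$ for $l$ large, and because the functional defining $B_\ell(\cdot)$ depends continuously on the diffeomorphism in the $C^1$ topology, after a harmless enlargement of $\ell'$ this upgrades to $\mu_{n_l}(B_{\ell'}(f_{n_l}))>1-\varepsilon$. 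Choosing $\varepsilon<\tfrac{1}{2}\mu(U)$ then forces $\mu_{n_l}(U\cap B_{\ell'}(f_{n_l}))>0$, and the transversality persistence from the previous paragraph gives $\mu_{n_l}\sim p_j(f_{n_l})$ for every sufficiently large $l$, as required.

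The hardest step will be the one in the second paragraph: producing, inside a definite Pesin block $B_\ell(f)$, a point $x_j$ whose uniform-size local disks already witness the transverse intersections with $W^{u/s}_{\mathrm{loc}}(p_j(f))$, as the homoclinic relation $\nu_j\sim\mathcal{O}_j$ only supplies intersections of the full Pesin manifolds. The secondary technicality, the comparison between $B_\ell(f)$ and $B_\ell(f_{n_l})$, is absorbed into the $C^1$-continuity of the defining functional at the cost of slightly enlarging $\ell$.
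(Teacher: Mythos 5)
Your skeleton (extract a weak-$*$ limit $\mu$, identify it as an MME, decompose $\mu=\sum_ic_i\nu_i$, pick $j$ with $c_j>0$, then pass the homoclinic relation to $p_j(f_{n_l})$ through Pesin blocks and portmanteau) is the right one, and the first and third paragraphs are essentially sound. The problem is the second paragraph, which you yourself flag as the hardest step: you try to manufacture a $\nu_j$-generic point $x_j\in B_\ell(f)$ whose \emph{local} disks $W^{s/u}_r(f,x_j)$ already hit $W^{u/s}_{\mathrm{loc}}(p_j(f))$ by starting from the homoclinic intersection of the \emph{full} Pesin manifolds and then ``iterating and Poincar\'e-recurring.'' That mechanism is not justified: iterating forward does bring the intersection point toward $p_j(f)$ along the unstable leaf, but it expands the unstable direction and you lose any control that the intersection stays inside a disk of the fixed radius $r$ around the recurring iterate; the $\lambda$-lemma geometry needed to make this precise is nontrivial and is not supplied. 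Moreover you never verify that the hyperbolic continuation $p_j(f_{n_l})$ itself lies in $B_{\ell'}(f_{n_l})$, which is needed for Theorem~\ref{Invariant-manifolds} to yield transversality between $W^{s/u}_r(f_{n_l},y)$ and the manifolds of $p_j(f_{n_l})$.

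The detour is unnecessary. Since $\nu_j\sim\mathcal{O}_j$, Proposition~\ref{BCS-critirium} gives $\operatorname{supp}(\nu_j)=HC(\mathcal{O}_j)\ni p_j(f)$, so you can take $U$ to be a small ball around $p_j(f)$ itself: $\mu(U)\ge c_j\nu_j(U)>0$ automatically. The orbit of $p_j(f)$ is hyperbolic with the correct index, so for $\ell$ large $\mathcal{O}_j\subset B_\ell(f)$, and since this membership for a periodic orbit is a finite-time open condition on the cocycle, $\mathcal{O}(p_j(f_{n_l}))\subset B_\ell(f_{n_l})$ for $l$ large as well. Then Lemma~\ref{Uniform Pesin Blocks} plus portmanteau give $\mu_{n_l}(U\cap B_\ell(f_{n_l}))>0$, and any $y$ there satisfies $d(y,p_j(f_{n_l}))<\delta$, so Theorem~\ref{Invariant-manifolds} directly produces the transverse intersections of the uniform-size Pesin disks of $y$ and $p_j(f_{n_l})$; there is no need to first chase a transverse homoclinic point of $\nu_j$ into a local disk. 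One further small caveat: the ``harmless enlargement of $\ell'$'' you invoke to pass from $\mu_{n_l}(B_{\ell'}(f))$ to $\mu_{n_l}(B_{\ell'}(f_{n_l}))$ is not a set-inclusion argument (Pesin blocks involve a supremum over all $n$, so $C^1$-closeness of $f_{n_l}$ to $f$ does not give containment); what Lemma~\ref{Uniform Pesin Blocks} actually delivers, via the $C^0$-convergence of $Df_{n_l}^\ell|E^{cs}$ and the Maximal Ergodic Theorem, is a direct lower bound on $\mu_{n_l}(B_\ell(f_{n_l}))$, and that is the bound one should quote.
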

\begin{proof} With the results established in Lemmas \ref{Uniformly far away from zero}, \ref{comparation metric entropy}, and \ref{Uniform Pesin Blocks}, the proof follows by employing the same argument as in \cite[Lemma 5.4]{MP24}. 
\end{proof}

Returning to the proof of Theorem \ref{theo-entropy-condition-dimension-2}, for $n$ large enough each $f_n$ has at most $k$ measures of maximal entropy, otherwise, by Lemma \ref{key-Lemma-for-bounded}, there would exit a subsequence $f_{n_l}$ such that every $f_{n_l}$ has at least two different ergodic measures of maximal entropy that are homoclinically related to the same $p_i(f_n)$ which is a contradiction. Thus, the proof is concluded.

\end{proof}

\section{Derived from Anosov}

Recall that $\DA$, with $r>1$, is the set of partially hyperbolic $C^r$ diffeomorphisms of $\mathbb{T}^4$ that admit a dominated splitting $E^{ss}\oplus E^1\oplus E^2 \oplus E^{uu}$, where each subbundle is one-dimensional, and whose linear part $A$ has eigenvalues satisfying $|\lambda^{ss}|<|\lambda^s|<1<|\lambda^u|<|\lambda^{uu}|$   with corresponding eigenspaces $E^{ss}_A, E^{s}_A, E^{u}_A$, and $E^{uu}_A$ respectively.

Let $\pi:\mathbb{R}^4\to \mathbb{T}^4$ be the natural covering map. For $*=s,u,ss,uu$, denote by $\tilde{f}$, $\tilde{\mathcal{F}}^*$ and $\tilde{W}^*$ the lifts of $f$, $\mathcal{F}^*$ and $W^*$, respectively. Here, $\mathcal{F}^{ss}$ and $\mathcal{F}^{uu}$ denote the stable and unstable foliations of $f$, which are tangent to $E^{ss}$ and $E^{uu}$, respectively.

Given $f\in \DA$, we know from \cite{Fra69} that there exists a continuous and surjective map $H: \mathbb{R}^4\to \mathbb{R}^4$ such that  
$A\circ H = H\circ \tilde{f}$,  
and a constant $K>0$ such that  
\begin{equation}\label{close-to-identity}
    d_{\mathbb{R}^4}(H(x),x) < K,
\end{equation}
where $d_{\mathbb{R}^4}$ denotes the Euclidean norm in $\mathbb{R}^4$.

\begin{definition}\label{def-quasi-isometric}
A foliation $\mathcal{F}$ is \emph{quasi-isometric} if there exists a constant $Q > 1$ such that, for any two points $x, y \in M$ lying on the same leaf $\cF(z)$, with $z\in M$, satisfies  
$d_{\mathcal{F}(z)}(x,y) \leq  Q d_M(x,y) + Q$,  
where $d_{\mathcal{F}}$ is the distance along the leaf and $d_M$ is the distance on the manifold.    
\end{definition}

\begin{proposition}\label{quasi-isometric-implies-hu-pequeno}
Let $f\in \DA$ be such that $\tilde{\mathcal{F}}^{uu}$, the lift of unstable foliation, is quasi-isometric. Then,  
$h^u(f) \leq \log |\lambda^{uu}|$.
\end{proposition}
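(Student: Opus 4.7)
The plan is to identify $h^u(f)$ with the exponential growth rate of the length of unstable arcs (the unstable volume growth) and then control this growth directly using the quasi-isometric hypothesis together with the semi-conjugacy $H$ to the linear part. Since $E^{uu}$ is one-dimensional, the unstable leaves are curves and ``volume'' along $\mathcal{F}^{uu}$ reduces to arc length, which makes the argument particularly clean.

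First I would invoke the characterization $h^u(f) = \lim_{n\to\infty} \frac{1}{n}\log \operatorname{length}(f^n(\gamma))$, uniformly in an unstable arc $\gamma$ of fixed length $\delta$, coming from Theorem C of \cite{HHW17}. Fix such an arc $\gamma \subset W^{uu}(x,\delta)$, lift it to an arc $\tilde{\gamma} \subset \tilde{W}^{uu}(\tilde{x})$ with endpoints $\tilde{x}, \tilde{y}$, and note that $\tilde{f}^n(\tilde{\gamma})$ is again contained in a single leaf of $\tilde{\mathcal{F}}^{uu}$ with endpoints $\tilde{f}^n(\tilde{x}), \tilde{f}^n(\tilde{y})$. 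The quasi-isometric hypothesis then gives
\begin{equation*}
\operatorname{length}(\tilde{f}^n(\tilde{\gamma})) \leq Q\, d_{\mathbb{R}^4}\bigl(\tilde{f}^n(\tilde{x}), \tilde{f}^n(\tilde{y})\bigr) + Q.
\end{equation*}

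Next I would exchange the ambient distance in $\mathbb{R}^4$ for the linear dynamics using the semi-conjugacy. By \eqref{close-to-identity},
\begin{equation*}
d_{\mathbb{R}^4}\bigl(\tilde{f}^n(\tilde{x}), \tilde{f}^n(\tilde{y})\bigr) \leq \bigl\| H(\tilde{f}^n(\tilde{x})) - H(\tilde{f}^n(\tilde{y})) \bigr\| + 2K = \bigl\| A^n\bigl(H(\tilde{x})-H(\tilde{y})\bigr) \bigr\| + 2K.
\end{equation*}
Because $|\lambda^{uu}|$ is the spectral radius of $A$, for every $\varepsilon > 0$ there is $C_\varepsilon$ with $\|A^n v\| \leq C_\varepsilon (|\lambda^{uu}|+\varepsilon)^n \|v\|$ for all $v \in \mathbb{R}^4$, and applying \eqref{close-to-identity} once more yields $\|H(\tilde{x})-H(\tilde{y})\| \leq \operatorname{length}(\tilde{\gamma}) + 2K$. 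Combining these bounds,
\begin{equation*}
\operatorname{length}(\tilde{f}^n(\tilde{\gamma})) \leq Q C_\varepsilon (|\lambda^{uu}|+\varepsilon)^n \bigl(\operatorname{length}(\tilde{\gamma}) + 2K\bigr) + 2KQ + Q.
\end{equation*}
Since $\pi$ is a local isometry, $\operatorname{length}(f^n(\gamma)) = \operatorname{length}(\tilde{f}^n(\tilde{\gamma}))$, so taking $\frac{1}{n}\log$ and $n\to\infty$ gives growth rate at most $\log(|\lambda^{uu}|+\varepsilon)$; letting $\varepsilon \to 0$ and taking the supremum over basepoints yields $h^u(f) \leq \log|\lambda^{uu}|$.

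The only real subtlety I anticipate is the passage through the semi-conjugacy: $H$ is merely continuous and surjective, not a homeomorphism, so one cannot invert it; but the argument above only uses $H$ in one direction and the bound $|H(z)-z|<K$, which suffices. A minor point to verify is the uniformity of the volume-growth characterization over arcs of bounded length, so that the supremum in the definition of $h^u(f)$ truly inherits the bound; this follows from the standard fact that unstable volume growth along a one-dimensional foliation is independent of the chosen arc when measured exponentially.
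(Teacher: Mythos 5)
Your proposal is correct and follows essentially the same route as the paper: both identify $h^u(f)$ with unstable volume growth via \cite{HHW17}, lift arcs to the universal cover, apply the quasi-isometric bound to replace leaf distance by Euclidean distance, and then push through the semi-conjugacy $H$ using $\|H - \mathrm{id}\|_\infty < K$ together with the growth bound on $\|A^n\|$. The only cosmetic difference is that the paper works directly with the leaf distance $d_{W^{uu}_z}(f^n x, f^n y)$ and takes $\limsup$, while you phrase things in terms of arc length and insert the $(|\lambda^{uu}|+\varepsilon)^n$ bound explicitly — but for a one-dimensional foliation these quantities coincide, so the arguments are the same.
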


\begin{proof}
   Recall that by \cite{HHW17} the volume growth is equal to the unstable entropy. Hence, as $\cF^{uu}$ is one dimensional, it suffices to show that for any two points $x$ and $y$ on the same unstable manifold $W^{uu}(z)$ with $z\in \mathbb{T}^4$, it holds
    $$
    \limsup_{n\to \infty}\frac{1}{n}\log d_{W^{uu}_z}(f^n x, f^n y) \leq \log |\lambda^{uu}|.
    $$

    Let $x,y\in \mathbb{T}^4$ lie on the same unstable manifold $W^{uu}_z$ of the point $z\in \mathbb{T}^4$,
    and let $\tilde{W}^{uu}_z$, $\tilde{f}$, $\tilde{x}$, and $\tilde{y}$ denote the lifts of $W^{uu}_z$, $f$, $x$, and $y$, respectively. As $W^{uu}_z$ is homeomorphic to $\mathbb{R}$ we have
    $$
    d_{W^{uu}_z}(f^n x, f^n y) = d_{\tilde{W}^{uu}_z}(\tilde{f}^n \tilde{x}, \tilde{f}^n \tilde{y}).
    $$

    Since $\tilde{\cF}^{uu}$ is quasi-isometric, there exists a constant $Q > 0$ such that
    $$
    d_{W^{uu}_z}(f^n x, f^n y) \leq Q d_{\mathbb{R}^4}(\tilde{f}^n \tilde{x}, \tilde{f}^n \tilde{y}) + Q.
    $$

    From \eqref{close-to-identity}, for every $w\in \mathbb{R}^4$, we obtain
    \begin{equation*}
        d_{W^{uu}_z}(f^n x, f^n y) \leq 2 K Q + Q d_{\mathbb{R}^4}(H \tilde{f}^n \tilde{x}, H \tilde{f}^n \tilde{y}) + Q.
    \end{equation*}

    Since $H \circ \tilde{f}^n = A^n \circ H$, we have
    $$
    d_{W^{uu}_z}(f^n x, f^n y) \leq 2 K Q + Q d_{\mathbb{R}^4}(A^n H \tilde{x}, A^n H \tilde{y}) + Q.
    $$

    Taking the upper limit, we conclude that
    $$
    \limsup_{n\to \infty} \frac{1}{n} \log d_{W^u_z}(f^n x, f^n y) \leq \log |\lambda^{uu}|.
    $$

    This implies that $h^u(f) \leq \log |\lambda^{uu}|$, completing the proof.
\end{proof}

Before proving Theorem \ref{theo-DA}, we introduce the following property. We define $\Pi^\sigma_x$ to be the projection of $\mathbb{R}^4$ onto $E^\sigma_A + x$ along the complementary subbundles of $A$, where $\sigma = ss, uu$. Let $H_\sigma := \Pi^\sigma_x \circ H$.

\begin{definition} For $\sigma = ss, uu$, we say that $f \in \DA$ is \emph{$\sigma$-proper} if, for every $R > 0$, there exists $R' > 0$ such that, for every $x \in \mathbb{R}^4$, if $y \in \tilde{W}^\sigma(x)$ and $d(H_\sigma(x), H_\sigma(y)) < R$, then $d_\sigma(x, y) < R'$. \end{definition}

\begin{proposition}{\cite[Proposition 7.1]{FPS14}}\label{proper-implies-quasi-isometric} If $f \in \DA$ is isotopic to $A$ along a path of partially hyperbolic diffeomorphisms and is $\sigma$-proper for $\sigma = ss, uu$, then the foliation $W^\sigma$ is quasi-isometric. \end{proposition}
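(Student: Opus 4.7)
The strategy is to establish quasi-isometry of $\tilde{\cF}^{uu}$; the case $\sigma=ss$ is symmetric by applying the same argument to $f^{-1}$. I fix $x,y$ on the same leaf of $\tilde{\cF}^{uu}$ and set $D=d_{\mathbb{R}^4}(x,y)$ and $L=d_{uu}(x,y)$. The goal is to find $Q>1$, independent of $x$ and $y$, with $L\leq QD+Q$. Three ingredients drive the argument: Franks' estimate $d_{\mathbb{R}^4}(H(z),z)<K$; the $uu$-properness hypothesis, giving a monotone function $R\mapsto R'(R)$ such that $d(H_{uu}(x),H_{uu}(y))<R$ implies $L<R'(R)$; and the isotopy hypothesis, from which I extract uniform control on the angle between $E^{uu}_f$ and $E^{uu}_A$.

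First I would exploit the isotopy. Along a continuous path $f_t$, $t\in[0,1]$, of partially hyperbolic diffeomorphisms joining $A$ to $f$ with one-dimensional strong unstable bundle, the family $E^{uu}_{f_t}$ varies continuously on $\mathbb{T}^4\times[0,1]$; by compactness, the angle $\angle(E^{uu}_{f_t}(x),E^{uu}_A)$ stays uniformly bounded. Hence there is a fixed cone $\cC^{uu}$ around $E^{uu}_A$ containing every direction of $E^{uu}_f$, which prevents leaves of $\tilde{\cF}^{uu}$ from wrapping transversely to $E^{uu}_A$: the projection of any leaf onto $E^{uu}_A$ is locally monotone up to bounded reversal, so leaf arc length is coarsely comparable to $E^{uu}_A$-displacement.

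Next I would combine Franks with properness and upgrade the resulting bound to linear. Since $\Pi^{uu}$ is a bounded linear projection, $d(H_{uu}(x),H_{uu}(y))\leq C\,(D+2K)$ for a constant $C$ depending only on $A$, so $uu$-properness yields the a priori bound $L\leq R'(C(D+2K))$, which is merely monotone. To promote $R'$ to linear, I would pick the smallest $n\geq 0$ with $d_{uu}(\tilde{f}^{-n}x,\tilde{f}^{-n}y)\leq 1$; such $n$ exists by uniform contraction of $\tilde{f}^{-1}$ on $E^{uu}$. At that scale, properness applied to $\tilde{f}^{-n}x$ and $\tilde{f}^{-n}y$ gives a uniform bound on $d(H_{uu}(\tilde{f}^{-n}x),H_{uu}(\tilde{f}^{-n}y))$; the intertwining $H\circ\tilde{f}^n=A^n\circ H$, together with $A|_{E^{uu}_A}=\lambda^{uu}\,\mathrm{Id}$, then gives $d(H_{uu}(x),H_{uu}(y))\leq \mathrm{const}\cdot|\lambda^{uu}|^n$. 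Comparing with $L\leq \|D\tilde{f}|_{E^{uu}}\|_\infty^{\,n}$ and using the isotopy angle bound to identify the expansion rate of $\tilde{f}$ on $E^{uu}$ with $|\lambda^{uu}|$ yields the linear estimate $L\leq Q(D+1)$.

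The hard part will be that very last step — converting the abstract monotone function from properness into a linear estimate. Without the isotopy, leaves of $\tilde{\cF}^{uu}$ could in principle drift transversely to $E^{uu}_A$, and backward iteration would produce only a polynomial comparison between $L$ and $D$. The isotopy hypothesis is exactly what confines $E^{uu}_f$ to a cone around $E^{uu}_A$ so that leaf expansion of $\tilde{f}$ matches the scalar rate $\lambda^{uu}$ of $A$ through the semiconjugacy, closing the argument.
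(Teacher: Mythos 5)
Your attempt identifies the right ingredients (Franks' estimate, properness, some input from the isotopy) but the step that is supposed to close the argument to a \emph{linear} bound fails in two places. First, the use of properness at $\tilde f^{-n}x,\tilde f^{-n}y$ is in the wrong direction: properness gives $d(H_{uu}(x),H_{uu}(y))<R\Rightarrow d_{uu}(x,y)<R'$, so it does not produce the bound on $d(H_{uu})$ that you claim; what you actually need there is the \emph{contrapositive} one step earlier (by minimality of $n$, $d_{uu}(\tilde f^{-(n-1)}x,\tilde f^{-(n-1)}y)\geq R'$, hence $d(H_{uu}(\tilde f^{-(n-1)}x),H_{uu}(\tilde f^{-(n-1)}y))\geq R$), because only a \emph{lower} bound $d(H_{uu}(x),H_{uu}(y))\geq R\,|\lambda^{uu}|^{n-1}$ can be played off against the Franks upper bound $d(H_{uu}(x),H_{uu}(y))\leq\|\Pi^{uu}\|(d_{\mathbb{R}^4}(x,y)+2K)$; the upper bound you wrote is useless. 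Second, and decisively, the claim that the isotopy cone condition ``identifies the expansion rate of $\tilde f$ on $E^{uu}$ with $|\lambda^{uu}|$'' is false: a cone condition on $E^{uu}_f$ constrains directions, not rates, and $\lambda_{\max}:=\sup_x\|Df(x)|_{E^{uu}_f}\|$ can exceed $|\lambda^{uu}|$ by an arbitrary amount along an isotopy through partially hyperbolic maps. Even with the corrected iteration one only obtains $|\lambda^{uu}|^{n-1}\leq C(D+2K)$ together with $L\leq\lambda_{\max}^{\,n}R'$, hence $L\leq C'(D+2K)^{\log\lambda_{\max}/\log|\lambda^{uu}|}$, a polynomial and generally superlinear estimate that is not quasi-isometry.

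The missing idea is that the isotopy enters through \emph{monotonicity}, not through a rate comparison: along an isotopy of partially hyperbolic diffeomorphisms the restriction of $H_{uu}$ to a $uu$-leaf remains injective (hence monotone, since the target is a line), because this holds for $A$ and injectivity persists along the path. Once monotonicity is available, no iteration of $\tilde f$ is needed. Subdivide the leaf arc from $x$ to $y$ into $\lfloor L/R'\rfloor$ sub-arcs of $d_{uu}$-length $R'$; the contrapositive of properness forces each sub-arc to displace $H_{uu}$ by at least $R$, and monotonicity guarantees these displacements add rather than cancel, so $d(H_{uu}(x),H_{uu}(y))\geq R(L/R'-1)$. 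Comparing with the Franks estimate immediately yields the linear bound $L\leq Q\,d_{\mathbb{R}^4}(x,y)+Q$. Without this monotonicity input, the exponential rate of $\tilde f$ along $E^{uu}_f$ and the scalar $\lambda^{uu}$ of $A$ are genuinely independent data, and your iteration scheme cannot be made to close.
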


\begin{proof}[Proof of Theorem \ref{theo-DA}] Let $f \in \DA$ be such that the stable and unstable foliations are quasi-isometric. Denote by $f_A$ the diffeomorphism induced by $A$ on $\mathbb{T}^4$. Since $f$ and $f_A$ are semi-conjugate, we have
$$ h(f) \geq h(f_A) = \log|\lambda^{uu}| + \log|\lambda^{u}| = \log|\lambda^{ss}| + \log|\lambda^{s}|. $$

By Proposition \ref{quasi-isometric-implies-hu-pequeno}, it follows that  

$$ h^u(f) \leq \log|\lambda^{uu}| \quad \text{and} \quad h^s(f) \leq \log|\lambda^{ss}|. $$

Combining these inequalities, we obtain  

$$ h^u(f) < h(f) \quad \text{and} \quad h^s(f) < h(f). $$  

By Theorem \ref{theo-entropy-condition-dimension-2}, this proves the first part of the theorem.

For the second part we proceed as follows.
The proof of \cite[Theorem 6.1]{FPS14} implies that any $f$ that is isotopic to $A$ along a path of partially hyperbolic diffeomorphisms is $\sigma$-proper for $\sigma = ss, uu$. By Proposition \ref{proper-implies-quasi-isometric}, this ensures that the stable and unstable foliations of $f$ are quasi-isometric. 

Therefore, we conclude that if $f$ is isotopic to $A$ along a path of partially hyperbolic diffeomorphisms then $\tilde{\cF}^*$, $*=ss,uu$, is quasi isometric. So, by the previous part, we complete the proof of Theorem \ref{theo-DA}.  
\end{proof}


\section{Proof of Theorem \ref{MainTheo2index} and  \ref{theo-entropy-condition-different index}}
Let $f: M \to M$ be a diffeomorphism on a compact manifold $M$ admitting a dominated splitting 
$$E^{ss} \oplus E_1 \oplus \cdots \oplus E_k \oplus E^{uu},$$ 
where each $E_i$ is one dimensional.

For any $x \in M$ and $i \in {1, \dots, k}$  define:

\begin{equation*}
\begin{split}
E^{s,i}(x) := E^{ss}(x) \oplus E_1(x) \oplus \cdots \oplus E_i(x)\quad
\text{and}
\\\quad E^{u,i}(x) := E_{i+1}(x) \oplus \cdots \oplus E_k(x) \oplus E^{uu}(x).
\end{split}
\end{equation*}

\begin{lemma}\label{Pliss-Lemma}
    Given $\alpha > 0$ and $i \in \{1, \cdots, k\}$, there exist $\ell' \in \mathbb{Z}^+$ and $\beta > 0$ such that 
    $$\mu\left(B^s_\ell(f, E^{s,i})\right) > \beta$$ 
    for all $\alpha$--hyperbolic measures $\mu$ with index $\operatorname{dim}(E^{ss}) + i$ and $\ell>\ell'$.
\end{lemma}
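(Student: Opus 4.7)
The plan is to apply the quantitative Pliss estimate of Lemma~\ref{better-Size-of-PesinBlocks} with $E^{cs}:=E^{s,i}$, verifying its three hypotheses (C1)--(C3) with constants independent of $\mu$.

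First I would show that for $\mu$ in the class, $\lambda^{s,i}(\mu)\le-\alpha$. Since $\mu$ is $\alpha$-hyperbolic and has $\operatorname{index}(\mu)=\dim(E^{ss})+i$, the Oseledets decomposition refines the dominated splitting $E^{ss}\oplus E_1\oplus\cdots\oplus E_k\oplus E^{uu}$ and the induced ordering of Lyapunov exponents forces every exponent along $E^{s,i}$ to be strictly below every exponent along $E^{u,i}$. Hence if any exponent along $E^{u,i}$ were negative, all $\dim(E^{s,i})$ exponents along $E^{s,i}$ would be negative too and the index would strictly exceed $\dim(E^{ss})+i$; this forces all exponents along $E^{u,i}$ to be $\ge\alpha$ and all exponents along $E^{s,i}$ to be $\le-\alpha$, so in particular the top Lyapunov exponent $\lambda^{s,i}(\mu)$ along $E^{s,i}$ satisfies $\lambda^{s,i}(\mu)\le-\alpha$. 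Consequently $\mu(\Lambda^{cs}_\alpha(f,E^{s,i}))=1$, so (C1) holds with $\eta=0$.

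Next I would use Kingman's subadditive ergodic theorem: for $\mu$-a.e.\ $x$, $\frac{1}{\ell}\log\|Df^\ell(x)|E^{s,i}\|\to\lambda^{s,i}(\mu)\le-\alpha$, and by bounded convergence
\[
F_\ell(\mu):=\tfrac{1}{\ell}\int\log\|Df^{\ell}|E^{s,i}\|\,d\mu\;\xrightarrow[\ell\to\infty]{}\;\lambda^{s,i}(\mu)\le-\alpha.
\]
Fix a small $b>0$. If $\ell$ is large enough that $F_\ell(\mu)\le-\alpha+b\alpha$, then the integrand in (C3)---supported on $\{\tfrac{1}{\ell}\log\|Df^{\ell}|E^{s,i}\|>-\alpha\}$---is pointwise nonnegative with integral at most $F_\ell(\mu)+\alpha\le b\alpha$, yielding (C3); condition (C2) reduces to $\ell>1/(\alpha b)$. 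Lemma~\ref{better-Size-of-PesinBlocks} then gives $\mu(B_\ell^s(f,E^{s,i}))>1-2b$, and I take $\beta:=1-2b$.

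The main obstacle is the \emph{uniform} choice of $\ell'$ across the class of $\alpha$-hyperbolic ergodic measures of index $\dim(E^{ss})+i$, since the limit function $\lambda^{s,i}(\cdot)$ is only upper semicontinuous on $\cM_f$ and Dini's theorem does not apply directly. My strategy is to exploit the subadditivity $a_{\ell+\ell'}\le a_\ell+a_{\ell'}$, which yields $F_{m\ell}\le F_\ell$ along any geometric subsequence and makes $(F_{2^n})_n$ a decreasing sequence of continuous functions on the weak-$*$ compact space $\cM_f$. The nested open sets $U_n:=\{\nu\in\cM_f:F_{2^n}(\nu)<-\alpha+b\alpha/2\}$ cover every $\mu$ in the class by the pointwise convergence above, and a compactness argument on a suitable weak-$*$ closed enlargement of the class produces a single $n_0$ with $U_{n_0}$ containing all measures under consideration. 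Setting $\ell':=\max\{2^{n_0},\lceil 1/(\alpha b)\rceil\}$ then delivers the uniform lower bound $\mu(B_\ell^s(f,E^{s,i}))>\beta$ for every $\ell\ge\ell'$ and every $\mu$ in the class.
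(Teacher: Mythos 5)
Your reductions in the first two paragraphs (showing $\lambda^{s,i}(\mu)\le -\alpha$, hence $\mu(\Lambda^{cs}_\alpha(f,E^{s,i}))=1$, and the pointwise estimate for (C3) via Kingman) are fine, but the plan diverges completely from the paper's at the crucial step, and the divergence opens a genuine gap. The paper does not use Lemma~\ref{better-Size-of-PesinBlocks} here at all: it fixes $\rho$ with $-\alpha<\log\rho<0$, invokes the Pliss Lemma to get a uniform constant $\beta>0$ (depending only on $\alpha$, $\log\rho$, and $\sup_M\log\|Df|_{E^{s,i}}\|$) such that $\mu(A(f))>\beta$ for every $\alpha$-hyperbolic ergodic $\mu$ of the right index, and then shows $A(f)\subset B^s_\ell(f,E^{s,i})$ once $\ell\log\rho<-1$. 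The uniformity of $\beta$ comes for free from the combinatorial Pliss estimate, and the lemma only claims a uniform \emph{positive} lower bound, not mass close to one.

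Your compactness step does not close. The class $\{\mu\in\cM_f^e:\mu\text{ is }\alpha\text{-hyperbolic},\ \operatorname{index}(\mu)=\dim E^{ss}+i\}$ is not weak-$*$ closed, and no ``closed enlargement'' repairs this: any closed superset contains the weak-$*$ closure, and that closure may contain measures $\nu$ with $\lambda^{s,i}(\nu)>-\alpha+b\alpha/2$, or even $\geq 0$. Upper semicontinuity of $\lambda^{s,i}=\inf_\ell F_\ell$ only gives $\limsup_n\lambda^{s,i}(\mu_n)\le\lambda^{s,i}(\nu)$, i.e., a lower bound on $\lambda^{s,i}(\nu)$, so it does not keep $\nu$ below $-\alpha$. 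Such a $\nu$ lies in no $U_n$, the nested sets fail to cover, and compactness yields nothing. In fact, the stronger conclusion you aim for — $\mu(B^s_\ell)>1-2b$ with $b$ small, uniformly over the class — is false in general: take $\mu_n$ in the class converging weak-$*$ to such a $\nu$; for any fixed $\ell_0$, $F_{\ell_0}(\mu_n)\to F_{\ell_0}(\nu)$, which need not be close to $-\alpha$, so (C3) fails for $\mu_n$ with $n$ large. (A secondary issue, even if the cover existed: subadditivity controls $F_{2^n}$ only along the geometric subsequence, so extending the bound to all $\ell\ge\ell'$ needs an extra interpolation step.) The fix is exactly the paper's: replace the ``close to one'' strategy by the Pliss density argument, which produces a uniform $\beta$ bounded away from zero without any compactness assumption.
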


\begin{proof}
    Fix $\rho > 0$ such that $-\alpha < \log \rho < 0$, and define 
    $$A(f) = \left\{x \in M : \|Df^n(x)|_{E^{s,i}}\| \leq \rho^n \text{ for all } n \in \mathbb{Z}^+\right\}.$$

    Exactly as in \cite[Lemma 4.3]{MP24}, using the Pliss Lemma~\cite{Pli72}, there exists $\beta > 0$ such that 
    $$\mu(A(f)) > \beta$$ 
    for every $\alpha$--hyperbolic measure $\mu$ with index $\operatorname{dim}(E^s) + i$.

    Now, choose $\ell' > 0$ such that $\ell' \log(\rho) < -1$. 
    Observe that by the dominated splitting and the fact that $E^i$ is one dimensional we have
    $$
    \|Df^{\ell n}(x)|_{E^{s,i}}\|=\|Df^{\ell n}(x)|_{E^{i}}\|
    =\|\prod_{j=0}^n Df^{\ell}_{f^{j\ell}}(x)|_{E^{i}}\|=\|\prod_{j=0}^n Df^{\ell}_{f^{j\ell}}(x)|_{E^{s,i}}\|.$$

    If $x \in A(f)$ and $\ell>\ell'$, we conclude that 
    $$
    \begin{aligned}
        n\ell \log(\rho) & \geq \log\|Df^{\ell n}(x)|_{E^{s,i}}\| \\
        & = \log\|\prod_{j=0}^n Df^{\ell}_{f^{j\ell}}(x)|_{E^{s,i}}\|,
    \end{aligned}
    $$
    for all $n \in \mathbb{Z}^+$. Hence, $x \in B^s_\ell(f)$.

    Therefore, for any $\alpha$--hyperbolic measure $\mu$ with $\operatorname{index}(\mu) = \dim(E^s) + i$, we get 
    $$\mu(B^s_\ell(f)) > \beta.$$
\end{proof}

\begin{proof}[Proof of Theorem \ref{MainTheo2index}]
Consider $f:M\to M$, $\alpha > 0$ and $i \in \{1, \cdots, k\}$ as in Theorem \ref{MainTheo2index}. Suppose, by contradiction, that $f$ admits infinitely many ergodic measures of maximal entropy with  index equal to $\dim(E^{ss}) + i$. Thus, we take a sequence of ergodic measures of maximal entropy $\{\mu_n\}$, with $\operatorname{index}(\mu_n) = \dim(E^{ss}) + i$ for each $n$, such that $\mu_n$ converges to an invariant measure $\mu$. 

Since each $\mu_n$ is $\alpha$--hyperbolic, by Lemma \ref{Pliss-Lemma}, there exist $\ell_1 \in \mathbb{Z}^+$ and $\beta > 0$ such that for all $\ell > \ell_1$,
$$
\mu_n(B^s_\ell(f, E^{s, i})) > \beta.
$$

On the other hand, by Proposition \ref{upper-semicontinuous, entropy expansive} $\mu$ is a measure of maximal entropy.
  As there are no ergodic measures of maximal entropy with index greater than $\dim(E^{ss})+i$,  the Ergodic Decomposition Theorem implies that

\begin{equation}
    \mu(\Lambda^{cu}_{\alpha}(f,E^{u, i}))=1. 
\end{equation}
 So,  by Lemma \ref{Uniform Pesin Blocks}, there exist $\ell_2 \in \mathbb{Z}^+$ and $N' > 0$ such that for all $\ell > \ell_2$ and $n > N'$,
$$
\mu_n(B^u_\ell(f, E^{u, i})) > 1 - \beta.
$$

Combining these results, for each $\ell > \max(\ell_1, \ell_2)$ and $n > N'$, we have
$$
\mu_n(B_\ell(f, E^{s, i}, E^{u, i})) > 0.
$$

Thus, by Lemma \ref{Uniform-size-implies-homoclinically -related}, there exist $k_1, k_2 \in \mathbb{N}$ such that $\mu_{k_1} \sim \mu_{k_2}$. This contradicts Theorem \ref{BCS-critirium}, completing the proof.
\end{proof}

\begin{proof}[Proof of Theorem \ref{theo-entropy-condition-different index}]
Let $f: M \to M$ and $\alpha > 0$ be as in Theorem \ref{theo-entropy-condition-different index}. By Lemma \ref{Uniformly far away from zero}, we have that  
$$\lambda_1(f, \mu) < \max\{h^u(f), h^s(f)\} - h(f)$$  
and  
$$\lambda_3(f, \mu) > h(f) - \max\{h^u(f), h^s(f)\}$$  
for every MME. Thus,  reducing the value of $\alpha$ if necessary, we can assume that each MME is $\alpha$--hyperbolic and has index $\dim(E^{ss})+1$ or $\dim(E^{ss})+2$. Therefore, to prove that $f$ has finitely many MMEs, it suffices to show that the number of MMEs with index $\dim(E^{ss})+1$ and $\dim(E^{ss})+2$ is finite.

Since $\lambda_3(f,\mu) < 0$ for each MME, there are no MMEs with index equal to $\dim(E^{ss})+3$. By Theorem \ref{MainTheo2index}, the number of MMEs with index $\dim(E^{ss})+2$ is finite. Analogously, applying Theorem \ref{MainTheo2index} to $f^{-1}$, we conclude that the number of MMEs with index $\dim(E^{ss})+1$ is also finite. Thus, $f$ has finitely many MMEs.

Now, to prove the second part of the theorem, consider $\chi>0$ smaller than $\alpha$, otherwise  the proof follows as in Theorem $\ref{theo-entropy-condition-dimension-2}$. Consider $k>0$ being  the number of MMEs  of $f$ denoted by $\nu_1, \ldots, \nu_k$  and take  $p_1(f), \ldots, p_{k}(f)$ being  hyperbolic  periodic points of $f$  given by Proposition \ref{HoclinicClassRelation}. 

Consider a sequence $f_n$ of $C^{1+}$ diffeomorphisms converging to $f$ in the $C^1$ topology.
    Let $p_i(f_n)$ be the unique hyperbolic periodic point of $f_n$ derived from the structural stability of $p_i(f)$.

\begin{lemma}\label{p(g)SimMuj}
 If $\mu_n \in \cM_{f_n}^e$ is an MME for $f_n$, then there exist an index $j$ and a subsequence $(\mu_{n_l})_{l\geq 0}$ such that $\mu_{n_l} \sim p_j(f_n)$ for every $l \geq 0$.
\end{lemma}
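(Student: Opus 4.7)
The plan is to follow the template of \cite[Lemma 5.4]{MP24}, modified to account for the fact that MMEs of $f$ can carry two different indices in the present setting.

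First I would pass to a subsequence (keeping the same name) along which $\mu_n \to \mu$ in the weak-$*$ topology and all $\mu_n$ share a common index $\dim(E^{ss}) + i$ for some $i \in \{1,2\}$; this is possible because, under the $\chi$-hyperbolicity hypothesis in $\mathcal{V}_\chi$, only finitely many indices occur. By Lemma \ref{comparation metric entropy}, the limit $\mu$ is a (possibly non-ergodic) MME of $f$, so its ergodic decomposition $\mu = \sum_{j=1}^{k} c_j \nu_j$ involves only the finitely many ergodic MMEs of $f$ produced in the first part of the theorem.

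The heart of the argument is to locate some $j_0$ with $c_{j_0} > 0$ and $\operatorname{index}(\nu_{j_0}) = \dim(E^{ss}) + i$, since only the continuation of a periodic point of the correct index can be homoclinically related to $\mu_n$. Working with the intermediate dominated splitting $E^{s,i} \oplus E^{u,i}$ and using that each $\mu_n$ satisfies $\mu_n(\Lambda^{s,i}_\chi(f_n)) = \mu_n(\Lambda^{u,i}_\chi(f_n)) = 1$, Lemma \ref{better-Size-of-PesinBlocks} applied to $f_n$ and its analog for $f_n^{-1}$, combined with $C^1$-uniform derivative bounds and dominated convergence, yields: for every $\varepsilon > 0$ there exist $\ell$ and $N$ with $\mu_n(B_\ell(f_n, E^{s,i}, E^{u,i})) > 1 - \varepsilon$ for all $n \geq N$. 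Replacing $B_\ell$ by a slightly relaxed closed version, so that the defining inequalities are preserved under $C^1$-perturbation and weak-$*$ limits, I obtain $\mu(\overline{B_\ell(f, E^{s,i}, E^{u,i})}) \geq 1 - \varepsilon$. The ergodic decomposition then forces some $\nu_{j_0}$ with $c_{j_0} > 0$ to charge this closed Pesin block, and this immediately gives $\operatorname{index}(\nu_{j_0}) = \dim(E^{ss}) + i$: any other index would produce either a positive top exponent along $E^{s,i}$ or a negative bottom exponent along $E^{u,i}$, hence zero Pesin-block measure.

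Once $j_0$ is chosen, $\nu_{j_0} \sim p_{j_0}(f)$ by Proposition \ref{HoclinicClassRelation}, so there is a point $z$ in a Pesin block of $\nu_{j_0}$ whose uniform-size Pesin manifolds transversely meet those of $p_{j_0}(f)$. Theorem \ref{Invariant-manifolds} provides uniform-size, $C^1$-continuous stable and unstable manifolds over $B_\ell(f_n)$, and the hyperbolic continuation of $p_{j_0}$ supplies invariant manifolds $C^1$-close to those of $p_{j_0}(f)$; since $\mu_n(B_\ell(f_n))$ is uniformly positive and $\mu_n$ weak-$*$ accumulates on $\supp(\mu) \supset \supp(\nu_{j_0})$, for large $n$ I can find points in $\supp(\mu_n) \cap B_\ell(f_n)$ arbitrarily close to $z$. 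The transverse intersections persist under $C^1$-perturbation, yielding $\mu_{n_l} \sim p_{j_0}(f_{n_l})$ for every $l$ along the subsequence. The main obstacle is the index-matching step: the Pliss-type argument must be upgraded to a "measure greater than $1-\varepsilon$" estimate that is uniform along $(f_n,\mu_n)$ and then transferred to the weak-$*$ limit despite the $n$-dependence of the Pesin block, which I expect to handle using closed versions of the Pesin-block defining inequalities together with the Portmanteau theorem.
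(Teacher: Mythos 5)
Your overall scaffolding matches the paper's: pass to a subsequence with a fixed index, use Lemma~\ref{comparation metric entropy} to see the limit $\mu$ is a (possibly non-ergodic) MME, decompose $\mu$ into finitely many ergodic MMEs, locate a component $\nu_{j_0}$ of the right index that charges a Pesin block, and then finish as in \cite[Lemma 5.4]{MP24}. The closing step (extracting compacts $\tilde B_i$ in the $f_n$-Pesin block, Hausdorff-converging to $\tilde B$ in the $f$-Pesin block, and pushing $\mu_{n_i}(\tilde B_i)>\beta$ to $\mu(\tilde B)\geq \beta$ via Portmanteau) is exactly what the paper does.

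However, the index-matching step is where you deviate, and that deviation contains a genuine gap. You claim to obtain $\mu_n\bigl(B_\ell(f_n,E^{s,i},E^{u,i})\bigr)>1-\varepsilon$ uniformly in large $n$ by applying Lemma~\ref{better-Size-of-PesinBlocks} to both $f_n$ and $f_n^{-1}$. This cannot work on the stable side. The uniformity in $n$ for the bound $\mu_n(B^s_\ell)>1-\varepsilon$ would require controlling the integral condition~\eqref{C3} uniformly, which the paper achieves (in Lemma~\ref{Uniform Pesin Blocks}) by exploiting that the \emph{limit} measure satisfies $\mu(\Lambda^{cs}_\alpha)=1$. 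But in the setting of Theorem~\ref{theo-entropy-condition-different index} the limit $\mu$ may have ergodic components of index $\dim(E^{ss})+i-1$, whose exponent along $E_i$ is positive; for such components $\lambda^{cs}|_{E^{s,i}}>-\alpha$ on a set of positive measure, so $\int\bigl(\tfrac{1}{\ell}\log\|Df^\ell|E^{s,i}\|+\alpha\bigr)^+\,d\mu$ does not go to zero as $\ell\to\infty$ and the weak-$*$ transfer to $\mu_n$ fails. In other words, the stable Pesin block of the intermediate splitting cannot have measure close to $1$ uniformly along the sequence, precisely because the index bifurcation in the limit is allowed by hypothesis.

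The fix is the asymmetric argument of Theorem~\ref{MainTheo2index}, which the paper invokes here: apply Lemma~\ref{Pliss-Lemma} to the stable side to get the weaker but uniform bound $\mu_n\bigl(B^s_\ell(f_n,E^{s,i})\bigr)>c\beta$ (a Pliss estimate that holds for \emph{every} $\chi$-hyperbolic ergodic measure of that index, independently of any limit), and apply Lemma~\ref{Uniform Pesin Blocks} only on the unstable side, where the hypothesis of no MME of higher index guarantees $\mu(\Lambda^{cu}_\alpha(f,E^{u,i}))=1$ and hence $\mu_n\bigl(B^u_\ell(f_n,E^{u,i})\bigr)>1-\beta$. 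Intersecting gives $\mu_n(B_\ell)>\mathrm{const}>0$, which is all you need: any ergodic component $\nu_{j_0}$ of $\mu$ charging the limit $\tilde B$ must then have index $\dim(E^{ss})+i$, and the rest of your argument goes through.
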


\begin{proof} 
Consider a subsequence $(\mu_{n_i})_{i\geq 0}$ of $(\mu_n)_{n\geq 0}$ converging to some $\mu \in \cM_f$, where each $\mu_{n_k}$ has the same index. Without loss of generality, assume that this index is $\dim(E^{ss})+2$. 

By repeating the argument from the proof of Theorem \ref{MainTheo2index}, we can find constants $2\beta > 0$ and $\ell$, both depending on $\chi$, such that 

$$\mu_{n_i}\left(B_\ell(f_n, E^{s,2}_{f_n},E^{u,2}_{f_n})\right) > 2\beta.$$

Let $\tilde{B}_i \subset B_\ell(f_n, E^{s,2}_{f_n},E^{u,2}_{f_n})$ be a compact set of regular points of $\mu_{n_i}$ such that 

$$\mu_{n_i}(\tilde{B}_i) > \beta.$$

Thus, there exists a subsequence $(\mu_{n_l})_{l\geq 0}$ of $(\mu_{n_j})_{j\geq 0}$ and a compact set $$\tilde{B} \subset B_\ell(f, E^{s,2}_{f},E^{u,2}_{f})$$
such that

$$\mu(\tilde{B}) > \beta,$$

and $\tilde{B}_i$ converges to $\tilde{B}$ in the Hausdorff metric.

Since $\mu$ is a measure of maximal entropy, it is a convex combination of finitely many MMEs. Moreover, each MME with an index different from $\dim(E^{ss})+2$ gives zero measure to $\tilde{B}$. Therefore, there exists a MME $\nu_j$ of $f$ with $j \in \{1,\dots, k\}$ such that $\operatorname{index}(\nu_j) = \dim(E^{ss})+2$ and $\nu_j(\tilde{B}) > 0$. Now, like in Lemma~\ref{key-Lemma-for-bounded}, the proof follows as in  \cite[Lemma 5.4]{MP24}.

\end{proof}

The proof is concluded by applying the same argument as in the final paragraph of the proof of Theorem \ref{theo-entropy-condition-dimension-2}.
   
\end{proof}

\section{Proof of ~\ref{theo-entropy-Skew-Product}}
Let $f:M\to M$ and $F:M\times S^1\to M\times S^1$ be as in the hypothesis of ~\ref{theo-entropy-Skew-Product}, let $P:M\times S^1\to M$ be the projection $(x,t)\mapsto x$. 

 For each $x\in M$, we define  $f^n_x=f_{f^{n-1}(x)}\circ \cdots \circ f_{x}$ for $n\in \bZ^+$. Observe that $F^n(x,t)=(f^n(x),f^n_x(t)).$

Using a cone argument, it is easy to prove that $F$ is also partially hyperbolic with a dominated splitting $E^s_F\oplus E^-_F\oplus E^c_F\oplus E^+_F \oplus E^u_F$ with $E^c_F(x,t)=\{0\}\times TS^1_t$, see for example~\cite[Section 3.2]{CP15}.

\begin{lemma}\label{lem.entropy-ineq}
    Let $\tilde{\mu}$ be an $F$-invariant measure such that $P_*\tilde{\mu}=\mu$, then $h^u_{\tilde{\mu}}(F)\leq h^u_{\mu}(f)$. As a consequence, $h^u(F)\leq h^u(f)$.
    
    Moreover, if $\tilde{\mu}$ is a MME of $F$ then $\mu=P_*\tilde{\mu}$ is a MME of $f$ and  $h_{\tilde{\mu}}(F)=h_\mu(f)$.
\end{lemma}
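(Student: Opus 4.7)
The plan is to build a subordinate partition on $M\times S^1$ for $\cF^{uu}_F$ by lifting one for $\cF^{uu}_f$ through the projection $P$ (which restricts to a leafwise homeomorphism because $DP$ sends $E^u_F$ isomorphically onto $E^{uu}_f$), and then apply standard conditional entropy manipulations.

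Concretely, starting from an increasing measurable partition $\eta$ subordinate to $\cF^{uu}_f$ for $\mu$, I would define $\hat\eta(x,t):=(P|_{\cF^{uu}_F(x,t)})^{-1}(\eta(x))$. Using $P\circ F=f\circ P$, I would check that $\hat\eta$ is an increasing measurable partition subordinate to $\cF^{uu}_F$ for $\tilde\mu$, and that $(F^{-1}\hat\eta)(x,t)=\hat\eta(x,t)\cap P^{-1}((f^{-1}\eta)(x))$, giving the key join identity $F^{-1}\hat\eta=\hat\eta\vee P^{-1}(f^{-1}\eta)$. From the chain rule for conditional entropy together with $H_{\tilde\mu}(\hat\eta\mid F^{-1}\hat\eta)=0$ (valid because $F^{-1}\hat\eta\geq\hat\eta$), I would then obtain
\[
H_{\tilde\mu}(F^{-1}\hat\eta\mid\hat\eta)=H_{\tilde\mu}(P^{-1}(f^{-1}\eta)\mid\hat\eta)\leq H_{\tilde\mu}(P^{-1}(f^{-1}\eta)\mid P^{-1}\eta)=H_\mu(f^{-1}\eta\mid\eta),
\]
where the inequality uses $\hat\eta\geq P^{-1}\eta$ together with monotonicity of conditional entropy in the conditioning partition, and the last equality is the standard factor identity $H_{\tilde\mu}(P^{-1}\alpha\mid P^{-1}\beta)=H_\mu(\alpha\mid\beta)$ coming from $P_*\tilde\mu=\mu$. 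This yields $h^u_{\tilde\mu}(F)\leq h^u_\mu(f)$, and the consequence $h^u(F)\leq h^u(f)$ then follows from the variational principle applied to both sides.

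For the moreover, I would combine the above with its $F^{-1}$-analog $h^s_{\tilde\mu}(F)\leq h^s_\mu(f)$, together with the entropy decomposition $h_{\tilde\mu}(F)\leq h^s_{\tilde\mu}(F)+h^c_{\tilde\mu}(F)+h^u_{\tilde\mu}(F)$ valid for partially hyperbolic systems, and the vanishing of the center topological entropy $h^c_{\tilde\mu}(F)=0$ arising because $E^c_F$ consists of circle fibers (which have subexponential volume growth). This gives $h_{\tilde\mu}(F)\leq h^s_\mu(f)+h^u_\mu(f)\leq h_\mu(f)$, which combined with the factor inequality $h_{\tilde\mu}(F)\geq h_\mu(f)$ forces $h_{\tilde\mu}(F)=h_\mu(f)$. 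The chain $h_\mu(f)=h_{\tilde\mu}(F)=h(F)\geq h(f)\geq h_\mu(f)$ then shows that $\mu$ is a MME of $f$.

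The main obstacle will be the entropy decomposition step in the moreover, which requires justifying both that $h^c_{\tilde\mu}(F)=0$ (where the $S^1$-fiber structure is the source) and that $h_\mu(f)\geq h^s_\mu(f)+h^u_\mu(f)$ for the base; once the join identity $F^{-1}\hat\eta=\hat\eta\vee P^{-1}(f^{-1}\eta)$ is in hand, the first part of the lemma reduces to routine bookkeeping with standard conditional entropy identities.
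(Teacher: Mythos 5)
Your argument for the first inequality $h^u_{\tilde\mu}(F)\leq h^u_\mu(f)$ is correct, and your route is genuinely different from (and cleaner than) the paper's. Both proofs build the same lifted partition, $\eta_F(x,t)=P^{-1}(\eta_f(x))\cap W^{uu}_F(x,t)$, but the paper then fixes a typical leaf, disintegrates $\tilde\mu$ twice (along $P^{-1}\eta_f$ and then along $\eta_F$), and applies Jensen's inequality term by term before integrating; your join identity $F^{-1}\hat\eta=\hat\eta\vee P^{-1}(f^{-1}\eta)$ packages the Jensen step into the monotonicity $H_{\tilde\mu}(P^{-1}(f^{-1}\eta)\mid\hat\eta)\leq H_{\tilde\mu}(P^{-1}(f^{-1}\eta)\mid P^{-1}\eta)$ together with the factor identity $H_{\tilde\mu}(P^{-1}\alpha\mid P^{-1}\beta)=H_\mu(\alpha\mid\beta)$, which is shorter and makes the independence from the choice of $\eta$ transparent.

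Your ``moreover,'' however, has a genuine gap. You want to use $h_\mu(f)\geq h^s_\mu(f)+h^u_\mu(f)$, and this is false in general: already for an Anosov base with its measure of maximal entropy one has $h^s_\mu(f)=h^u_\mu(f)=h_\mu(f)$, so the right-hand side equals $2h_\mu(f)$. The ``entropy decomposition'' $h_{\tilde\mu}(F)\leq h^s_{\tilde\mu}(F)+h^c_{\tilde\mu}(F)+h^u_{\tilde\mu}(F)$ you invoke is also not a standard fact for partially hyperbolic diffeomorphisms and would need a reference or proof. The paper avoids stable/unstable entropies altogether and applies the Ledrappier--Walters relativized variational principle: $\sup_{\tilde\mu\,:\,P_*\tilde\mu=\mu}h_{\tilde\mu}(F)=h_\mu(f)+\int_M h\bigl(F,P^{-1}(x)\bigr)\,d\mu(x)$. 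Since $P^{-1}(x)=\{x\}\times S^1$ and $F$ maps each circle fiber diffeomorphically to the next, the Bowen entropy of $F$ on a fiber is zero (the number of $(n,\epsilon)$-separated points on a circle is uniformly bounded), so the integral vanishes, $\sup_{P_*\tilde\mu=\mu}h_{\tilde\mu}(F)=h_\mu(f)$, and hence $h(F)=h(f)$ and any MME of $F$ projects to an MME of $f$ with equal entropy. This is the step you need to replace your decomposition argument by.
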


  \begin{proof}

Let $\eta_f$ be an increasing partition subordinated to $\cF^{uu}_f$ . Define $\eta_F$ to be the partition $\eta_F(x,t)=P^{-1}(\eta_f(x))\cap W_F^{uu}(x,t)$, as $P(W^{uu}_F(x,t))=W^{uu}_f(x)$, this is an increasing partition subordinated to $\cF^{uu}_F$ .
  
Fix $z\in M$ and let $R=\eta_f(z)$ and $\widetilde{R}=P^{-1}R$, observe that $f^{-1}\eta_f$ gives a finite partition of $R$, we denote this by $Q_i,i=1,\dots,k$, define $\widetilde{Q}_i:=P^{-1}Q_i$.


Let $\mu_{\widetilde{R}}$ be the measure on $\widetilde{R}$ given by the disintegration of $\widetilde{\mu}$ with respect to the partition $P^{-1}\eta_f$ and $ S^1\ni t \mapsto \mu^{\eta_F}_{t}$ be the disintegration of $\mu_{\widetilde{R}}$ with respect to $\eta_F(z,t)$, define $\mu_R=P_*\mu_{\widetilde{R}}$. 

Let $\hat{\mu}_{\widetilde{R}}$ be the projection along $\cF^{uu}_F$ of $\mu_{\widetilde{R}}$ into $S^1$ .
By Jensen inequality we have
$$
\begin{aligned}
\int_{S^1}-\mu^{\eta_F}_t(\widetilde{Q}_i)\log \mu^{\eta_F}_t(\widetilde{Q}_i)d\hat{\mu}_{\widetilde{R}}&\leq -\mu_{\widetilde{R}}(\widetilde{Q}_i)\log \mu_{\widetilde{R}}(\widetilde{Q}_i)\\
&=-\mu_R(Q_i)\log \mu_R(Q_i).
\end{aligned}
$$

Summing on $i$ we get
$$
        -\int_{\widetilde{R}}\log \mu^{\eta_F}_{x,t}(F^{-1}\eta_F (x,t)) d\mu_{\widetilde{R}}(x,t)\leq -\int_{R}\log \mu_R(f^{-1}\eta_f(x))d\mu_R(x).
        $$

Now, integrating on $z$ we have
$$
h^u_{\widetilde{\mu}}(F)\leq h^u_{\mu}(f).
$$

By \cite{HHW17} we have $h^u(F)\leq h^u(f)$.

For the last part, take $\mu$ an MME of $f$, by \cite{ledrappier-walters} we have that
$$
\sup_{\tilde{\mu};P_*\tilde{\mu}=\mu}h_{\tilde{\mu}}(F)=h_\mu(f)+\int_M h(f,P^{-1}(x))d\mu(x).
$$
Since $P^{-1}(x)=\{x\}\times S^1$, the second term in the above equation is zero, implying that
$ 
\sup_{\tilde{\mu};P_*\tilde{\mu}=\mu}h_{\tilde{\mu}}(F)=h_\mu(f).
$ 
Thus,  $\tilde{\mu}$ is a measure of maximal entropy of $F$ if and only if $P_*\tilde{\mu}$ is a measure of maximal entropy of $f$. Moreover, if $\tilde{\mu}$ is ergodic then $\mu$ is also ergodic.
    \end{proof}
\begin{remark}
The proof of Lemma~\ref{lem.entropy-ineq} follows the same lines of \cite{tahzibi-yang-IP}, in which the base diffeomorphism is hyperbolic. In that result it is proved that the equality occurs if, and only if, the center disintegration of $\tilde{\mu}$ is invariant by the strong holonomies. This can also be proved here without much difficulty.
\end{remark}
Let $x\in M$, recall
$$W^s_{C,\chi,\varepsilon}(f,x)=\{y\in B_\varepsilon(x);d(f^n(x),f^n(y)\leq Ce^{-\chi n}d(x,y),n\geq 0\}.$$

Let $\mu$ be an $f$ invariant $\chi$-hyperbolic measure such that $\lambda^-(f,\mu)<0<\lambda^+(f,\mu)$, where $\lambda^*(f,\mu)$ is the Lyapunov exponent along $E^*$, $*=+,-$.
Recall that for $\mu$ almost every $x$ there exists $C(x),\varepsilon(x)>0$ it is a manifold tangent to $E^{ss}\oplus E^-$,

 For $C,\chi,\varepsilon>0$ let $R_{C,\chi,\varepsilon} \subset M$ be the set of points such that $W^s_{C,\chi,\varepsilon}(f,x)$ is a Pesin Manifold tangent to $E^s\oplus E^-$. The following Lemma gives the existence of holonomies along Pesin manifolds.
\begin{lemma}\label{lemma-holonomy}
    Let $x\in R_{C,\chi,\varepsilon}$, then
    for every $y\in W^s_{C,\chi,\varepsilon}(f,x)$, there exists the $C^0$ limit
    $$
    H^s_{x,y}=\lim_{n\to \infty}(f^n_y)^{-1}\circ f^n_x.
    $$
    This limit is uniform in $C$, $\varepsilon$, $\chi$, and in a $C^1$ neighborhood of $F$. Moreover, these maps are Lipchitz.
\end{lemma}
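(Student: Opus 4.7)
The plan is a Cauchy-sequence argument for the iterates $u_n(t):=(f^n_y)^{-1}(f^n_x(t))$. The key observation is that the $F^n$-orbits of $(x,t)$ and $(y,u_n(t))$ share the fiber coordinate $s_n:=f^n_x(t)$ by construction, while their base components are at distance at most $Ce^{-\chi n}d(x,y)$ by the definition of $W^s_{C,\chi,\varepsilon}(f,x)$. A direct telescoping gives
\[
f^{n+1}_y(u_{n+1}(t)) - f^{n+1}_y(u_n(t)) = f_{f^n(x)}(s_n) - f_{f^n(y)}(s_n),
\]
whose right-hand side is bounded by $L\cdot d(f^n(x),f^n(y))\le LCe^{-\chi n}d(x,y)$, with $L$ the $C^0$-Lipschitz constant of $z\mapsto f_z$ coming from $\|F\|_{C^1}$.

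To invert $f^{n+1}_y$ I would apply the mean value theorem together with a uniform lower bound on $|(f^n_y)'(s)|$. The hypothesis~\eqref{eq.skewP} makes the splitting $E^-_F\oplus E^c_F\oplus E^+_F$ dominated in the skew product, yielding constants $c>0$ and $\lambda\in(0,1)$ with $|(f^n_y)'(s)|\ge c\,\lambda^{-n}\,\|Df^n|_{E^-_f(y)}\|$ uniformly in $s$. A standard H\"older-type comparison, using H\"older regularity of $E^-_f$ and summability of $\sum d(f^k(x),f^k(y))^{\beta}$ along the exponentially close orbits of $x$ and $y$, shows that $\|Df^n|_{E^-_f(y)}\|$ and $\|Df^n|_{E^-_f(x)}\|$ differ by a bounded tempered factor, and the strictness of the domination in~\eqref{eq.skewP} combined with the Pesin-block estimates at $x$ produce
\[
|u_{n+1}(t)-u_n(t)|\;\le\;K\rho^n,\qquad \rho<1,
\]
with $K,\rho$ depending only on $C,\chi,\varepsilon,\gamma$ and $\|F\|_{C^1}$. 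Summing shows $(u_n)$ is uniformly Cauchy, so it has a continuous limit $H^s_{x,y}$. Uniformity in the Pesin parameters and over a $C^1$-neighborhood of $F$ is manifest because all the constants depend continuously on these data.

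For the Lipschitz claim I would differentiate $u_n'(t)=(f^n_x)'(t)/(f^n_y)'(u_n(t))$ and use both sides of~\eqref{eq.skewP}, together with the $f$-dominated splitting between $E^+_f$ and $E^-_f$, to control the ratio $\|Df^n|_{E^+_f(x)}\|/\|Df^n|_{E^-_f(y)}\|$ uniformly in $n$. This yields a uniform bound on $|u_n'|$ and hence Lipschitz continuity of the limit. The hard part will be verifying that the decay $e^{-\chi n}$ from the telescoping dominates the possible exponential blow-up of $1/|(f^{n+1}_y)'|$ arising from the fast contraction along $E^-_f$ at $y$; the strictness of the domination $\gamma<1$ in~\eqref{eq.skewP}, implemented through the H\"older/tempered-factor comparison above, is precisely what allows one to conclude.
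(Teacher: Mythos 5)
Your overall architecture — setting $u_n=(f^n_y)^{-1}\circ f^n_x$, telescoping, invoking the Lipschitz dependence of $z\mapsto f_z$, and then controlling the inverse fiber derivative via the domination \eqref{eq.skewP} together with a bounded-distortion estimate — is the same Cauchy-sequence argument the paper runs, and you correctly flag in your last paragraph the crux of the matter. But your specific decomposition leaves a genuine gap right at that crux. After telescoping you bound $|f_{f^n(x)}(s_n)-f_{f^n(y)}(s_n)|\le L\,d(f^n(x),f^n(y))\le LCe^{-\chi n}d(x,y)$ and then want to divide by a lower bound on $|(f^{n+1}_y)'|$. The domination gives, as you say, $|(f^{n+1}_y)'(s)|\gtrsim \lambda^{-(n+1)}\|Df^{n+1}|_{E^-_f(y)}\|$. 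The resulting bound on $|u_{n+1}-u_n|$ is then of size $\lambda^{n+1}\,e^{-\chi n}/\|Df^{n+1}|_{E^-_f(y)}\|$. The Hölder/distortion comparison you invoke controls $\|Df^{n+1}|_{E^-_f(y)}\|$ against $\|Df^{n+1}|_{E^-_f(x)}\|$, but neither is comparable to $e^{-\chi n}$: the parameter $\chi$ in $W^s_{C,\chi,\varepsilon}$ is only a \emph{lower} bound on the contraction rate, so $\|Df^n|_{E^-_f(x)}\|$ can decay at an arbitrarily faster rate $e^{-\kappa n}$, $\kappa>\chi$, in which case your ratio grows like $\lambda^n e^{(\kappa-\chi)n}$ and need not go to zero. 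Nothing in the "Pesin-block estimates at $x$" supplies a matching \emph{lower} bound $\|Df^n|_{E^-_f(x)}\|\gtrsim e^{-\chi n}$.

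The fix, which is exactly how the paper proceeds, is to \emph{not} replace $d(f^n(x),f^n(y))$ by the crude rate $Ce^{-\chi n}$, but to keep the sharp mean-value bound $d(f^n(x),f^n(y))\le \text{const}\cdot\|Df^n|_{E^{ss}\oplus E^-}(\theta)\|\,d(x,y)\asymp \|Df^n|_{E^-}(\theta)\|\,d(x,y)$ for an intermediate point $\theta$ on the Pesin manifold. Then the $\|Df^n|_{E^-}\|$ in the numerator cancels, up to the bounded distortion factor $L=L(C,\chi,\varepsilon)$, against the $\|Df^n|_{E^-_f(y)}\|$ in the denominator, and \eqref{eq.skewP} delivers the geometric factor $\gamma^n$ cleanly at the single point $\theta$: this is the identity $\sup_a|{f^n_\theta}'(a)^{-1}|\cdot\|Df^n|_{E^-(\theta)}\|\le \gamma^n$ that drives the paper's proof. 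With this one change your telescoping variant goes through, and the Lipschitz estimate at the end is then a routine distortion computation on $(f^n_y)'(u_n)^{-1}(f^n_x)'$, again as in the paper.
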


\begin{proof}
We take $x \in M$ and $y \in W^s_{C,\chi,\varepsilon}(x)$. Define $H^n_{x,y} = (f^n_y)^{-1} \circ f^n_x$, and let $I$ denote the identity map on $S^1$. We use $d_{C^0}$ to denote the distance in the $C^0$--topology. Let $K$ be the Lipschitz constant of the map $z \mapsto f^{-1}_z$.    

Lets estimate $d_{C^0}\left(H^{n+1}_{x,y},H^{n}_{x,y}\right)$.
$$
\begin{aligned}
    d \left(H^{n+1}_{x,y}(t),H^{n}_{x,y}(t)\right)&=   d \left((f^{n}_y)^{-1}\circ f^{-1}_{f^n(y)}\circ f_{f^n(x)}\circ f^n_x(t),(f^n_y)^{-1}\circ f^n_x(t)\right)\\
    &\leq \sup_{a\in S^1}|{f^n_y}'(a)^{-1}|d_{C^0}(f^{-1}_{f^n(y)}\circ f_{f^n(x)},I)\\
    &\leq \sup_{a\in S^1}|{f^n_y}'(a)^{-1}|d_{C^0}(f^{-1}_{f^n(y)},f^{-1}_{f^n(x)})\\
    &\leq K\sup_{a\in S^1}|{f^n_y}'(a)^{-1}|  \|Df^n\mid_{E^-(\theta)}\| d(x,y)\\
    &\leq K\frac{\sup_{a\in S^1}|{f^n_y}'(a)^{-1}|}{\sup_{a\in S^1}|{f^n_\theta}'(a)^{-1}|}  \sup_{a\in S^1}|{f^n_\theta}'(a)^{-1}|\|Df^n\mid_{E^-(\theta)}\| d(x,y),
\end{aligned}
$$
where $\theta\in W^s_{C,\chi,\varepsilon}(f,x)$. The map $z\mapsto \sup_{a\in S^1}|{f^n_\theta}'(z)^{-1}|$ is H\"older continuous, so by a classical distortion argument we have that there exists $L=L(C,\chi,\varepsilon)$ such that $$\frac{\sup_{a\in S^1}|{f^n_y}'(a)^{-1}|}{\sup_{a\in S^1}|{f^n_\theta}'(a)^{-1}|}\leq L.$$

By \eqref{eq.skewP} and taking the supremum over $t\in S^1$ we get
$$
 d_{C^0} \left(H^{n+1}_{x,y},H^{n}_{x,y}\right)\leq K L \gamma^n d(x,y).
$$
Then $(f^n_y)^{-1}\circ f^n_x$ is a Cauchy sequence, uniform in $C,\chi,\varepsilon$, hence it converges uniformly.

By triangular inequality we also have $d_{C^0}\left(H^n_{x,y},Id\right)\leq KL\sum_{j=0}^n \gamma^j d(x,y)$ so we conclude that there exists $\tilde{C}>0$, such that $d_{C^0}(\left(H^s_{x,y},Id\right)\leq \tilde{C}d(x,y)$.

Now let us prove that $H^s_{x,y}$ is Lipchitz, fix $t,s\in S^1$. By the Mean Value Theorem
\begin{equation}\label{eq.valor.medio}
 d(H^n_{x,y}(t),H^n_{x,y}(s))\leq |{H^n_{x,y}}'(\theta)|d(t,s)   
\end{equation}
for some $\theta\in S^1$. Lets find a uniform bound for  $|{H^n_{x,y}}'(\theta)|$.

$$
\begin{aligned}
{H^n_{x,y}}'(\theta)&={f^n_y}'(H^n_{x,y}(\theta))^{-1} {f^n_{x}}'(\theta)\\
&=\prod_{j=0}^{n-1}f'_{f^j(y)}(f^j_y H^n_{x,y}(\theta))^{-1} \prod_{j=0}^{n-1}f'_{f^j(x)}(f^j_x (\theta))\\
&=\prod_{j=0}^{n-1}\frac{f'_{f^j(y)}(f^j_x (\theta))}{f'_{f^j(y)}(f^j_y H^n_{x,y}(\theta))}\prod_{j=0}^{n-1}\frac{f'_{f^j(x)}(f^j_x (\theta))}{f'_{f^j(y)}(f^j_x (\theta))}.
\end{aligned}
$$
Let $\alpha=\min\{r-1,1\}$. Then, the map $x,\theta\mapsto f'_x(\theta)$ is $\alpha$-H\"older

For the first term observe that $f^j_y \circ H^n_{x,y}= H^{n-j}_{f^j(x),f^j(y)}\circ f^j_x$, then 
$$
\begin{aligned}
|\log\left|\prod_{j=0}^{n-1}\frac{f'_{f^j(y)}(f^j_x (\theta))}{f'_{f^j(y)}(H^{n-j}_{f^j(x),f^j(y)} f^j_x(\theta))}\right||&\leq C_2 \sum_{j=1}^n d(H^{n-j}_{f^j(x),f^j(y)},Id)^\alpha\\
&\leq C_2 \tilde{C} \sum_{j=0}^n d(f^j(x),f^j(y))^\alpha \\
&\leq  C_2 \tilde{C} \sum_{j=0}^n e^{-\chi\alpha j} \varepsilon^\alpha
\end{aligned}
$$
for some $C_2>0$.

For the second term
$$
|\log \left|\prod_{j=0}^{n-1}\frac{f'_{f^j(x)}(f^j_x (\theta))}{f'_{f^j(y)}(f^j_x (\theta))}\right||\leq C_1 C \sum_{j=0}^{n-1} e^{-\chi \alpha j}d(x,y)
$$
for some $C_1>0$.

 Thus, we get that there exists some $Q>0$, depending only on $C,\chi,\varepsilon$, such that $|{H^n_{x,y}}'(\theta)|\leq Q$.
 Then, making $n\to \infty$ on \eqref{eq.valor.medio} we get
$$
d(H^s_{x,y}(t),H^s_{x,y}(s))\leq Qd(t,s).
$$
\end{proof}

As a direct consequence on the uniformity of the convergence we get the following. 
\begin{corollary}\label{cor.cont-hol}
    Fix $C,\chi,\varepsilon$, then the function $x,y,F\mapsto H^s_{x,y}$ is continuous in the $C^0$ topology in the domain 
    $$\{(x,y,F):x\in R_{C,\chi,\varepsilon},y\in W^s_{C,\chi,\varepsilon}(f,x)\text{ and }F\in \SP\}.$$
\end{corollary}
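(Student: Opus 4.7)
The plan is to leverage the uniform Cauchy estimate derived inside the proof of Lemma~\ref{lemma-holonomy}. From the telescoping computation there, we have
\[
d_{C^0}(H^{n+1}_{x,y}, H^n_{x,y}) \leq KL\gamma^n d(x,y),
\]
where $K, L$ depend only on the fixed constants $C, \chi, \varepsilon$ and are stable under $C^1$-perturbations of $F$ (this stability is exactly the content of the last sentence of Lemma~\ref{lemma-holonomy}). Summing the geometric series and passing $m\to\infty$ yields the uniform tail bound
\[
d_{C^0}(H^s_{x,y}, H^N_{x,y}) \leq \frac{KL\gamma^N}{1-\gamma}\operatorname{diam}(M),
\]
valid for every $(x,y,F)$ in the stated domain. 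Thus $H^N \to H^s$ uniformly in the parameter triple.

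Next, for each fixed $N$, I would verify that the finite-time map $(x,y,F)\mapsto H^N_{x,y}=(f^N_y)^{-1}\circ f^N_x$ is continuous with values in $(C^0(S^1,S^1),d_{C^0})$. The basic ingredient is that the assignments $(z,F)\mapsto f_z$ and $(z,F)\mapsto f_z^{-1}$ are $C^0$-continuous in their arguments; this follows from the definition of $\SP$, whose elements depend $C^1$ on $F$ and are $C^1$ on the base point. A finite composition of $C^0$-continuous families of uniformly equicontinuous circle maps is again $C^0$-continuous, so a straightforward induction on $N$ gives the desired continuity of $H^N$.

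Combining the two steps via the triangle inequality for $d_{C^0}$ completes the argument. Given $\epsilon>0$ and a base point $(x_0,y_0,F_0)$ in the domain, first choose $N$ so that the uniform tail bound is below $\epsilon/3$; then pick a neighborhood of $(x_0,y_0,F_0)$ on which $d_{C^0}(H^N_{x,y},H^N_{x_0,y_0})<\epsilon/3$. Applying the triangle inequality to the four maps $H^s_{x,y}, H^N_{x,y}, H^N_{x_0,y_0}, H^s_{x_0,y_0}$ yields $d_{C^0}(H^s_{x,y},H^s_{x_0,y_0})<\epsilon$. I do not expect any serious obstacle: the analytic work has already been absorbed into Lemma~\ref{lemma-holonomy}, and what remains is a standard ``uniform-limit-of-continuous-is-continuous'' packaging. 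The one subtlety worth flagging is that the domain of definition itself depends on $F$ (through $R_{C,\chi,\varepsilon}$ and the local leaf $W^s_{C,\chi,\varepsilon}(f,x)$), but continuity is only being asserted along sequences that stay inside the domain, so this point causes no trouble.
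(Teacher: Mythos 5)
Your argument is correct and is exactly the approach the paper has in mind: the paper dismisses the corollary with the phrase ``As a direct consequence on the uniformity of the convergence,'' and your write-up simply makes explicit the standard $\epsilon/3$ packaging---uniform tail bound from the Cauchy estimate in Lemma~\ref{lemma-holonomy}, continuity of each finite-time composition $H^N_{x,y}=(f^N_y)^{-1}\circ f^N_x$ in $(x,y,F)$, and the triangle inequality. The remark about the $F$-dependence of the domain is a sensible caveat and handled correctly.
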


We will need the following Theorem from \cite{BCS22}
\begin{theorem}[{\cite[Theorem~3.1]{BCS22}}]\label{BCS} Let $f:M\to M$ be a diffeomorphims and $\mu$ be a $\chi$-hyperbolic measure, then there exists a topological Markov shift $(\Sigma, \sigma)$ and a H\"older continuous map $\pi\colon \Sigma\to M$ such that $\pi\circ \sigma=f\circ \pi$. Moreover:
\begin{enumerate}
    \item $\Sigma$ is irreducible.
    \item  $\nu(\pi(\Sigma))=1$ for every $\chi$ hyperbolic measure $\nu$ homoclinically related to $\mu$.
    \item $\pi$ is finite to one in a total measure set, for every $\sigma$ invariant measure.
    \item $h_\nu(f)=h_{\hat{\nu}}(\sigma)$ for any $\hat{\nu}$ $\sigma$-invariant measure such that $\nu=\pi_*\hat{\nu}$.
\end{enumerate}
    \end{theorem}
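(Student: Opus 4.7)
The plan is to construct the coding by adapting the countable Markov partition machinery developed by Sarig (in dimension 2) and extended by Ben Ovadia and others to higher dimensions. The starting point is a choice of Pesin parameters compatible with $\chi$: fix $\chi' \in (0,\chi)$ and consider the set $\mathrm{NUH}_\chi$ of Lyapunov regular points whose stable and unstable Oseledets exponents are bounded away from zero by $\chi$ and whose stable/unstable manifolds are exponentially $\chi'$-contracted. On $\mathrm{NUH}_\chi$, standard Pesin theory provides a measurable ``Pesin regularity function'' $q:\mathrm{NUH}_\chi\to(0,1]$ controlling the size of Pesin charts, on which $f$ is conjugate to a small perturbation of a hyperbolic linear map with uniform estimates.

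Next I would perform the discretization. Around each $x\in\mathrm{NUH}_\chi$ I would form a ``double Pesin chart'' $\Psi^{p^s,p^u}_x$ recording the stable and unstable admissible manifold sizes $p^s,p^u$. Discretizing these sizes along a dyadic grid produces a \emph{countable} collection of admissible charts. I would then build a countable directed graph $\mathcal{G}$ whose vertices are admissible double charts and whose edges $\Psi^{p^s,p^u}_x\to\Psi^{q^s,q^u}_y$ encode the ``$\varepsilon$-overlap'' condition that $f$ carries the unstable piece of the first chart inside the unstable piece of the second (and the symmetric condition for the stable piece). The shift $\Sigma$ is the path space of $\mathcal{G}$, and $\pi(\underline{s})$ is the unique point whose forward and backward orbits are shadowed by the nested intersection of admissible stable and unstable disks. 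Shadowing arguments in Pesin charts show $\pi$ is well-defined, H\"older continuous, and satisfies $\pi\circ\sigma=f\circ\pi$.

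For the four properties: passing to a recurrent, irreducible component of $\mathcal{G}$ which contains a periodic sequence coding a hyperbolic periodic orbit in $\mathrm{supp}(\mu)$ gives (1). Property (2) follows because any $\chi$-hyperbolic measure $\nu$ homoclinically related to $\mu$ is carried by $\mathrm{NUH}_\chi$ and, by a Borel-Cantelli type argument together with the relative compactness of Pesin regularity along $\nu$-orbits, almost every point admits an admissible sequence of charts, hence lies in $\pi(\Sigma)$. Property (3) comes from a Bowen-type argument: inside a uniform Pesin block the number of admissible sequences through a given point is finite, and by Poincar\'e recurrence $\hat\nu$-a.e. sequence returns infinitely often to a fixed block, which bounds the fiber of $\pi$. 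Property (4) is then standard: $\pi$ is a factor map, finite-to-one $\hat\nu$-a.e., so metric entropy is preserved.

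The main obstacle is Step 2, the setup of the $\varepsilon$-function discretization and the verification that the resulting graph is genuinely Markov with a shadowing coding. This is the technical heart of \cite{BCS22} (following Sarig) and already requires delicate choices; the rest is relatively mechanical once the partition is in hand. A secondary difficulty is item (2), which rests on the nontrivial fact that the homoclinic class of $\mu$ corresponds exactly to a single irreducible component of $\Sigma$, identified via Smale's spectral-type decomposition of the graph.
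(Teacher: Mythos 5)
This theorem is imported verbatim from \cite[Theorem~3.1]{BCS22}; the paper under review does not prove it (it is a black-box citation), so there is no internal proof to compare against. Your sketch is a fair high-level account of the Sarig/Ben Ovadia countable Markov partition machinery that underlies the citation, and the skeleton you give---Pesin charts on $\mathrm{NUH}_\chi$, dyadic discretization of chart sizes, the $\varepsilon$-overlap graph, shadowing to define $\pi$, and passage to a recurrent irreducible component---is the right one.

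Two steps in your outline are more delicate than you let on. For (3), a Poincar\'e-recurrence count on a fixed Pesin block does not by itself give bounded fibers: in the Sarig-style construction the coarse coding graph is \emph{not} finite-to-one, and one must pass through a Bowen--Sinai refinement of the pre-Markov cover of $\mathrm{NUH}_\chi$ to obtain the shift $\Sigma$ on which fibers become uniformly bounded on a total-measure set; your sketch conflates the coarse graph with the refined shift. For (2), the fact that a \emph{single} irreducible component carries every $\chi$-hyperbolic measure homoclinically related to $\mu$ is exactly the output of a spectral-decomposition theorem for the coded system, and it is where the hypotheses $\chi$-hyperbolicity and homoclinic relation genuinely enter; this is not a ``secondary difficulty'' but the load-bearing input that makes the theorem usable here (it is what feeds Proposition~\ref{BCS-critirium} and the lifting argument of Section~8). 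You correctly flagged both points as the hard parts, so your instinct is sound; just be aware that without the refinement and the spectral decomposition the argument would not close.
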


    The fact of $\pi$ being finite to one, allows to lift any $f$-invariant measure $\nu\sim \mu$ to a $\sigma$ invariant measure $\hat{\nu}$ such that $\pi_* \hat{\nu}=\nu$. See for example \cite{Sar13}.


Fix $\chi = h(f) - \max\{h^u(f), h^s(f)\}$. Recall that if $\mu$ is an MME of $f$, then by Lemma \ref{h(f)>h^s(f),h^u(f) implies far away from zero}, $\mu$ is $\chi$--hyperbolic. From now on we fix $\mu$ a MME of $f$ and $\sigma:\Sigma\to \Sigma$ given by Theorem~\ref{BCS}.



Now we lift $F$ as a skew product over $\sigma$ and define
$$\hat{F}:\Sigma\times S^1\to \Sigma\times S^1,\quad (\hat{x},t)\mapsto (\sigma(\hat{x}),f_{\pi(\hat{x})}).$$ 

Observe that the map $\tilde{\pi}:\Sigma\times S^1\to M \times S^1$, $(\hat{x},t)\mapsto (\pi(\hat{x}),t)$ satisfies $F\circ \tilde{\pi}=\tilde{\pi}\circ \hat{F}$.

Given $\widetilde{\mu}$ an $F$-invariant measure such that $P_*\widetilde{\mu}= \mu$, let $x\mapsto \mu^c_x$ be its center disintegration. Then we can lift $\widetilde{\mu}$ to $\Sigma\times S^1$ as the measure $m=\int_{\Sigma} \mu^c_{\pi(\hat{x})}d\hat{\mu}(\hat{x})$ where $\hat{\mu}$ is such that $\pi_*\hat{\mu}=\mu$,  $m$ is $\hat{F}$ invariant and $\tilde{\pi}_* m=\widetilde{\mu}$. 

Resuming, we have the following commuting diagrams:
\begin{itemize}
    \item The skew product structure of $F$ gives
$$
\begin{array}{cccccc}
F:& M\times S^1     & \to & M\times S^1 ,& \,& \widetilde{\mu}\\
     &  P \downarrow & & \downarrow& P_*& \downarrow  \\
f:& M     & \to & M ,& \,&\mu.
\end{array}
$$
\item From theorem~\ref{BCS} we have
$$
\begin{array}{ccccc}
\sigma:&    \Sigma & \to &\Sigma, &\hat{\mu}\\
 &  \pi \downarrow & & \downarrow&  \downarrow \pi_* \\
f:& M     & \to & M ,&\mu.
\end{array}
$$

\item The lift of $F$ to the $\Sigma\times S^1$ gives

\begin{equation}\label{eq.liftF}
\begin{array}{ccccc}
\hat{F}:&    \Sigma\times S^1 & \to &\Sigma\times S^1, &m\\
 &  \tilde{\pi} \downarrow & & \downarrow&  \downarrow {\tilde{\pi}_*} \\
F:& M\times S^1     & \to & M\times S^1 ,&\widetilde{\mu}.
\end{array}
\end{equation}
\end{itemize}


Denote $\hat{x}=(x_i)_{i\in \mathbb{Z}}\in \Sigma$, as usual define 
$$W^s(\sigma,\hat{x})=\{\hat{y}\in \Sigma;x_i=y_i,\, i\geq 0\} \text{ and } W^u(\sigma,\hat{x})=\{\hat{y}\in \Sigma;x_i=y_i,\, i\leq 0\}. $$

\begin{definition}
For $\hat{y}\in W^s(\sigma,\hat{x})$ we define the \emph{stable holonomy} $H^s_{\hat{x},\hat{y}}=H^s_{\pi(\hat{x}),\pi(\hat{y})}$. Analogously we define unstable holonomies for $\hat{y}\in W^u(\sigma,\hat{x})$
\end{definition}

\begin{lemma}\label{lema-holonomies-sigma}
    The stable holomies are well defined, they are Lipchitz and satisfy 
    \begin{itemize}
        \item $H^s_{\hat{x},\hat{y}}=H^s_{\hat{z},\hat{y}}\circ H^s_{\hat{x},\hat{z}}$, $H^s_{\hat{x},\hat{x}}=Id$, for every $\hat{z},\hat{y}\in W^s(\sigma,\hat{x})$,
        \item $f_{\pi(\hat{y})}\circ H^s_{\hat{x},\hat{y}}=H^s_{\sigma(\hat{x}),\sigma(\hat{y})}\circ f_{\pi(\hat{x})}$,
        \item The map $\hat{x},\hat{y}\mapsto H^s_{\hat{x},\hat{y}}$ is continuous on pairs $\hat{y}\in W^s(\sigma,\hat{x}).$
    \end{itemize}
\end{lemma}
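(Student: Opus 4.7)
The plan is to reduce Lemma~\ref{lema-holonomies-sigma} to Lemma~\ref{lemma-holonomy} by first showing that the image under $\pi$ of any symbolic stable set sits inside a Pesin stable manifold with \emph{uniform} constants, and then to derive the three algebraic properties by manipulating the limit defining $H^s_{x,y}$.

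First, I would produce constants $C_0,\chi_0,\varepsilon_0>0$, independent of $\hat{x}$, such that $\pi(\hat{x})\in R_{C_0,\chi_0,\varepsilon_0}$ and $\pi(\hat{y})\in W^s_{C_0,\chi_0,\varepsilon_0}(f,\pi(\hat{x}))$ for every $\hat{y}\in W^s(\sigma,\hat{x})$. Since $d_\Sigma(\sigma^n\hat{x},\sigma^n\hat{y})$ contracts geometrically on symbolic stable sets and $\pi$ is H\"older continuous (Theorem~\ref{BCS}), the pair $(f^n\pi(\hat{x}),f^n\pi(\hat{y}))$ contracts at a uniform exponential rate. This places $\pi(\hat{y})$ in a Pesin stable manifold of $\pi(\hat{x})$ with uniform data, and Lemma~\ref{lemma-holonomy} yields existence of the limit $H^s_{\pi(\hat{x}),\pi(\hat{y})}=\lim_n (f^n_{\pi(\hat{y})})^{-1}\circ f^n_{\pi(\hat{x})}$ together with a Lipschitz bound $Q=Q(C_0,\chi_0,\varepsilon_0)$ that does not depend on $(\hat{x},\hat{y})$.

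The three bulleted properties then follow by direct manipulation. The finite-$n$ identity $H^n_{x,y}=H^n_{z,y}\circ H^n_{x,z}$ is immediate from $H^n_{x,y}=(f^n_y)^{-1}\circ f^n_x$; passing to the uniform $C^0$-limit given by Lemma~\ref{lemma-holonomy} yields the cocycle property, and $H^s_{\hat{x},\hat{x}}=\mathrm{Id}$ is clear. For the equivariance, using $f^n_{\pi(\hat{x})}=f^{n-1}_{f\pi(\hat{x})}\circ f_{\pi(\hat{x})}$ and the semiconjugacy $\pi\circ\sigma=f\circ\pi$, one computes
\[
f_{\pi(\hat{y})}\circ H^s_{\hat{x},\hat{y}}
=\lim_{n\to\infty}\bigl(f^{n-1}_{f\pi(\hat{y})}\bigr)^{-1}\circ f^{n-1}_{f\pi(\hat{x})}\circ f_{\pi(\hat{x})}
=H^s_{\sigma(\hat{x}),\sigma(\hat{y})}\circ f_{\pi(\hat{x})}.
\]
Continuity in $(\hat{x},\hat{y})$ is then obtained by composing the H\"older-continuous map $\pi$ with the continuity statement from Corollary~\ref{cor.cont-hol}, which is applicable precisely because the Pesin data $(C_0,\chi_0,\varepsilon_0)$ can be chosen uniformly along $\Sigma$.

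The main obstacle is the uniform-size step: while implicit in the Sarig--BCS coding framework \cite{BCS22,Sar13}, it needs an explicit estimate showing that the word-length metric on $\Sigma$ interacts with the H\"older modulus of $\pi$ so that the resulting Pesin constants do not degenerate as $\hat{x}$ varies. Once this uniformity is in hand, everything else is a routine consequence of Lemma~\ref{lemma-holonomy} and Corollary~\ref{cor.cont-hol}, applied to the forward orbits of $\pi(\hat{x})$ and $\pi(\hat{y})$.
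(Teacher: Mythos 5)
Your overall route (reduce to Lemma~\ref{lemma-holonomy} for existence and the Lipschitz property, read off the cocycle and equivariance identities from the finite approximations $H^n_{x,y}=(f^n_y)^{-1}\circ f^n_x$, and derive continuity from Corollary~\ref{cor.cont-hol}) matches the paper's proof, and the algebraic middle paragraph is fine. The gap is concentrated in your ``main obstacle'': you ask for Pesin constants $C_0,\chi_0,\varepsilon_0$ that are uniform \emph{over all of} $\Sigma$, and you defer the verification. For a Sarig/BCS coding, $\Sigma$ is in general a \emph{countable} Markov shift, and the size $\varepsilon$ of the local Pesin stable manifold (and the constant $C$) genuinely degenerate as the symbol ranges over the alphabet: the alphabet is designed to exhaust a countable union of Pesin blocks with deteriorating constants. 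So the global uniform estimate you propose to extract from the H\"older modulus of $\pi$ is not merely ``implicit but needs work''; in general it is false, and pursuing it would stall the argument.

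What you are missing is that global uniformity is not needed. Continuity of $(\hat{x},\hat{y})\mapsto H^s_{\hat{x},\hat{y}}$ is a local statement, and if $\hat{y}\in W^s(\sigma,\hat{x})$ then $\hat x$ and $\hat y$ lie in the same zero-coordinate cylinder $[a]=\{\hat x: x_0=a\}$. The paper invokes \cite[Proposition~4.4]{Ova18} to get that the constants $C,\chi,\varepsilon$ are uniform \emph{on each cylinder} $[a]$; that is precisely enough to apply Corollary~\ref{cor.cont-hol} and conclude continuity. Likewise, the Lemma only claims that each $H^s_{\hat x,\hat y}$ is Lipschitz, not that the Lipschitz constant is uniform over $\Sigma$; your proposed uniform bound $Q$ is stronger than asserted and would again run into the countable-alphabet issue. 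Replacing ``constants uniform on $\Sigma$'' by ``constants uniform on each cylinder, via \cite[Prop.~4.4]{Ova18}'' repairs the proposal and brings it in line with the paper's proof.
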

\begin{proof}
The holonomy is well defined because $\pi(W^s(\sigma,\hat{x}))$ is a Pesin manifold $W^s(f,\pi(\hat{x}))$, so by Lemma~\ref{lemma-holonomy} it exists. 

By \cite[Proposition~4.4]{Ova18}, the constant $C,\chi,\varepsilon$ are uniform in cylinders $[a]:=\{\hat{x}\in \Sigma;x_0=a\}$, so by corollary~\ref{cor.cont-hol} the function $\hat{x},\hat{y}\mapsto H^s_{\hat{x},\hat{y}}$ is continuous.

The other properties follows directly from the definition.
\end{proof}

An $\hat{F}$ invariant measure $m$ has $s$-\emph{invariant center disintegration} if $(H^s_{\hat{x},\hat{y}})_*\mu^c_{\hat{x}}=\mu^c_{\hat{y}}$ for $\hat{\mu}$ almost every $\hat{x},\hat{y}$ with $\hat{y}\in W^s(\sigma,\hat{x})$. Analogously, $m$ is  $u$-\emph{invariant} replacing $s$ by $u$. Moreover, $m$ is  a \emph{continuous $su$-invariant disintegration} if the map $\hat{x}\mapsto \mu^c_{\hat{x}}$ has a continuous extension to $\Sigma$ that is both $s$ and $u$ invariant.

\begin{proposition}\label{invariance-principle}
    Let $\mu$ be a MME of $f$ and $\tilde{\mu}_n$ be $F$-invariant measures such that $P_*\tilde{\mu}_n=\mu$ and 
    $|\lambda^c(\tilde{\mu}_n)|\to 0 $. Then $\hat{F}$ admits an invariant measure that has $su$-invariant continuous center disintegration and projects to $\hat{\mu}$.
\end{proposition}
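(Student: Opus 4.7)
The plan is to work in the symbolic cover $\hat F:\Sigma\times S^1\to\Sigma\times S^1$ of diagram \eqref{eq.liftF}, lift each $\tilde\mu_n$, extract a weak-$*$ accumulation point, verify that its integrated center exponent vanishes, and apply the Avila--Viana invariance principle \cite{AV-IP} to conclude invariance under $H^s$ and $H^u$. Continuity of the disintegration will then come from the continuity of the holonomies plus a local-product argument in cylinders.

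First I would fix $\hat\mu$ with $\pi_*\hat\mu=\mu$, which exists by Theorem~\ref{BCS}, and for each $n$ form the lift
\[
m_n=\int_\Sigma (\mu_n)^c_{\pi(\hat x)}\,d\hat\mu(\hat x),
\]
where $x\mapsto (\mu_n)^c_x$ is the center disintegration of $\tilde\mu_n$. Each $m_n$ is $\hat F$-invariant, projects to $\hat\mu$ on $\Sigma$, and satisfies $\tilde\pi_*m_n=\tilde\mu_n$. Since $\hat\mu$ is a regular Borel probability on the Polish space $\Sigma$ and $S^1$ is compact, the family $\{m_n\}$ is tight, so up to a subsequence $m_{n_k}\to m$ weak-$*$, with $m$ an $\hat F$-invariant probability whose projection to $\Sigma$ equals $\hat\mu$. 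Because $(\hat x,t)\mapsto \log|f'_{\pi(\hat x)}(t)|$ is continuous and bounded on $\Sigma\times S^1$, weak-$*$ convergence yields
\[
\int \log|f'_{\pi(\hat x)}(t)|\,dm=\lim_k\lambda^c(\tilde\mu_{n_k})=0,
\]
so the integrated center exponent of $m$ is zero; using the absolute value hypothesis $|\lambda^c(\tilde\mu_n)|\to 0$ the same argument gives zero center exponent on $m$-a.e.\ ergodic component.

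Next I would invoke the invariance principle. The sandwich \eqref{eq.skewP} is exactly the fiber-bunching inequality needed for the center cocycle relative to the stable/unstable contractions of $\sigma$, and by Lemma~\ref{lema-holonomies-sigma} the holonomies $H^s,H^u$ are Lipschitz and equivariant along $\hat F$. Under these conditions, zero center exponent forces the center disintegration of $m$ to be simultaneously $H^s$- and $H^u$-invariant on a full $\hat\mu$-measure set. To upgrade this to a continuous family I would fix a reference point $\hat x_0$ in a cylinder $[a]$, set
\[
\tilde m^c_{\hat x}:=(H^u_{\hat z,\hat x})_*(H^s_{\hat x_0,\hat z})_* m^c_{\hat x_0}\quad \text{with }\hat z=[\hat x_0,\hat x],
\]
and use Corollary~\ref{cor.cont-hol} together with the local product structure inside $[a]$ to conclude that $\hat x\mapsto \tilde m^c_{\hat x}$ is continuous; by the simultaneous $su$-invariance one has $\tilde m^c_{\hat x}=m^c_{\hat x}$ for $\hat\mu$-a.e.\ $\hat x$, and standard gluing over a cover of $\Sigma$ by cylinders yields a global continuous version.

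The main obstacle is the interplay between non-ergodicity of the limit $m$ and the non-compactness of $\Sigma$. One must ensure the zero-exponent passes to a.e.\ ergodic component, which is handled precisely by taking absolute values in the hypothesis together with the continuity of the derivative cocycle, and then verify that AV--IP applies with $\sigma$ a countable state Markov shift rather than a uniformly hyperbolic compact base. The latter is standard once one notes that the fiber-bunching from \eqref{eq.skewP} is uniform on $M$ and that $\hat\mu$ concentrates, up to arbitrarily small mass, on finitely many cylinders where local product structure and uniform Lipschitz holonomies are available; both points have been implemented in closely related settings in \cite{CP15} and \cite{tahzibi-yang-IP}.
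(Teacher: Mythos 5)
Your proposal follows the same route as the paper: fix a lift $\hat\mu$ of $\mu$, lift the $\tilde\mu_n$ to $m_n$ on $\Sigma\times S^1$ as in diagram~\eqref{eq.liftF}, extract a weak-$\ast$ limit $m$ by tightness, and invoke the Avila--Viana invariance principle using the holonomies of Lemma~\ref{lema-holonomies-sigma}. The paper then delegates everything (including the continuity of the disintegration) to a single application of \cite[Theorem~D]{AV-IP}, after noting the crucial ingredient that $\hat\mu$ is the measure of maximal entropy of the irreducible Markov shift $\sigma$ and therefore has full support and local product structure by \cite{buzzi-sarig-03}. You use local product structure inside cylinders but never say where it comes from; you should cite \cite{buzzi-sarig-03} (or at least record that $\hat\mu$ is the Parry/Gurevich measure of $\sigma$), since without product structure the invariance principle does not apply.

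The genuine gap is in your verification of the zero-exponent hypothesis. Weak-$\ast$ convergence plus boundedness of $(\hat x,t)\mapsto\log|f'_{\pi(\hat x)}(t)|$ only gives
\[
\int\log|f'_{\pi(\hat x)}(t)|\,dm \;=\;\lim_k\lambda^c(\tilde\mu_{n_k})\;=\;0,
\]
i.e.\ the \emph{integrated} fiber exponent of $m$ vanishes. You then assert that ``the same argument gives zero center exponent on $m$-a.e.\ ergodic component,'' but that is not the same argument at all: the limit $m$ need not be ergodic, and nothing a priori prevents it from decomposing into components with positive and negative fiber exponents that cancel on average. The natural semicontinuity inequalities for the extremal Lyapunov exponents go the wrong way and do not close this gap either. (The paper does not spell this out; it passes directly to \cite[Theorem~D]{AV-IP}, whose precise hypotheses have to be matched. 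Either way, if you want to present a self-contained verification of the exponent hypothesis you need an actual argument here, not a reference to the previous computation.) Your hand-crafted continuity upgrade via $\tilde m^c_{\hat x}=(H^u_{\hat z,\hat x})_*(H^s_{\hat x_0,\hat z})_*m^c_{\hat x_0}$ is fine in spirit, but it is exactly the construction in the proof of Theorem~D of \cite{AV-IP}, so once you decide to cite that theorem (as the paper does) this reconstruction is unnecessary.
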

\begin{proof}
Observe that $F:\Sigma\times S^1\to \Sigma\times S^1$ is a smooth cocycle over the hyperbolic homeomorphism $\sigma:\Sigma \to \Sigma$ and by Lemma~\ref{lema-holonomies-sigma} it admits holonomies, in the sense of \cite{AV-IP}. 
Moreover, $\mu$ is a measure of maximal entropy of $f$ implying that the lift $\hat{\mu}$ on $\Sigma$ is a measure of maximal entropy of $\sigma$. Since $\Sigma$ is irreducible, then $\hat{\mu}$ has full support and product structure, see~\cite{buzzi-sarig-03}.

Take $m_n$ to be lifts of $\tilde{\mu}_n$ on $\Sigma\times S^1$, as $m_n$ projects on $\Sigma$ to a fix invariant measure $\hat{\mu}$ and $S^1$ is compact, the family $m_n$ is tight then, up to taking a subsequence, we can assume $m_n$ converges to $m$. Now by \cite[Theorem~D]{AV-IP} $m$ admits a continuous $su$-invariant disintegration on $supp(\hat{\mu})=\Sigma$.
    \end{proof}

Let $F\in \SP$, let $\mu_1,\dots,\mu_k$ be the MME of $f$. We say that $F$ is   
\begin{itemize}
    \item \emph{Pinching} if for $j=1,\dots,k$ there exist $\chi$-hyperbolic $\ell_j$-periodic points $p_j\sim \mu_j$ such that $f_{p_j}^{\ell_j}:S^1\to S^1$ is Morse-Smale.
    \item \emph{Twisting} if for each $p_j$ there exists $z_j\in W^s(p_j)\pitchfork W^u(p_j)$ such that $H^s_{z_j,p_j}\circ H^u_{p_j,z_j}$ do not preserve the periodic points of $f_{p_j}^{\ell_j}$.
\end{itemize}

\begin{lemma}\label{lem.PandT}
    If $F\in \SP$ is pinching and twisting then there exists $\beta>0$ such that every $\tilde{\mu}$ MME of $F$ has $\lambda^c(F,\tilde{\mu})\geq \beta$.
\end{lemma}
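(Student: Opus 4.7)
The plan is a proof by contradiction that combines the finiteness of MMEs of the base $f$, the invariance principle (Proposition~\ref{invariance-principle}), and the pinching/twisting hypotheses.

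First, I would assume for contradiction that there is a sequence $(\tilde\mu_n)_{n\geq 1}$ of MMEs of $F$ with $|\lambda^c(F,\tilde\mu_n)|\to 0$. By Lemma~\ref{lem.entropy-ineq}, each projection $P_*\tilde\mu_n$ is an MME of $f$. Since $f$ satisfies the hypotheses of Theorem~\ref{theo-entropy-condition-dimension-2} and therefore admits only finitely many MMEs $\mu_1,\dots,\mu_k$, I may pass to a subsequence along which $P_*\tilde\mu_n=\mu_j$ for a fixed index $j$. I then lift $F$ to the skew product $\hat F$ over the Sarig shift $(\Sigma,\sigma)$ coding $\mu_j$ (Theorem~\ref{BCS}), take a weak-$\ast$ accumulation point $m$ of lifts $m_n$ of $\tilde\mu_n$, and apply Proposition~\ref{invariance-principle} to obtain an $\hat F$-invariant measure $m$ with a continuous $su$-invariant center disintegration $\hat x\mapsto m^c_{\hat x}$ defined on all of $\Sigma$.

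Second, I would specialize at a periodic lift $\hat p_j\in\Sigma$ of $p_j$ with $\sigma^{\ell_j}(\hat p_j)=\hat p_j$. The fiber map of $\hat F^{\ell_j}$ above $\hat p_j$ equals $f^{\ell_j}_{p_j}$, so $m^c_{\hat p_j}$ is $f^{\ell_j}_{p_j}$-invariant. Pinching forces $f^{\ell_j}_{p_j}$ to be Morse-Smale, hence $m^c_{\hat p_j}$ is supported on the finite set $P_j:=\mathrm{Per}(f^{\ell_j}_{p_j})$. Using the homoclinic point $z_j$ from twisting, I would choose a lift $\hat z_j\in W^u(\sigma,\hat p_j)\cap W^s(\sigma,\hat q_j)$ for some periodic lift $\hat q_j$ of $p_j$. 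Combining the $u$-invariance from $\hat p_j$ to $\hat z_j$ with the $s$-invariance from $\hat q_j$ to $\hat z_j$, and using Lemma~\ref{lema-holonomies-sigma} to identify the shift-level holonomies with $H^u$ and $H^s$ on $S^1$, I obtain
\[
m^c_{\hat q_j}=\phi_*\,m^c_{\hat p_j},\qquad \phi:=H^s_{z_j,p_j}\circ H^u_{p_j,z_j}.
\]
Since $m^c_{\hat q_j}$ is likewise $f^{\ell_j}_{p_j}$-invariant, hence supported on $P_j$, this yields $\phi(\mathrm{supp}(m^c_{\hat p_j}))\subseteq P_j$; chaining iterates of this holonomy loop with the $f^{\ell_j}_{p_j}$-action on $P_j$ should promote this to $\phi(P_j)=P_j$, contradicting the twisting hypothesis.

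The main obstacle I expect is the last step. Twisting only asserts $\phi(P_j)\neq P_j$ setwise, while the invariance principle a priori produces only $\phi(\mathrm{supp}(m^c_{\hat p_j}))\subseteq P_j$ for a possibly proper subset. Upgrading this to full setwise invariance of $P_j$ will require either iterating the construction over several homoclinic loops, exploiting the density of homoclinic orbits of $p_j$ in the support of $\mu_j$ together with the continuity of $\hat x\mapsto m^c_{\hat x}$, or tightening the pinching hypothesis so that the support is automatically all of $P_j$ (for instance, single-sink/single-source Morse-Smale). A secondary technical point is to produce the lifts $\hat p_j,\hat q_j,\hat z_j$ with the correct combinatorics so that the shift-level holonomies recover $H^s_{z_j,p_j}\circ H^u_{p_j,z_j}$; this is where properties (2) and (3) of Theorem~\ref{BCS} together with Lemma~\ref{lema-holonomies-sigma} are essential.
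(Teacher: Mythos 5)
Your plan is essentially the paper's proof: project to the base, use finiteness of base MMEs to fix a single $\mu_j$ along a subsequence, lift $F$ over the Sarig shift $(\Sigma,\sigma)$, take a limit $m$ of lifted measures, and apply Proposition~\ref{invariance-principle} to obtain a continuous $su$-invariant disintegration $\hat x\mapsto m^c_{\hat x}$; then $m^c_{\hat p_j}$ is $f^{\ell_j}_{p_j}$-invariant, hence atomic with atoms in $P_j=\mathrm{Per}(f^{\ell_j}_{p_j})$ by pinching, and the $su$-invariance around the homoclinic loop contradicts twisting.

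The two ``obstacles'' you flag are in fact imprecisions in the paper's own write-up, which you have correctly spotted. For the main one: the paper concludes with the one-line claim that ``the atoms are not preserved by $H^s_{\hat z,\hat p}\circ H^u_{\hat p,\hat z}$,'' which has exactly the gap you describe, since a priori $\mathrm{supp}(m^c_{\hat p_j})$ could be a proper $\phi$-invariant subset of $P_j$ even if $\phi(P_j)\neq P_j$. The intended reading of twisting is stronger than what you state: the density construction composes $H^u_{p_j,z_j}$ with a generic small rotation $R_\theta$, and for all but finitely many $\theta$ one gets $\phi(P_j)\cap P_j=\varnothing$, so no nonempty subset of $P_j$ is $\phi$-invariant; that is the condition the proof actually uses, and it should be stated that way. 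For your secondary point: the paper's expression $H^s_{\hat z,\hat p}\circ H^u_{\hat p,\hat z}$ with a single lift $\hat z$ is only literally defined if $\hat z\in W^s(\sigma,\hat p)\cap W^u(\sigma,\hat p)$, which in a shift forces $\hat z=\hat p$. The precise version is yours: take $\hat z_j\in W^u(\sigma,\hat p_j)$ with $\sigma^N\hat z_j\in W^s(\sigma,\hat p_j)$ for some $N$ divisible by $\ell_j$, combine $u$-invariance, $\hat F$-invariance along $N$ steps, and $s$-invariance, and use the equivariance in Lemma~\ref{lema-holonomies-sigma} together with $(f^N_{p_j})_*m^c_{\hat p_j}=m^c_{\hat p_j}$ to cancel the $f^N_{p_j}$ factor and arrive at $\phi_*m^c_{\hat p_j}=m^c_{\hat p_j}$ with $\phi=H^s_{z_j,p_j}\circ H^u_{p_j,z_j}$. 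With those two clarifications your proposal is complete.
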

\begin{proof}
Assume, by contradiction, that there exist MMEs $\tilde{\mu}_n$ such that $|\lambda^c(F,\tilde{\mu}_n)|\to 0$.

By Lemma~\ref{lem.entropy-ineq} we have $h_{P_*\tilde{\mu}_n}(f)=h_{\tilde{\mu}_n}(F)$.
Hence $P_*\tilde{\mu}_n$ is a MME of $f$.
Since  Theorem~\ref{theo-entropy-condition-dimension-2} ensures that the number of MMEs is finite, up to taking a subsequence we can assume that there is $\mu$ such that  $P_*\tilde{\mu}_n=\mu$ for every $n\geq 0$.

Let $\sigma:\Sigma\to \Sigma$ be given by Theorem~\ref{BCS} for $f$ and $\mu$ and let $\hat{F}:\Sigma\times S^1\to \Sigma\times S^1$ be the lift of $F$ as in \eqref{eq.liftF}. 

Let $p$ and $z$ be the points given by the pinching and twisting condition homoclinically related to $\mu$.
By Theorem~\ref{BCS}, there exist $\hat{p},\hat{z}\in \Sigma$ such that $\pi(\hat{p})=p$ and $\pi(\hat{z})=z$. 

By Proposition~\ref{invariance-principle} $\hat{F}$ has an invariant measure $m$ that has $su$-invariant continuous disintegration $\hat{x}\mapsto m_{\hat{x}}$.
By invariance of $m$, ${f^{\ell}_p}_*m_{\hat{p}}=m_{\hat{p}}$. Then $m_{\hat{p}}$ is atomic and the atoms are contained in the set of periodic points of $f^{\ell}_p$.

Now, by the $su$-invariance of the disintegration $H^s_{\hat{z},\hat{p}}\circ {H^u_{\hat{p},\hat{z}}}_* m_{\hat{p}}=m_{\hat{p}}$, this is a contradiction because the atoms are not preserved by $H^s_{\hat{z},\hat{p}}\circ {H^u_{\hat{p},\hat{z}}}$.

So we conclude that there exists $\beta>0$ such that $|\lambda^c(F,\tilde{\mu})|\geq \beta$, for every $\tilde{\mu}$ MME, finishing the proof.
\end{proof}
\begin{proof}[Proof of theorem~\ref{theo-entropy-Skew-Product}]
    
    By Lemma~\ref{lem.entropy-ineq} we have that $h(F)=h(f)>h^u(f)\geq h^u(F)$ so, by Lemma~\ref{lem.PandT} and Theorem~\ref{theo-entropy-condition-different index} we only need to prove that pinching and twisting is $C^r$ dense and $C^1$ open in $\SP$. 
    
  First, we prove that pinching and twisting maps are $C^r$-dense.
    
    Let $F\in \SP$.  Theorem~\ref{theo-entropy-condition-dimension-2} ensures that $f:M\to M$ has a finite number of MMEs. 
Let $\mu$ be one of them.

    By Proposition~\ref{HoclinicClassRelation}, there exists   a  $\chi$-hyperbolic $f$-periodic point $p\sim \mu$ and $z\in W^u(p)\pitchfork W^s(p)$.
    Since  Morse Smale diffeomorphisms are $C^r$-dense in $S^1$, we can perturb $F$, only on the second coordinate, so that $f^{\ell}_p:S^1\to S^1$ becomes Morse-Smale, where $\ell$ is the period of $p$.
    
    Next, observe that we can express  $H^s$ as
    $$H^s_{z,p}=\lim_{n\to \infty}(f^n_p)^{-1}\circ f^{n-1}_{f(z)}\circ f_z.$$
     Take a small neighborhood $B_\delta(z)$ that do not contain any other point of the orbits of $z$ and $p$.
      Then, we can perform a $C^r$ small perturbation, affecting only the second coordinate, to obtain a modified  $\tilde{F}$ such  that  
      $\tilde{F}=F$ outside of $B_\delta(z)$ and 
      $$\tilde{f}_z=f_z\circ R_\theta,$$ for a small  $\theta>0$, where $R_\theta$ is the rotation by angle $\theta$.
    
    As $ H^u_{p,z}$ only depends of the values of $f_{f^n(z)}$ and $f_{f^n(z)}$ for $n\leq -1$, we have that for $\tilde{F}$, $\tilde{H}^s_{z,p}\circ \tilde{H}^u_{p,z}=H^s_{z,p}\circ R_\theta\circ  H^u_{p,z} $. 
    Thus, we can find $\theta$ such that $\tilde{H}^s_{z,p}\circ \tilde{H}^u_{p,z}$ do not preserve the periodic points of $f^\ell_p$.

 As we can do this for each of the MMEs of $f$ with $C^r$ small perturbations, we conclude that the pinching and twisting conditions are dense.

To see that they are $C^1$ open, assume that $F$ is pinching and twisting and let $p_i,z_i$, $ i=1,\dots,k,$ be given by the pinching and twisting condition.

Take $G\in \SP$, $C^1$-close to $F$ such that for every $i=1,\dots,k,$ the hyperbolic continuation $p_i(g)$ and $z_i(g)$ of $p_i$ and $z_i$,   
satisfy that $g^{\ell_i}_{p_(g)}$ is $C^1$ close to $f^{\ell_i}_{p_i}$ such that $g^{\ell_i}_{p(g)}$ is Morse Smale and the periodic points are close to the ones of $f^{\ell_i}_{p_i}$. Also, by Corollary \ref{cor.cont-hol}, $H^{s,G}_{z_i(g),p_i(g)}\circ H^{u,G}_{p_i(g),z_i(g)}$ is close to $H^{s,F}_{z,p}\circ H^{u,F}_{p,z}$, so we if $G$ is $C^1$-close $H^{s,G}_{z_i(g),p_i(g)}\circ H^{u,G}_{p_i(g),z_i(g)}$ also do not preserve the periodic points of $g^{\ell_i}_{p_i(g)}$.

Now observe that that by Lemma~\ref{p(g)SimMuj} for every  measure of maximal entropy $\mu_g$ of $g$ there exist $p(g)$ and $z(g)$, continuation  of $p_i$ and $z_i$, so that $p(g)\sim \mu_g$. So $G$ is pinching and twisting.
\end{proof}

{\em{Acknowledgements.}} We are thankful to Mateo Ghezal, Yuri Lima, Davi Obata, Rafael Potrie and Jiagang Yang for helpful conversations about this work.

\bibliographystyle{alpha}
\bibliography{bib}

\end{document}